\documentclass[11pt, twoside]{article}
\usepackage[a4paper,width=150mm,top=25mm,bottom=30mm]{geometry}
\usepackage[all,cmtip]{xy}
\usepackage{comment}
\usepackage[pdftex]{pict2e}
\usepackage{amsmath,amssymb,amscd,amsthm}
\usepackage{a4wide,pstricks}
\usepackage{pdfsync}
\usepackage{fancyhdr}
\usepackage{makebox, bigstrut}
\pagestyle{fancy}
\setlength{\headheight}{20pt}
\fancyhead{}
\fancyhead[LO,RE]{\thepage}
\fancyhead[LE]{\leftmark}
\fancyhead[RO]{\rightmark}
\fancyfoot{}
\setlength\parindent{0 pt}
\usepackage[T1]{fontenc}
\usepackage[utf8]{inputenc} 
\usepackage[english]{babel} 
\usepackage{setspace}
\usepackage{bbm}
\usepackage{leftindex}
\usepackage{tasks}
\usepackage{dsfont} 
\usepackage{amsthm}
\usepackage{amsmath}
\usepackage{amssymb}
\usepackage{mathrsfs}
\usepackage{mathtools}
\usepackage{enumitem}
\usepackage{hyperref}
\usepackage{amssymb}
\usepackage{calligra}
\usepackage{ytableau}
\usepackage{marginnote}
\usepackage{cleveref}
\usepackage{babel}
\usepackage{bm}
\usepackage{tikz-cd}
\usepackage{faktor}
\usepackage{stmaryrd}
\renewcommand{\phi}{\varphi}

\let\emptyset\varnothing

\newcommand{\C}{\mathbb{C}}

\newcommand{\N}{\mathbb{N}}

\newcommand{\Q}{{\mathbb Q}}

\newcommand{\T}{\mathbb{T}}
\newcommand{\Z}{\mathbb{Z}}
\newcommand{\F}{\mathbb{F}}

\xyoption{all}
\input{xypic}

\renewcommand{\phi}{\varphi}

\let\emptyset\varnothing

\newcommand{\Irm}{{\rm I}}
\newcommand{\Rrm}{{\rm R}}
\newcommand{\pr}{{\rm pr}}
\newcommand{\car}{\mathfrak{car}}

\usepackage{lmodern}
\xyoption{all}
\input{xypic}


\theoremstyle{plain}
\numberwithin{equation}{subsection}
\crefformat{section}{\S#2#1#3} 
\crefformat{subsection}{\S#2#1#3}
\crefformat{subsubsection}{\S#2#1#3}

\makeindex

\newtheorem{teorema}{Theorem}[subsection]
\newtheorem{prop}[teorema]{Proposition}

\newtheorem{conjecture}[teorema]{Conjecture}
\newtheorem{lemma}[teorema]{Lemma}

\theoremstyle{remark}
\newtheorem{oss}[teorema]{Remark}
\newtheorem{remark}[teorema]{Remark}
\newtheorem{esempio}[teorema]{Example}

\newtheorem{assum}[teorema]{Assumption}

\theoremstyle{definition}
\newtheorem{definizione}[teorema]{Definition}

\newcounter{margin}
{\end{itshape}  \bigskip}

\usepackage{tikz}
\DeclareMathOperator{\lcm}{lcm}
\DeclareMathOperator{\Perv}{Perv}
\DeclareMathOperator{\X}{{\bf X}}
\DeclareMathOperator{\Hom}{Hom}

\DeclareMathOperator{\ord}{ord}
\DeclareMathOperator{\Res}{Res}
\DeclareMathOperator{\Ker}{Ker}

\DeclareMathOperator{\Supp}{supp}
\DeclareMathOperator{\Gl}{GL}
\DeclareMathOperator{\End}{End}
\DeclareMathOperator{\Imm}{Im}

\DeclareMathOperator{\Aut}{Aut}

\DeclareMathOperator{\tr}{Tr}

\DeclareMathOperator{\spec}{Spec}
\DeclareMathOperator{\Sl}{SL}

\DeclareMathOperator{\Rep}{Rep}

\DeclareMathOperator{\Ind}{Ind}

\DeclareMathOperator{\Coeff}{Coeff}

\DeclareMathOperator{\PGl}{PGL}

\DeclareMathOperator{\Gr}{Gr}
\DeclareMathOperator{\IC}{IC}

\begin{document}

\title{${\rm PGL}_n(\C)$-character stacks and Langlands duality over finite fields}

\author{ Emmanuel Letellier
\\ {\it Universit\'e Paris Cit\'e, IMJ-PRG CNRS UMR 7586}
\\{\tt emmanuel.letellier@imj-prg.fr } \and Tommaso Scognamiglio \\ {\it
  University of Bologna, IndAm} \\ {\tt
  scognamiglio@altamatematica.it} }
\maketitle
\pagestyle{myheadings}

\begin{abstract}
In this paper we study the mixed Poincar\'e polynomials of generic $\PGl_n(\C)$-character stacks with coefficients in some local systems arising from the conjugacy classes of $\PGl_n(\C)$ which have non-connected stabilizers. We give a conjectural formula that we prove to be true under the Euler specialization. We then prove that these conjectured formulas  interpolate the  structure coefficients of the two following based rings:

$$
\left(\mathcal{C}(\PGl_n(\F_q)),Loc(\PGl_n),*\right),\hspace{1cm}\left(\mathcal{C}(\Sl_n(\F_q)), CS(\Sl_n),\cdot\right)
$$
where for a group $H$, $\mathcal{C}(H)$ denotes the space of complex valued class functions on $H$, $Loc(\PGl_n)$ denotes the basis of characteristic functions of intermediate extensions of equivariant local systems on conjugacy classes of $\PGl_n$ and $CS(\Sl_n)$ the basis of characteristic functions of Lusztig's character-sheaves on $\Sl_n$. Our result reminds us of a non-abelian Fourier transform.
\end{abstract}

\tableofcontents

\section{Introduction}

For a compact and connected Riemann surface $X$ and a connected reductive group $G$, the study of the moduli spaces of $G$-local systems (i.e. character varieties and character stacks) play a central role in many areas in mathematics. Via non-abelian Hodge theory these spaces are related to the corresponding moduli spaces of $G$-Higgs bundles over $X$. 
Here, we are interested in the case where $X$ is a punctured compact Riemann surface (i.e. of the form $X=Z \setminus D$, where $D$ is a finite subset of a compact Riemann surface $Z$). We study local systems on $X$ that have local monodromies around the points of $D$ in (the Zariski closure) of some fixed conjugacy classes of $G$. 
\bigskip

Most of the results reported in the literature concern the case where $G=\Gl_n$ or $\Sl_n$ with a genericity assumption on local monodromies at punctures. The study of the cohomology of these spaces has been initiated by Hitchin \cite{Hitchin} and pursued by many authors (see for instance \cite{Boden} \cite{Garcia}) in some more complicated cases using geometrical methods. Hausel and Rodriguez-Villegas introduced a new method by counting points of these spaces over finite fields to get information on the cohomology (using the Weil conjectures proved by Deligne). They first treated the case of $\Gl_n$-character varieties with $D=\{pt\}$ and central local monodromy around that point \cite{HRV}. They obtained a conjectural formula for the mixed Poincar\'e polynomial based on a natural deformation of their counting point formulas. Their work has been extended by Hausel-Letellier-Villegas to an arbitrary number of punctures with semisimple local monodromies \cite{HA}. They obtained a conjectural formula for the mixed Hodge polynomials in terms of Macdonald polynomials. These conjectures have been verified by Mellit and Schiffman for the usual Poincar\'e polynomial (ignoring the weight filtration) \cite{Mellit}\cite{schiffmann}. They obtained the Poincar\'e polynomial of character varieties/space of stable Higgs bundles by counting $\Gl_n$-Higgs bundles over finite fields (using the fact that the space of stable Higgs bundles is cohomologically pure). These formulas has been further extended by Letellier \cite{L} and Ballandras \cite{ballandras} to the case of arbitrary generic local monodromies (not necessarily semisimple). Finally, Scognamiglio \cite{scognamiglio2} generalized Hausel-Letellier-Villegas conjectural formulas of the mixed Hodge polynomials to the non-generic case and he proved his conjecture under the Euler specialization (that gives the number of points over finite fields).
\bigskip

In the case $G=\Sl_n$ (in the one-puncture case with central local monodoromy), Mereb \cite{Mereb} computed the number of points of $\Sl_n$-character varieties over finite fields using a similar approach to that of Hausel-Villegas in the $\Gl_n$-case, i.e. by using the character table of $\Sl_n$ over a finite field to count points. However due to the complexity of the character table of $\Sl_n$ (compared to that of $\Gl_n$), it is still not known how to deform conjecturally his formula to get a reasonable conjectural formula for the mixed Hodge polynomial. More recently, Chaudouard \cite{Chaudouard} obtained a counting formula for the number of $\Sl_n$-Higgs bundles over finite fields (in the non-parabolic case).
\bigskip

Giannini, Kamgarpour, Nam and Whitbread \cite{KNWG} study the case of an arbitrary connected reductive group $G$ and they gave a formula for the counting of points over finite fields of $G$-character varieties with local monodromies in generic semisimple regular conjugacy classes. However it is not clear how to guess a natural conjectural formula for the mixed Poincar\'e polynomial from their formula. The problem with groups other than $\Gl_n$ is that there is no combinatorial description of the character table of the finite group $G(\mathbb{F}_q)$ and it is thus not clear what should replace the Macdonald symmetric functions for instance. There are Macdonald symmetric functions in type $B$ and $C$ but their relationship with the character table of $G(\mathbb{F}_q)$ is not established and certainly not straightforward. 
\bigskip

The counting of points of $G$-character varieties over finite fields use the character table of $G(\mathbb{F}_q)$. In fact, the counting of points coincides naturally with the structure coefficients of the center of the group algebra of $G(\mathbb{F}_q)$, or equivalently with the structure coefficient of the space $\mathcal{C}(G(\mathbb{F}_q))$ of complex valued class functions on $G(\mathbb{F}_q)$ equipped with the convolution product and the basis of the characteristic functions of the conjugacy classes. When $G=\Gl_n$ it was observed \cite{HA1}\cite{L} that the "pure part" of the conjectured formula of the mixed Hodge polynomial of the generic character varieties coincides with the structure coefficients of the character ring of $G(\mathbb{F}_q)$, namely, the space $\mathcal{C}(G(\mathbb{F}_q))$ equipped with tensor product and the basis the irreducible characters. This uses a correspondence between irreducible characters and conjugacy classes of $\Gl_n(\mathbb{F}_q)$.
\bigskip

The relationship between the representations of $G(\mathbb{F}_q)$ and the geometry of character varieties is  relevant when the genus of the Riemann surface is zero (otherwise we can still make sense of it, but need to introduce some functions on $G(\mathbb{F}_q)$ attached to the genus which do not seem to be of a great interest from the perspective of the representation theory of finite Lie groups, see for instance \cite{letellier2}).
\bigskip

The problem of finding a criterion for the non-vanishing of the structure coefficients of the character ring of $G(\mathbb{F}_q)$ is a very difficult problem. Already in the case of the symmetric group $S_n$ (where these structure coefficients are called the \emph{Kronecker coefficients}), finding a combinatorial criterion on triples of partitions for the vanishing of the corresponding Kronecker coefficients is still open and is known as one of the most challenging problem in algebraic combinatorics which goes back to Murnaghan (1938). The connection between the structure coefficients of the character ring of $\Gl_n(\mathbb{F}_q)$ and the geometry of generic character varieties (and their additive counterparts which are certain quiver varieties) allowed to connect the non-vanishing problem with the Deligne-Simpson problem (see for instance \cite{letellier2}\cite{letellier3}\cite{letellier4}\cite{scognamiglio1}).

\bigskip

In this paper we are interested in generic $\PGl_n$-character stacks with local monodromies in any conjugacy classes. The conjugacy classes of $\PGl_n$ which have a disconnected stabilizer carry non-trivial irreducible $\PGl_n$-equivariant local systems (in $\Gl_n$, all stabilizers are connected). We propose a general conjecture for the mixed Hodge series of the character stacks with coefficients in local systems arising from the conjugacy classes. We also show how it interpolates convolution products of the intermediate extensions of local systems on $\PGl_n(\overline{\F}_q)$-conjugacy classes with the tensor products of the corresponding character-sheaves on $\Sl_n(\F_q)$. This generalizes the $\Gl_n$-picture as in the $\Gl_n$-case the character-sheaves give the irreducible characters by taking the Frobenius trace (this is not true for other groups like $\Sl_n$). Our main picture looks like a multiplicative version of the Fourier inversion formula (see \S \ref{Fourinv}).  
\bigskip

We prove our conjecture under the Euler specialization and in some explicit examples.

\bigskip

We now describe the main results of this paper in more details. In the following, $K$ is an algebraically closed field which is either $\C$ or $\overline{\F}_q$ and $\kappa$ is a field with $\kappa=\C$ if $K=\C$ and $\kappa=\overline{\Q}_\ell$ if $K=\overline{\F}_q$ where $\ell \nmid q$. 
\bigskip

Put $\PGl_n\coloneqq \PGl_n(K)$.

\bigskip

We fix a $k$-tuple $\bm{\mathcal{C}}=(\mathcal{C}_1,\dots,\mathcal{C}_k)$ of conjugacy classes of $\PGl_n$. If $K=\overline{\F}_q$, we assume that the conjugacy classes $\mathcal{C}_1,\dots,\mathcal{C}_k$ are \emph{split}, i.e. that the eigenvalues are in $\F_q^{\times}$.

Fix $g \in \N$. We consider the character stack \begin{equation}\label{definition-intro}\mathcal{M}_{\overline{\bm{\mathcal{C}
}}}\coloneqq\left[X_{\overline{\bm {\mathcal{C}}}}/\PGl_n\right] .\end{equation}
where$$ X_{\overline{\bm {\mathcal{C}}}}\coloneqq\left\{(A_1,B_1,\dots,A_g,B_g,x_1,\dots,x_k) \in \PGl_n^{2g} \times \overline{\mathcal{C}}_1 \times \cdots \times \overline{\mathcal{C}}_k \ \left| \ \prod_{i=1}^g[A_i,B_i] x_1 \cdots x_k=1\right\}\right.$$
If $K=\C$, for a Riemann surface $X$ of genus $g$, a subset $D \subseteq X$ with $D=\{z_1,\dots,z_k\}$, we can identify $\mathcal{M}_{\overline{\bm {\mathcal{C}}}}$ with the moduli stack of $\PGl_n$-local systems on $X\setminus D$ such that the local monodromy around each $z_i$ belongs to the Zariski closure $\overline{\mathcal{C}}_i$. 

Such moduli spaces are also related to certain moduli spaces of parabolic $\PGl_n(\C)$-Higgs bundles, through the non-abelian Hodge correspondence for $(X,D)$, introduced by Simpson \cite{Simpson}.
\bigskip

We assume that the $k$-tuple  $\bm{\mathcal{C}}$ is \emph{generic} (see \S \ref{section-geometry} for the definition).
\bigskip

We show that the stack $\mathcal{M}_{\overline{\bm{\mathcal{C}}}}$, if non-empty, is an equidimensional  Deligne-Mumford stack of dimension $$\displaystyle (2g-2)n^2+2 +\sum_{i=1}^k \dim(C_i) .$$ Moreover, the  substack $$\mathcal{M}_{\bm{\mathcal{C}}}\coloneqq X_{\bm{\mathcal{C}}}/\PGl_n,$$
where $X_{\bm{\mathcal{C}}}\subset X_{\overline{\bm{\mathcal{C}}}}$ is the subset of $(2g+k)$-tuples $(A_1,B_1,\dots,A_g,B_g,x_1,\dots,x_k)$ with $x_i\in \mathcal{C}_i$, is an open substack of $\mathcal{M}_{\overline{\bm{\mathcal{C}}}}$ smooth and everywhere dense.

For more details, see Proposition \ref{proposition-DM}.

\subsection{Mixed Poincaré series of local systems on $\mathrm{PGL}_n$-character stacks}

One of the main aim of this paper is the computation of the intersection cohomology on $\mathcal{M}_{\overline{\bm{\mathcal{C}}}}$ with coefficients in certain local systems on $\mathcal{M}_{\bm{\mathcal{C}}}$ ($\ell$-adic local systems if $K=\overline{\F}_q$). 
\bigskip

Recall that the irreducible $\PGl_n$-equivariant local systems on a conjugacy class $\mathcal{C}$ of $\PGl_n$ are parametrized by the irreducible characters of the group $A(\mathcal{C})$ of the connected components of the stabilizer of $\mathcal{C}$. For such an irreducible character $\chi$ denote by $\mathcal{L}^\mathcal{C}_\chi$ the corresponding local system on $\mathcal{C}$.
\bigskip

Put $A(\bm{\mathcal{C}})\coloneqq A(\mathcal{C}_1) \times \cdots \times A(\mathcal{C}_k)$. For each $\chi=(\chi_1,\dots,\chi_k) \in \widehat{A(\bm{\mathcal{C}})}$, with $\chi_i \in \widehat{A(\mathcal{C}_i)}$, the local system $$\mathcal{L}^{\mathcal{C}_1}_{\chi_1} \boxtimes \cdots \boxtimes \mathcal{L}^{\mathcal{C}_k}_{\chi_k}$$
on $\mathcal{C}_1\times\cdots\times\mathcal{C}_k$ being $\PGl_n$-equivariant for the diagonal action, defines a unique local system $\mathcal{E}_\chi$ on the open substack $\mathcal{M}_{\bm{\mathcal{C}}}$.

We are interested in the intersection cohomology $IH^{\bullet}_c(\mathcal{M}_{\overline{\bm{\mathcal{C}}}},\mathcal{E}_{\chi})$, i.e. the hypercohomology $\mathbb{H}^{\bullet}_c(\mathcal{M}_{\overline{\bm{\mathcal{C}}}},\IC^{\bullet}_{\mathcal{M}_{\overline{\bm{\mathcal{C}}}},\mathcal{E}_{\chi}})$, where $\IC^{\bullet}_{\mathcal{M}_{\overline{\bm{\mathcal{C}}}},\mathcal{E}_{\chi}}$ is the intersection cohomology complex defined from the local system $\mathcal{E}_{\chi}$. 

\bigskip

Recall that each cohomology group $IH^{i}_c(\mathcal{M}_{\overline{\bm{\mathcal{C}}}},\mathcal{E}_{\chi})$ is equipped with a weight filtration (increasing) $W^i_\bullet$ from which we define the mixed Poincar\'e series

$$
IH_c\big(\mathcal{M}_{\overline{\bm{\mathcal{C}}}},\mathcal{E}_{\chi};q,t\big)=\sum_{i,r}{\rm dim}\left(W^i_r/W^i_{r-1}\right)q^{r/2}t^i.
$$

One of the main result of this paper is a combinatorial (conjectural) formula for the above mixed Poincaré series (see Conjecture \ref{IH-conjecture-thm}).

\begin{conjecture}
\label{IH-conjecture-thm-intro}
For any $\chi \in \widehat{A(\bm{\mathcal{C}})}$, we have

\begin{equation}\label{IHconjecture-intro}
IH_c(\mathcal{M}_{\overline{\bm{\mathcal{C}}}},\mathcal{E}_\chi;q,t)=\dfrac{(qt^2)^{\frac{{\rm dim}\mathcal{M}_{\overline{\bm{\mathcal{C}}}}}{2}}\iota(\bm{\mathcal{C}})}{(qt^2+t)^{2g}|A(\bm{\mathcal{C}})|}\sum_{r \in R_{d_1,\dots,d_k}}\Delta_{r}^{s_{\chi}}\,\mathbb{H}_{g,\bm \omega_r}\left(-t\sqrt{q},\dfrac{1}{\sqrt{q}}\right)
\end{equation}
where $\iota(\bm{\mathcal{C}})$ is the number of irreducible components of $\mathcal{M}_{\overline{\bm{\mathcal{C}}}}$, $d_i=|A(\mathcal{C}_i)|$, and $\bm{\omega}_r$ is a combinatorial object encoding the Jordan form of  conjugacy classes $C_1,\dots, C_k$ of $\Gl_n$ whose images are $\mathcal{C}_1,\dots,\mathcal{C}_k$ under the projection $\Gl_n\rightarrow\PGl_n$.
\end{conjecture}

\bigskip
For detailed definitions and notation of the symbols appearing in Formula (\ref{IHconjecture-intro}), see \cref{localsystemsPGLncohomology}. The rational functions $\mathbb{H}_{g,\omega_r}(z,w)$ appearing in Formula (\ref{IHconjecture-intro}) are the ones introduced by Hausel, Letellier and Rodriguez-Villegas \cite{HA} in the semisimple case and by Letellier in general \cite{L} to compute the cohomology of generic $\Gl_n$-character stacks.

By Theorem \ref{redGL}, the above conjecture is a consequence of a conjectural formula for some twisted mixed Poincar\'e series of some $\Gl_n$-character stacks (the twist come from the action of a certain finite group $H(\bm {\mathcal{C}})$ defined below). This twisted conjectural formula for $\Gl_n$ is new (see Conjecture \ref{conjIHy}) and is a modification of the untwisted version introduced in \cite[Conjecture 1.2.1]{HA} (see also \cref{chapterGln} for a review). Recall that we have many evidences for the untwisted conjectural formula: from the works \cite{HA} and \cite{L} we know that it holds under the Euler specialization $t=-1$. Moreover, from the work of Mellit \cite{MellitD}\cite{Mellit}(who generalized some ideas of Schiffmann \cite{schiffmann} to the parabolic case) and Ballandras \cite{ballandras}, we know that it also holds under the Poincaré specialization (i.e. at $q=1$). 

For the twisted version introduced in this paper, we show that it holds under the Euler specialization (see Theorem \ref{non-degenerate-twisted}) from which we prove the specialization $t\mapsto -1$ of Conjecture \ref{IH-conjecture-thm-intro}, namely

\begin{teorema}For any $\chi\in \widehat{A(\bm{\mathcal{C}})}$, we have

$$
IE(\mathcal{M}_{\overline{\bm{\mathcal{C}}}},\mathcal{E}_\chi;q):=IH_c(\mathcal{M}_{\overline{\bm{\mathcal{C}}}},\mathcal{E}_\chi;q,-1)=\dfrac{q^{{\frac{{\rm dim}\mathcal{M}_{\overline{\bm{\mathcal{C}}}}}{2}}}\iota(\bm{\mathcal{C}})}{(q-1)^{2g}|A(\bm{\mathcal{C}})|}\sum_{r \in R_{d_1,\dots,d_k}}\Delta_{r}^{s_{\chi}}\,\mathbb{H}_{g,\bm \omega_r}\left(\sqrt{q},\dfrac{1}{\sqrt{q}}\right).
$$
\end{teorema}

The results of Mellit or Ballandras (for the specialization $q=1$) do not generalize immediately to the twisted case. Indeed, the Higgs bundles are of additive nature and the group of connected components for the adjoint orbits are trivial. 
 \bigskip

As a further evidence we prove the following result (see Theorem \ref{casek=4}).

\begin{teorema}Conjecture \ref{IH-conjecture-thm-intro} is true when $g=0$, $n=2$,  $k=4$ and when the conjugacy classes are all semisimple regular.
 \end{teorema}
\bigskip

The groups of the form $A(\mathcal{C})$ are subgroups of the same group $\bm\mu_n$. Therefore the group 
$$
H(\bm{\mathcal{C}}):=\{(y_1,\dots,y_k)\in A(\bm{\mathcal{C}})\,|\, y_1\cdots y_k=1\},
$$
is well-defined and plays an important role to establish the main results of our paper. Indeed, we use the fact that it acts on some generic $\Gl_n$-character stacks $\mathcal{M}_{\overline{\bm{C}}}$ with local monodromies in the Zariski closure of the $\Gl_n$-conjugacy classes $C_1,\dots,C_k$ above the $\PGl_n$- conjugacy classes $\mathcal{C}_1,\dots,\mathcal{C}_k$.  The group $H(\bm{\mathcal{C}})$ is a subgroup of the so-called Weyl group of the $\Gl_n$-character stack $\mathcal{M}_{\overline{\bm{C}}}$. The whole Weyl group does not act on the $\Gl_n$-character stack itself but it does act on its cohomology thanks to Ballandras' result \cite[Theorem 5.5, Corollary 5.6]{ballandras}. The two actions of $H(\bm{\mathcal{C}})$ on cohomology should coincide. While our action of $H(\bm{\mathcal{C}})$ preserves the weight filtration on cohomology (because it acts on the stack itself), the action of the Weyl group defined in \cite{ballandras} uses analytic methods and it is unclear whether it preserves the weight filtration on cohomology.

On the corresponding moduli spaces of $\Gl_n$-parabolic Higgs bundles, the action of $H(\bm{\mathcal{C}})$ should correspond to the action of \textit{Hecke correspondences}, also called \textit{elementary transformations}. For a definition of the latter groups of automorphisms, see for instance \cite{alfaya-gomez}.

\subsection{Connection with Langlands duality over finite fields}

Here we explain how to relate the geometric results on character stacks to the representation theory of the finite reductive groups $\PGl_n(\F_q),\Sl_n(\F_q)$. As mentioned earlier, the relevant case is the case $g=0$ (i.e. $X=\mathbb{P}^1_{\C}$) which we assume in this section. We also omit $g$ from the notation throughout.

\bigskip

For any conjugacy class $\mathcal{C}$ of $\PGl_n(\overline{\F}_q)$ and $\chi \in \widehat{A(\mathcal{C})}$ defined over $\F_q$, we denote by
$$ {\bf X}_{\IC^{\bullet}_{\overline{\mathcal{C}},\mathcal{L}^\mathcal{C}_{\chi}}}: \PGl_n(\F_q) \to \overline{\Q}_{\ell} $$
the characteristic function of the intersection cohomology complex $\IC^{\bullet}_{\overline{\mathcal{C}},\mathcal{L}^{\mathcal{C}}_{\chi}}$ (see Formula (\ref{charIC})). 
\bigskip

If our conjugacy classes $\mathcal{C}_1,\dots,\mathcal{C}_k$ are over $\mathbb{C}$ we may choose a finitely generated $\Z$-subalgebra $R$ of $\mathbb{C}$ and  $R$-schemes $\mathcal{C}_1/_R,\dots,\mathcal{C}_k/_R$ giving back $\mathcal{C}_1,\dots,\mathcal{C}_k$ after scalar extension from $R$ to $\mathbb{C}$. Then we may choose $R$ "large enough" so that, for any finite field $\F_q$ and any ring homomorphism $R\rightarrow\F_q$, the $k$-tuple of conjugacy classes $\bm{\mathcal{C}}/_{\F_q}$ of $\PGl_n(\overline{\F}_q)$ obtained from $\bm{\mathcal{C}}$ by base change is generic and of same "Jordan type" as $\bm{\mathcal{C}}$ (with $A(\bm{\mathcal{C}}/_{\F_q})=A(\bm{\mathcal{C}})$). By notation abuse (and to alleviate the notation) we will denote again by $\bm{\mathcal{C}}$ the $k$-tuple $\bm{\mathcal{C}}/_{\F_q}$.
\bigskip

We prove the following result (see Theorem \ref{theorem-diagram}).

\begin{teorema}
\label{theorem-convolution-intro}
For any  $\chi \in A(\bm{\mathcal{C}})$, we have
\begin{equation}
\left\langle {\bf X}_{{\IC^{\bullet}_{\overline{\mathcal{C}}_1,\mathcal{L}^{\mathcal{C}_1}_{\chi_1}}}} \ast \cdots \ast {\bf X}_{{\IC^{\bullet}_{\overline{\mathcal{C}}_k,\mathcal{L}^{\mathcal{C}_k}_{\chi_k}}}},1_{1} \right\rangle_{\PGl_n(\F_q)}=IE\big(\mathcal{M}_{\overline{\bm{\mathcal{C}}}},\mathcal{E}_{\chi};q\big).  
\label{equation-convolution-intro}
\end{equation}    
where $*$ is the convolution product of functions on $\PGl_n(\F_q)$, and $1_1$ is the function that takes the value $1$ at $1$ and $0$ elsewhere.
\end{teorema}

\bigskip

\begin{remark}
The proof of Theorem \ref{theorem-convolution-intro} comes from a generalization of Katz's result \cite[Appendix A]{HRV} relating the E-polynomial of a variety $X/_{\C}$ to the count of points of its base change $X/_{\F_q}$ over $\F_q$. This result of Katz was used by Hausel, Letellier and Rodriguez-Villegas \cite{HA} to compute E-polynomials of generic $\Gl_n(\C)$-character varieties with semisimple local monodromies.

We extend Katz's result (see Theorem \ref{countingfq} and Theorem \ref{polynomial-count}), to relate $IE(X;q)$ to the characteristic function of the intersection cohomology complex of $X/_{\F_q}$. We will also need a twisted version  (see Theorem \ref{theoremtwistedE}) when we have an action of a  finite group on $X$.  
\end{remark}

\bigskip

Given a conjugacy class $\mathcal{C}$ of $\PGl_n(\overline{\F}_q)$ and $\chi \in A(\mathcal{C})$ (or equivalently an irreducible $\PGl_n$-equivariant local system on $\mathcal{C}$), in this paper we introduce a way to define a so-called \emph{character-sheaf} on $\Sl_n(\overline{\F}_q)$ denoted by $\mathcal{X}_{\mathcal{C},\chi}^{\Sl_n}$ (see \S\ref{CSSL}).

The theory of character-sheaves is due to Lusztig \cite{Lusztig}\cite{CS2}\cite{LUG} (see also \cite{mars} or \cite{LaumonCS} for a survey) and is very important as the characteristic functions of the character-sheaves on $G$ are closely related to the irreducible characters of the finite group $G(\F_q)$ (they mostly coincide for a general $G$ and always coincide when $G=\Gl_n$).

\bigskip

In \cref{section-sln}, we show the following result (see Theorem \ref{theoremmultiplicitiessln}).

\begin{teorema}
    \label{theoremmultiplicitiessln-intro}
For any $\chi \in \widehat{A(\bm{\mathcal{C}})}$, we have
\begin{equation}
\label{multiplicities-intro}
\left\langle {\bf X}_{\mathcal{X}_{\mathcal{C}_1,\chi_1}^{\Sl_n}}\cdots{\bf X}_{\mathcal{X}_{\mathcal{C}_k,\chi_k}^{\Sl_n}},1\right\rangle_{\Sl_n(\F_q)}=\dfrac{\iota(\bm{\mathcal{C}})}{|A(\bm{\mathcal{C}})|}\sum_{r \in R_{d_1,\dots,d_k}}\Delta_{r}^{s_{\chi}}\,\mathbb{H}_{\bm \omega_r}\left(0,\sqrt{q}\right).     
\end{equation} 
where $\cdot$ is the pointwise multiplication of functions on $\Sl_n(\F_q)$ and $1$ is the trivial character of $\Sl_n(\F_q)$.
\end{teorema}

\bigskip

Under Conjecture \ref{IH-conjecture-thm-intro}, the RHS of Formula (\ref{multiplicities-intro}) has the following interpretation in terms of the cohomology of $IH^{\bullet}_c(\mathcal{M}_{\overline{\bm{\mathcal{C}}}},\mathcal{E}_{\chi})$. We define the \emph{pure part} of $IH_c(\mathcal{M}_{\overline{\bm{\mathcal{C}}}},\mathcal{E}_{\chi};q,t)$ as
$$PIH_c\big(\mathcal{M}_{\overline{\bm{\mathcal{C}}}},\mathcal{E}_{\chi};q\big)\coloneqq \sum_i\dim(W^i_i/W^i_{i-1})q^{i/2}.$$

Formula (\ref{IHconjecture-intro}) together with Formula (\ref{multiplicities-intro}) implies that \begin{equation}
PIH_c\big(\mathcal{M}_{\overline{\bm{\mathcal{C}}}},\mathcal{E}_{\chi};q\big)=\dfrac{q^{{\frac{{\rm dim}\mathcal{M}_{\overline{\bm{\mathcal{C}}}}}{2}}}\iota(\bm{\mathcal{C}})}{|A(\bm{\mathcal{C}})|}\sum_{r \in R_{d_1,\dots,d_k}}\Delta_{r}^{s_{\chi}}\,\mathbb{H}_{\bm \omega_r}\left(0,\sqrt{q}\right)=q^{{\frac{{\rm dim}\mathcal{M}_{\overline{\bm{\mathcal{C}}}}}{2}}}\left\langle {\bf X}_{\mathcal{X}_{\mathcal{C}_1,\chi_1}^{\Sl_n}}\cdots{\bf X}_{\mathcal{X}_{\mathcal{C}_k,\chi_k}^{\Sl_n}},1\right\rangle_{\Sl_n}   
\end{equation}

We can summarize the main results of this paper with the following diagram

\begin{center}
\begin{tikzcd}
\label{diag11}
& & IH_c\left(\mathcal{M}_{\overline{\bm{\mathcal{C}}}},\mathcal{E}^{\bm{\mathcal{C}}}_{\chi};q,t\right)\ar[dd, "t\mapsto -1"']&\text{RHS Formula (\ref{IHconjecture-intro})}\arrow[dd, "\text{ "pure part"}"]\arrow[ddl,"t \mapsto -1"']\arrow[l, equal, "\text{Conjecture \ref{IH-conjecture-thm-intro}}"']\\
&&&\\
& & \left\langle {\bf X}_{{\IC^{\bullet}_{\overline{\mathcal{C}}_1,\mathcal{L}_{\chi_1}}}} \ast \cdots \ast {\bf X}_{{\IC^{\bullet}_{\overline{\mathcal{C}}_k,\mathcal{L}_{\chi_k}}}},1_{1} \right\rangle_{\PGl_n(\F_q)} &  q^{{\frac{{\rm dim}\mathcal{M}_{\overline{\bm{\mathcal{C}}}}}{2}}}\left\langle {\bf X}_{\mathcal{X}_{\mathcal{C}_1,\chi_1}^{\Sl_n}}\cdots{\bf X}_{\mathcal{X}_{\mathcal{C}_k,\chi_k}^{\Sl_n}},1\right\rangle_{\Sl_n(\F_q)}
\end{tikzcd}
\end{center}

Therefore the mixed Hodge series of $\PGl_n$-character stacks interpolate the generic structure coefficients of the following two based rings : 

(i) the ring of class functions on $\PGl_n(\F_q)$ equipped with the convolution product and with basis the characteristic functions of the intersection cohomology complexes of local systems on the conjugacy classes of $\PGl_n(\overline{\F}_q)$,

(ii) the ring of class functions on $\Sl_n(\F_q)$ equipped with the pointwise multiplication and with basis the characteristic functions of the character sheaves on $\Sl_n(\overline{\F}_q)$.

\begin{oss}
Under the correspondence 

$$
\{\IC^\bullet_{\overline{\mathcal{C}},\mathcal{L}_\chi}\}_{\mathcal{C},\chi}\longrightarrow \{\mathcal{X}^{\Sl_n}_{\mathcal{C},\chi}\}_{\mathcal{C},\chi}$$
that makes the above diagram work, the identity element $1_1$ for the convolution product on functions on $\PGl_n(\F_q)$ (i.e. the function associated to the pair $(\mathcal{C},\chi)=(\{1\}, {\rm Id})$) corresponds to the identity element for the pointwise multiplication on functions on $\Sl_n(\F_q)$, i.e. to the trivial character of $\Sl_n(\F_q)$. This correspondence agrees with the first construction of Springer correspondence on Lie algebras using Fourier transforms \cite{Springer}. In the classical Springer correspondence on groups due to Borho-MacPherson \cite[\S 6.2]{shoji} (which uses the decomposition theorem instead of Fourier), the function $1_1$ would correspond to the Steinberg character of $\Sl_n(\F_q)$. Recall that one goes from the original construction of the Springer correspondence (using Fourier transforms) to Borho-MacPherson's construction by tensoring irreducible characters of Weyl groups by the sign character.
\end{oss}

\subsection{Comments on the "pure part" specialization}

A diagram similar to the above one is known in the case of $\Gl_n$ (see \S\ref{picture-GLn}). In particular, the pure part of the conjectural expression for the cohomology of $\Gl_n(\C)$-character stacks gives back the multiplicities for the character ring of the finite group $\Gl_n(\F_q)$. Recall that for the finite group $\Gl_n(\F_q)$ the characteristic functions of character sheaves coincide (up to sign) with the irreducible characters. This is not the case for the group $\Sl_n(\F_q)$, where the two bases are quite different, and an algorithm for finding the basis-change matrix was only discovered about 20 years ago, in two independent papers by Bonnaf\'e and Shoji \cite{Bonnafe}\cite{Shoji}.

\bigskip

In the case of $\Gl_n$, moreover, it is easier to understand the "pure part" specialization, thanks to the geometric interpretation of it in terms of quiver varieties, as we  now see.

Let $(C_1,\dots,C_k)$ be a generic $k$-tuple of conjugacy classes of $\Gl_n$. The pure part of the intersection cohomology of the $\Gl_n$-character stack with local monodromies in $\overline{C}_1,\dots,\overline{C}_k$ is conjectured to be the intersection cohomology of the quiver stack 

$$
\mathcal{Q}=\left[\{(x_1,\dots,x_k)\in\overline{\mathcal{O}}_1\times\dots\times\overline{\mathcal{O}}_k\,|\, x_1+\cdots+x_k=0\}/\Gl_n\right],
$$
where $(\mathcal{O}_1,\dots,\mathcal{O}_k)$ is a generic $k$-tuple of adjoint orbits of $\frak{gl}_n$ with same Jordan type as $C_1,\dots,C_k$. 

More precisely, the Poincar\'e series of these quiver stacks is computed, in \cite{HA} in the semisimple case and in  \cite{letellier2} for any adjoint orbits, and we can check that they agree with the pure part of the conjectured formula for the mixed Poincar\'e series of the corresponding character stacks.
\bigskip

Using Fourier transforms on $\frak{gl}_n$, it is proved \cite[Theorem 7.4.1]{letellier2} that the Poincar\'e series of these quiver stacks agree with the multiplicity $\langle \chi_1 \otimes \cdots \otimes \chi_k, 1 \rangle$ of the trivial character in the tensor product of irreducible characters $(\chi_1,\dots,\chi_k)$ of $\Gl_n(\F_q)$ of same Jordan type as $\mathcal{O}_1,\dots,\mathcal{O}_k$.  The proof uses the work of Springer \cite{Springer}, Kazhdan \cite{Kazhdan}, Lusztig \cite{Lu-Fourier} and Letellier \cite{Let} from which we get a precise relationship between the values of the irreducible characters of $\Gl_n(\F_q)$ and those of the characteristic functions of the Deligne-Fourier transform of the intersection cohomology complexes on the adjoint orbits of $\frak{gl}_n(\overline{\F}_q)$, see \cite[Theorem 6.9.1]{letellier2}.
\bigskip

For an arbitrary group, the relationship between the Lie algebra and the group is unclear as, unlike for groups,  stabilizers of semisimple adjoint orbits of Lie algebras  are always connected (when the characteristic of the field $K$ is zero or large enough).

\subsection{Fourier inversion formula}\label{Fourinv}

In the above diagram, put $K_i:=\IC^\bullet_{\overline{C}_i,\mathcal{L}_{\chi_i}}$ and let $\hat{K}_i$ be the corresponding character sheaf $\mathcal{X}_{\mathcal{C}_i,\chi_i}^{\Sl_n}$. 

Notice that the multiplicities in the above diagram are the Frobenius traces, respectively  of

$$
\mathcal{H}^\bullet(K_1*\cdots* K_k|_{[1/\PGl_n]}),\hspace{.5cm}\text{ and }\hspace{.5cm} H_c^\bullet([\Sl_n/\Sl_n],\hat{K}_1\otimes\cdots\otimes\hat{K}_k).$$ 

Consider a Deligne-Fourier transform $\hat{\cdot}:D_c^b(V)\rightarrow D_c^b (V^*), K\mapsto \hat{K}$ on the derived category of $\ell$-adic sheaves on a vector space $V$. By the Fourier inversion formula together with the fact that it transforms the convolution product into the tensor product, for any complexes $K_1,\dots, K_k$ on $V$, we get that 

$$
\mathcal{H}^\bullet(K_1*\cdots *K_k|_{\{0\}}),\hspace{.5cm}\text{ and }\hspace{.5cm} H_c^\bullet(V^*,\hat{K_1}\otimes\cdots\otimes\hat{K_k})
$$
are isomorphic (up to a Tate twist by ${\rm dim}(V)$) and so our conjectural picture is  a kind of multiplicative version of the Fourier inversion formula.

\bigskip

\subsection{Structure of the paper}

The paper is organised as follows. \begin{itemize}
    \item In Section $2$, we review the definitions and properties of intersection cohomology and generalize Katz’s result on $E$-polynomials of complex varieties \cite[Appendix A]{HRV} to the case of intersection cohomology, taking into account the twist induced by the action of a finite group. 
    \item In Section $3$, we introduce the combinatorics to describe the conjugacy classes of $\Gl_n$.
    \item In Section $4$, we recall the results on the cohomology of $\Gl_n$-character stacks and using the results of Section $2$, we extend them to a twisted setting.
    \item In Section $5$, we show our main results concerning the  cohomology of $\PGl_n$-character stacks.
    
    \item In Section $6$, we review the definition of Lusztig's character-sheaves and the correspondence between local systems on conjugacy classes of $G$ and character-sheaves on the dual group $G^\flat$.
    \item In Section $7$, we discuss the case of the dual pair $(\Sl_n,\PGl_n)$.
    \item In Section $8$, we show our main results concerning the connection between the representation theory of $(\Sl_n,\PGl_n)$ and the geometry of $\PGl_n$-character stacks.
    
    \item In Section $9$, we study the case $n=2$ to illustrate our main results and conjectures.
\end{itemize}

\subsection{Acknowledgements.}

The authors are very grateful to Luca Migliorini and Jean Michel for many useful discussions regarding this paper. A part of this work was done while the second author was visiting the Université Paris Cité. The second author would like to thank UPC for its generous support.

\section{Preliminaries on intersection cohomology and weight filtration}

In the following, $K$ is an algebraically closed field which is either $\C$ or $\overline{\F}_q$  and $X$ is an algebraic variety or a Deligne-Mumford stack over $K$. We denote by $D^b_c(X),\Perv(X)$ respectively the derived category of constructible sheaves/the abelian category of perverse sheaves on $X$ with coefficients in the field $\kappa$ with $\kappa=\C$ if $K=\C$ and $\kappa=\overline{\Q}_\ell$ if $K=\overline{\F}_q$ where $\ell \nmid q$.

We will also need the notion of \textit{Weil structure} when $K=\overline{\F}_q$ and $F:X \to X$ is a geometric Frobenius (or equivalently, an $\F_q$-stack $X_o$ such that $X=X_o\times_{\F_q}\overline{\F}_q$).

 An $F$-equivariant structure (or Weil structure) on $\mathcal{F}\in D^b_c(X)$ is then an isomorphism $$\phi:F^*(\mathcal{F}) \to \mathcal{F} .$$ 

\begin{remark}If $\mathcal{F}$ is the pullback of a complex $\mathcal{F}_o$ on $X_o$, then it admits a canonical $F$-equivariant structure, see for instance \cite[Chapter 1]{Weil}. 
\end{remark}
\bigskip

We say that $(\mathcal{F},\phi)$ is an $F$-\emph{equivariant complex} on $X$. Given two $F$-equivariant complexes $(\mathcal{F},\phi)$ and $(\mathcal{F}',\phi')$, the Frobenius $F$ acts on ${\rm Hom}(\mathcal{F},\mathcal{F}')$ as 

$$
f\mapsto \phi'\circ F^*(f)\circ\phi^{-1}.
$$
We denote by $D_c^b(X;F),\Perv(X;F)$ the category of $F$-equivariant complexes/$F$-equivariant perverse sheaves on $X$ with ${\rm Hom}(\mathcal{F},\mathcal{F'})^F$ as the set of morphisms $(\mathcal{F},\phi)\rightarrow(\mathcal{F}',\phi')$.

The characteristic function of $(\mathcal{F},\phi)\in D_c^b(X;F)$ is the function $\X_{\mathcal{F},\phi}:X^F \to \overline{\Q}_{\ell}$ defined by 

\begin{equation}
\X_{\mathcal{F},\phi}(x)\coloneqq \sum_{i \in \Z}(-1)^i \tr(\phi_x^i: \mathcal{H}^i_x(\mathcal{F}) \to \mathcal{H}^i_x(\mathcal{F})).
\label{charIC}\end{equation}

The function $\X_{\mathcal{F},\phi}$ does depend on the choice of the isomorphism $\phi$. However, in all the cases of relevance for this article, we can make a canonical choice of the isomorphism $\phi$ and we will often drop it from the notation. In particular, if $X$ is an algebraic group, we will always assume that $\phi_e=Id$. 

\begin{oss}
Given a Weil structure $\phi$, we denote by 
$$\phi^n \coloneqq \phi \circ (F^*)(\phi)  \cdots (F^*)^{n-1}$$
the Weil structure $\phi^n$ for the  $\F_{q^n}$-Frobenius $F^n$. The characteristic function $\X_{\mathcal{F},\phi^n}$ is thus a function on $X^{F^n}$.
\end{oss}

\subsection{Intersection cohomology}
\label{intersection-cohomology}
For an equidimensional variety $X$ and an open smooth subset $U \subseteq X$, for every local system $\mathcal{L}$ on $U$, we denote by $\IC^\bullet_{X,\mathcal{L}} $ the intersection cohomology complex on $X$ with coefficients in $\mathcal{L}$. In particular $\IC^\bullet_{X,\mathcal{L}}[{\rm dim}\, X]$ is  a perverse sheaf.

If $\mathcal{L}=\kappa$, we will simply write $\IC^\bullet_X$ instead of $\IC^\bullet_{X,\kappa}$. If $K=\overline{\F}_q$  and $F: X \to X$ is a geometric Frobenius, the intersection cohomology sheaf has a canonical Weil structure $\phi: F^*(\IC^\bullet_X) \to \IC^\bullet_X$.

\bigskip

Let $X$ be an equidimensional variety and  $X=X_1 \cup \cdots \cup X_r$ its decomposition into irreducible components and denote by $\nu$ the canonical finite map $$\nu:X_1 \bigsqcup \cdots \bigsqcup X_r \to X .$$ Put $X_i^{\circ}=X_i \setminus (\bigcup_{j \neq i}X_j)$ for each $i$. Notice that $X_i^{\circ}$ is a non-empty and dense open subset of $X_i$ and $X_i^{\circ} \cap X_j^{\circ}=\emptyset$ for each $i,j$.

Given a smooth $U \subseteq X$, we must have that $\displaystyle U=\bigsqcup_{i=1}^r (U \cap X_i^{\circ})$. Put $U_i \coloneqq U \cap  X_i^{\circ}$ for each $i=1,\dots,r$. The datum of a local system $\mathcal{L}$ on $U$ is thus equivalent to the datum of local systems $\mathcal{L}_i$ on $U_i$ for each $i=1,\dots,r$.

It is not hard to see that we have an equality \begin{equation}
\label{reducible-IC}
\IC^\bullet_{X,\mathcal{L}}=v_*\left(\bigoplus_{i=1}^r\IC^\bullet_{X_i,\mathcal{L}_i}\right).    
\end{equation}

\bigskip

We denote by $IH_c^{\bullet}(X)$ the (compactly supported) intersection cohomology of $X$, i.e. $IH_c^{\bullet}(X)\coloneqq \mathbb{H}_c^{\bullet}(X,\IC^\bullet_X)$. If $X$ is smooth, we have $IH_c^{\bullet}(X)=H^{\bullet}_c(X)$.

\bigskip

\begin{remark}
From Equation (\ref{reducible-IC}) we deduce that, if $X=X_1 \cup \cdots \cup X_r$ is the decomposition into irreducible components of an equidimensional variety, we have an equality \begin{equation}
    IH^{\bullet}_c(X)=\bigoplus IH^{\bullet}_c(X_i)
\end{equation}   
\end{remark}

\subsection{Weight filtrations}
\label{chapter-weight-filtration}
If $K=\C$, each intersection cohomology group is equipped with the weight filtration $W^k_{\bullet}IH_c^k(X)$, introduced by Saito in \cite{saito}. If $X$ is smooth, through the identification $IH_c^{\bullet}(X)=H_c^{\bullet}(X)$, it corresponds to the weight filtration introduced by Deligne in \cite{HodgeIII}.

If $K=\overline{\F}_q$ and we assume to have a geometric Frobenius $F: X \to X$, we have a weight filtration $W^k_{\bullet}IH^k_c(X)$,   where $W^k_{m}IH^k_c(X)$  is the subspace on which the  eigenvalues of the Frobenius $F$ are of absolute value $\leq q^{\frac{m}{2}}$.


We define the mixed intersection cohomology Poincar\'e polynomial $IH_c(X;q,t) \in \Z[\sqrt{q},t]$

$$
IH_c(X;q,t)\coloneqq\sum_{i,k}{\rm dim}(W^k_i/W^k_{i-1})q^{i/2}t^k.
$$
Then $IH_c(X;1,t)=\sum_k{\rm dim}\, IH_c^k(X)\, t^k$ is the (compactly supported) intersection cohomology Poincar\'e polynomial and $IH_c(X;q,-1)$ is the so-called $IE$-polynomial denoted by $IE(X;q)$. In the cases of relevance for this article, $IH_c(X,q,t)$ and $IE(X,q)$ will be actual polynomial in $q$, i.e. $W^k_i/W^k_{i-1}=0$ unless $i$ is even.

We also define the pure part $PIH_c(X;q)$ as  $$PIH_c(X;q)\coloneqq \sum_{k}\dim(W^k_k/W^k_{k-1})q^{k/2}.$$

\bigskip

Given a variety $X/_{\overline{\F}_q}$ with Frobenius $F:X \to X$, say that $X$ has the \textit{IC-polynomial} property with IC-polynomial $P_X(t) \in \Z[t]$ if, for any $n \in \N_{>0}$, \begin{equation}
\sum_{x \in X^{F^n}}\X_{\IC^\bullet_X,\phi^n}(x)=P_X(q^n).    
\end{equation}

\begin{oss}
Notice that, if $X$ is smooth, we have  $$\displaystyle \sum_{x \in X^{F^n}}\X_{\IC^\bullet_X,\phi^n}(x)=|X^{F^n}|$$ and thus $X$ has the IC-polynomial property if $X$  has polynomial count in the classical sense with counting polynomial $P_X(t)$, see for instance \cite[Appendix by Katz]{HRV}, \cite[Section 2.2]{LRV}.    
\end{oss}

\bigskip

We have the following.

\begin{teorema}
\label{countingfq}
If $X/_{\overline{\F}_q}$ has the \textit{IC-polynomial property}  with IC-polynomial $P_X(t)$, we have
\begin{equation}
P_X(q)=IE(X;q)
\end{equation}
\end{teorema}

The proof of this theorem is very similar to that of \cite[Theorem 2.8]{LRV}. We give it here for completeness. 

\begin{proof}
By the trace formula, for any $r$, we have \begin{equation}
\label{grothendiecktrace0}
    P_X(q^r)=\sum_{x \in X^{F^r}}\X_{\IC^\bullet_X,\phi^r}=\sum_{k}(-1)^k \tr\left(F^r\,|\,IH^k_c(X)\right) .\end{equation}

Let $\lambda_{i,k,1}q^{\frac{i}{2}},\dots,\lambda_{i,k,s_{k,i}}q^{\frac{i}{2}}$  be the eigenvalues, counted with multiplicities, of $F$  on $W^k_i/W^k_{i-1}$. We thus have, for any $r \geq 1$, 

$$\tr\left(F^r\,|\,W^k_{i}/W^k_{i-1}\right)=\sum_{h=1}^{s_{k,i}}(\lambda_{i,k,h})^rq^{\frac{ri}{2}}$$ and thus $$\sum_{k}(-1)^k \tr\left(F^r\,|\,IH^k_c(X)\right)=\sum_{i}\left(\sum_k (-1)^k \sum_{h=1}^{s_{k,i}}(\lambda_{i,k,h})^r\right)q^{\frac{ir}{2}}  .$$ 

If $\displaystyle P_X(t)=\sum_{i}c_{i}t^i\in\Z[t]$, from Formula (\ref{grothendiecktrace0}) we deduce that 
\begin{equation}
\sum_k (-1)^k \sum_{h=1}^{s_{k,i}}(\lambda_{i,k,h})^r=\begin{cases}
c_{\frac{i}{2}} \text{ if } i \text{ is even }\\
0 \text{ otherwise}
\end{cases}.
\end{equation}

From \cite[Lemma 2.9]{LRV}, we deduce that $$\dim \left(W^k_i/W^k_{i-1}\right)=\begin{cases}
c_{\frac{i}{2}} \text{ if } i \text{ is even }\\
0 \text{ otherwise}
\end{cases}.$$    
\end{proof}

\bigskip

For a variety $X/_{\C}$, we say that $X$ has the IC-polynomial property with IC-polynomial $P_X(t)$ if there exists a finitely generated $\Z$-subalgebra $R \subseteq \C$ and a separated $R$-scheme $X_R$ of finite type such that 
$$
X_R \times_R \C \cong X,
$$
and such that, for any ring homomorphism $f:R \to \F_q$, the variety $X^f=X_R\times_R\overline{\F}_q$ has the IC-polynomial property with IC-polynomial $P_X(t)$. 

We have the following result.
\begin{teorema}\label{polynomial-count} If $X/_\C$ has the IC-polynomial property with IC-polynomial $P_X(t)$, then 
\begin{equation}
\label{katzformula}
IE(X;q)=P_X(q).
\end{equation}
\label{Katz}\end{teorema}

\begin{oss}
Letellier \cite[Theorem 3.3.2]{letellier2} shows Theorem \ref{Katz} above under some additional hypothesis on the variety $X$ but keeping also track of the Hodge filtration on intersection cohomology. If we do not bother about the Hodge filtration (as it is the case in this paper), Theorem \ref{polynomial-count} above is a consequence of Theorem \ref{countingfq} and Theorem \ref{theorem-ICextension} below.

Although Theorem \ref{theorem-ICextension}  seems to be well known to the experts, we were not able to locate a proof in the literature. 
\end{oss}

\begin{teorema}
\label{theorem-ICextension}  
Given $X/_{\C}$ and $R \subseteq \C$ as above, there exists an open subscheme $U \subseteq \spec(R)$ such that, for any $f:R \to \F_q$, we have 
\begin{equation}
IH_c(X;q,t)=IH_c(X^f;q,t)    
\end{equation}
\end{teorema}

\begin{proof}[Sketch of proof]
Let $\psi:\widetilde{X} \to X$ be a resolution of singularities. Restricting to an open subset $V=\spec(R') \subseteq \spec(R)$ if necessary, we can assume that $\widetilde{X}$ and $\psi$ are defined over $R$, i.e. there exists a smooth $R$-scheme $\widetilde{X}_R$ and a projective map $\psi_R:\widetilde{X}_R \to X_R$ which give back $\psi$ after extension of scalars. 


\bigskip

Fix  an isomorphism $\C \cong \overline{\Q}_{\ell}$ and identify $H^*_c(\widetilde{X},\C) \cong H^*_c(\widetilde{X},\overline{\Q}_{\ell})$ through this isomorphism. Recall that, there exists $U \subseteq \spec(R)$ such that, if $\Imm(f) \in U$ we have a natural isomorphism \begin{equation}
 \label{comparisonisom}   
H^*_c(\widetilde{X}^f) \cong H^*_c(\widetilde{X}) \end{equation} which preserves weight filtration on both sides.

Indeed, let $\sigma:\widetilde{X}_R \to \spec(R)$ be the structural morphism. The complex $\sigma_!\overline{\Q}_{\ell}$ is constructible, see for instance \cite[Chapter 2]{Deligne-etale}. In particular, there exists a non-empty open $U \subseteq \spec(R)$ on which  $\sigma_!\overline{\Q}_{\ell}$ is constant. Denote by $\xi:\spec(\C) \to \spec(R)$ the (geometric) generic point of $\spec(R)$ coming from the embedding $R \subseteq \C$ and, for any $f:R \to \F_q$, denote by $\xi_f:\spec(\overline{\F}_q) \to \spec(R)$ the corresponding geometric point. 

If $\Imm(\xi_{f}) \in U$, from the fact that $\sigma_!\overline{\Q}_{\ell}$ is constant and from the proper base change theorem, we have the following chain of isomorphisms:
\begin{equation}
\label{isomorphism-cohomology-constant}
     H^{\bullet}_c(\widetilde{X}^f) \cong (\sigma_!\overline{\Q}_{\ell})_{\xi_f} \cong (\sigma_!\overline{\Q}_{\ell})_{\xi} \cong H^{\bullet}_c(\widetilde{X}) .\end{equation}

The results of \cite[Theorem 14]{Deligne} show that the isomorphism (\ref{comparisonisom}) preserves the weight filtration on both sides. We may choose $R$ as above such that we have the isomorphism (\ref{comparisonisom}) for all ring homomorphisms $f: R \to \F_q$.
\bigskip

 The decomposition theorem for the map $\psi: \widetilde{X} \to X$ implies that we have a (non-canonical) splitting  \begin{equation}\label{DT}
 \psi_!(\overline{\Q}_{\ell}) \cong \bigoplus_{a \in \Z}\mathcal{P}_a[-a],\end{equation} where each $\mathcal{P}_a$ is a (semisimple) perverse sheaf over $X$. We have thus an identity $$H^{\bullet}_c(\widetilde{X}) \cong \bigoplus_{a \in \Z}H^{\bullet-a}_c(X,\mathcal{P}_a) .$$
  
 For each $a \in \Z$, put $\Gr^P_aH^{\bullet}_c(\widetilde{X})\coloneqq H^{\bullet-a}_c(X,\mathcal{P}_a)$.

 Since each $\mathcal{P}_a$ is semisimple, we can refine the decomposition (\ref{DT})  writing for each $a$, $$\mathcal{P}_a \cong \mathcal{P}_{a,j_{a,1}} \oplus \cdots \oplus \mathcal{P}_{a,j_{a,s_a}}$$ where $\mathcal{P}_{a,h}$ are simple perverse sheaves. Recall that, for a simple perverse sheaf $\mathcal{P}$, there is an associated irreducible closed subvariety $\Supp(\mathcal{P})$ called its \textit{support}. Put $\Supp(\psi)\coloneqq \{\Supp(\mathcal{P}_{a,h})\}_{a,j}$.

For each $T \in \Supp(\psi)$, let $$\Gr^{\bullet,P}_{a,T}\coloneqq \bigoplus_{\Supp(\mathcal{P}_{a,j})=T} H^{\bullet-a}_c(X,\mathcal{P}_{a,j}) .$$ Notice that we have an isomorphism which preserves the weight filtration \begin{equation}
\label{DT2}
IH^{\bullet}_c(X)\cong \Gr^{\bullet,P}_{0,X}
\end{equation}

For each $T \in \Supp(\psi)$, De Cataldo and Migliorini \cite[Section 1.3.3]{DCM} introduce a variety $\mathcal{T}$ and a proper map $r_{T}:\mathcal{T} \to \widetilde{X}$ such that
\begin{equation}
\label{DT3}
\Gr^{\bullet,P}_{0,X}=\displaystyle\bigcap_{T \neq X} \Ker(r^*_{T})
\end{equation} 
where $r^*_T:H^{\bullet}_c(\widetilde{X}) \to H^{\bullet}_c(\mathcal{T})$ is the corresponding map in cohomology. We deduce that we have an identification which preserves weight filtration \begin{equation}
    \label{DT4}
    IH^{\bullet}_c(X)=\bigcap_{T \neq X} \Ker(r^*_{T}) \subseteq H^{\bullet}_c(\widetilde{X}).   
\end{equation}
\bigskip
 
Denote by $\psi^f:\widetilde{X}^f \to X^f$ the corresponding map of $\F_q$ varieties and  write the corresponding decomposition $$\psi^f_!(\overline{\Q}_{\ell}) \cong \bigoplus_{a \in \Z}\mathcal{P}^{f}_a[-a],$$ where each $\mathcal{P}^{f}_{a}$ is a semisimple perverse sheaf on $X^f$ defined over $\F_q$.

The arguments of \cite{DCM} show that, for each $a,j$, the subvariety $\Supp(\mathcal{P}_{a,j})$ is defined over the field of fraction of $R$. Restricting to an open subset $V=\spec(R') \subseteq \spec(R)$ if necessary, we can assume that $\Supp(\mathcal{P}_{a,j})$ is defined over $R$ for all $a,j$. The results of \cite[Lemma 6.2.6]{BBD} show that, for each $a,j$, there exists a corresponding simple  perverse sheaf $\mathcal{P}^{f}_{a,j}$ on $X^f$ defined over $\F_q$ such that $$\mathcal{P}^f_{a} \cong \bigoplus \mathcal{P}^f_{a,j} .$$

Moreover, for each $a,j$, we have that $\Supp(\mathcal{P}^f_{a,j})=\Supp(\mathcal{P}_{a,j})^f$ and the latter is defined over $\F_q$. For each $T^f \in \Supp(\psi^f)$, we have a corresponding $\mathcal{T}^f$ and, by a similar argument, we have an identification which preserves weight filtrations \begin{equation}
    \label{DT5}
    IH^{\bullet}_c(X^f)=\displaystyle\bigcap_{T \neq X} \Ker(r^*_{T^f}) \subseteq H^{\bullet}_c(\widetilde{X}^f)
\end{equation} 

This implies that the isomorphism (\ref{comparisonisom}) restricts to an isomorphism \begin{equation}
    \label{comparisonIH}
    IH^{\bullet}_c(X)\cong IH^{\bullet}_c(X^f)
\end{equation}
which preserves weight filtration on both sides.
\end{proof}

\subsection{$W$-equivariance}\label{W-equiv-section}

For more details on this section see \cite{laumonletellier2}. Let $W$ be a finite group acting on the right on a variety $X$. A $W$-\emph{equivariant complex} on $X$ is a pair $(K,\theta)$ with $K\in D_c^b(X)$ and $\theta=(\theta_w)_{w\in W}$ of isomorphisms

$$
\theta_w:w^*(K)\simeq K
$$
such that

(1) $\theta_{ww'}=\theta_w\circ w^*(\theta_{w'})$ for all $w,w'\in W$, and

(2) $\theta_1=1_K$ where $1_K:K\rightarrow K$ is the identity morphism.
\bigskip

If $(K,\theta)$ and $(K',\theta')$ are two $W$-equivariant complexes on $X$, then $W$ acts on ${\rm Hom}(K,K')$ as

$$
w\cdot f=\theta'_w\circ w^*(f)\circ(\theta_w)^{-1}
$$
for all $w\in W$ and $f\in{\rm Hom}(K,K')$. A morphism $(K,\theta)\rightarrow (K',\theta')$ is an element of ${\rm Hom}(K,K')^W$. We denote by $D_c^b(X,W)$ the category of $W$-equivariant complexes on $X$.
\bigskip

\begin{oss}
\label{remark-trivial-sheaf}
The constant sheaf $\kappa$ has a canonical $W$-equivariant structure $\theta$, coming from the canonical isomorphisms $$\theta_w:w^*(\kappa) \to \kappa .$$
\end{oss}

\bigskip
Assume given a $W$-invariant smooth open $U \subseteq X$. For any $W$-equivariant local system $(\mathcal{F},\theta)$, the object $\IC^{\bullet}_{X,\mathcal{L}}$ has a corresponding $W$-equivariant structure $\alpha$ which extends $\theta$.
Notice that, in particular, from Remark \ref{remark-trivial-sheaf}, the intersection cohomology complex $\IC_{X}^{\bullet}$ has a canonical $W$-equivariant structure.

\bigskip

If $W$ acts trivially on $X$, then a $W$-equivariant complex on $X$ is  a pair $(K,\tilde{\theta})$ where $\tilde{\theta}$ is an action of $W$ on $K$, i.e. $\tilde{\theta}$ is a group homomorphism

$$
\tilde{\theta}: W\rightarrow{\rm Aut}(K).
$$

Then we have a decomposition

$$
K=\bigoplus_{\chi\in\widehat{W}}K(\chi)
$$
where $K(\chi)\rightarrow K$ is the kernel of the idempotent $1-e(\chi)\in{\rm End}(K)$ with

$$
e(\chi)=\frac{\chi(1)}{|W|}\sum_{w\in W}\overline{\chi(w)}\,\tilde{\theta}(w).
$$

\bigskip

\begin{remark}
\label{equiv-pushforward-remark}
Let $X$ and $Y$ be two varieties, with a $W$-action on $X$ (on the right) and $f:X \to Y$ a $W$-invariant morphism, i.e. $f( x\cdot w)=f(x)$ for each $x \in X$ and $w \in W$.

Given $(\mathcal{F},\theta) \in D^b_c(X,W)$, the complex $f_*(\mathcal{F})$ is endowed with a $W$-action $$\tilde{\theta}:W\rightarrow{\rm Aut}(f_*(\mathcal{F}))$$
as follows. Notice that, for any $w \in W$, we have $w^*(\mathcal{F})=(w^{-1})_*\mathcal{F}$. We thus have
\begin{equation}
\label{equiv-identi}
    f_*w^*(\mathcal{F})=f_*(w^{-1})_*(\mathcal{F})=(f \circ w^{-1})_*(\mathcal{F})=f_*(\mathcal{F})
,\end{equation} since $f \circ w^{-1}=f$. Define thus $\tilde{\theta}_w \in \Aut(f_*\mathcal{F}) $ as 
$$\tilde{\theta}(w) \coloneqq f_*(\theta_w),$$
through the identifications (\ref{equiv-identi}) above. 
\end{remark}

\bigskip

Assume now that $K=\overline{\F}_q$ and let $F:X\rightarrow X$ be a geometric Frobenius which commutes with the action of $W$. Let $K\in D_c^b(X)$ be equipped with an action $\tilde{\theta}:W\rightarrow{\rm Aut}(K)$. Assume given an $F$-equivariant structure $\varphi:F^*(K)\rightarrow K$ such that the following diagram commutes for all $w\in W$

\begin{equation}
\xymatrix{F^*(K)\ar[d]^{\varphi}\ar[rr]^{F^*(\tilde{\theta}(w))}&&F^*(K)\ar[d]^{\varphi}\\
K\ar[rr]^{\tilde{\theta}(w)}&&K}
\label{compat}\end{equation}

Then $\varphi$ restricts to an $F$-equivariant structure

$$
\varphi(\chi):F^*(K(\chi))\rightarrow K(\chi)
$$
for all $\chi\in \widehat{W}$ and we have

$$
{\bf X}_{K,\varphi}=\sum_{\chi\in \widehat{W}}{\bf X}_{K(\chi),\varphi(\chi)}.
$$

\begin{remark}If $W$ is abelian then

\begin{equation}
{\bf X}_{K,\tilde{\theta}(w)\circ\varphi}=\sum_{\chi\in\widehat{W}}\chi(w)\, {\bf X}_{K(\chi),\varphi(\chi)}
\label{W-AB}\end{equation}
and by the orthogonality relation we have

\begin{equation}
{\bf X}_{K(\chi),\varphi(\chi)}=\frac{1}{|W|}\sum_{w\in W}\chi(w){\bf X}_{K,\tilde{\theta}(w)\circ\varphi}.
\label{invert}\end{equation}
\end{remark}

\subsection{Equivariant category and quotient stacks}

Given a finite group $W$ acting on the right on a variety $X$, we can reformulate the definition and properties of $W$-equivariant objects on $X$ in the language of quotient stacks. Let $[X/W]$ be the quotient stack of $X$ by $W$.
\bigskip

We denote by $\pi^X_W:X \to [X/W]$ the canonical projection map. Recall that this map is a Galois covering with  Galois group $W$. If $X=\spec(K)$, we put $B(W)\coloneqq[\spec(K)/W]$ for the classifying space of $W$-torsors and we put simply $\pi_W:\spec(K) \to B(W)$.

\bigskip

The pullback $(\pi^X_W)^*$ induces equivalences of categories 

$$\Perv(X,W) \cong \Perv([X/W]),$$ $$D^b_c(X,W) \cong D^b_c([X/W]).$$

More generally, for any subgroup $W' \subseteq W$, we have a canonical map 

$$\pi^X_{W',W}:[X/W'] \to [X/W].$$

\bigskip

\begin{esempio}
\label{classifying}
If $X=\spec(K)$, a $W$-equivariant perverse sheaf is a perverse sheaf on the point $\spec(K)$ with an action on $W$. A perverse sheaf on $\spec(K)$ is a finite-dimensional $\kappa$-vector space and we have thus an equivalence of categories 

\begin{equation}\label{equivalence-categories}
\Perv(B(W))=\Perv(\spec(K),W)=\Rep_{\kappa}(W) .\end{equation} Therefore, for any irreducible character $\chi \in \widehat{W}$, we have an associated irreducible local system $\mathcal{L}^{B(W)}_{\chi}$ on $B(W)$. 

For any subgroup $W' \subseteq W$ and the corresponding map $\pi_{W',W}:B(W') \to B(W)$, through the identifications \ref{equivalence-categories}), the functor $(\pi_{W',W})_*:\Perv(B(W')) \to \Perv(B(W))$ corresponds to the functor $\Ind_{W'}^W:\Rep_{\kappa}(W') \to \Rep_{\kappa}(W)$.

Since $\Ind_{\{e\}}^W(1)=\kappa[W]$, we have an isomorphism of perverse sheaves on $B(W)$ 

\begin{equation}\label{decomposition-BW}
(\pi_{W})_*(\kappa) \cong \bigoplus_{\chi \in \widehat{W}}V_\chi\otimes \mathcal{L}^{B(W)}_{\chi}   
\end{equation}
\end{esempio}
where $V_\chi$ denotes an irreducible $\kappa$-module affording the character $\chi$ and $\mathcal{L}^{B(W)}_\chi$ the irreducible local system on $B(W)$ introduced above.

Moreover

$$
\left((\pi_W)_*(\kappa)\right)(\chi)=V_\chi\otimes \mathcal{L}^{B(W)}_\chi.
$$
\bigskip

Assume given another variety $Y$ with a $W$-action and a $W$-equivariant morphism $f:X \to Y$. Then we have a canonical morphism $\overline{f}:[X/W] \to [Y/W]$. In particular, for any $X$, we have a canonical morphism $\psi_X:[X/W] \to B(W)$. For any $\chi \in \widehat{W}$, we put 

\begin{equation}\label{Lchi}
\mathcal{L}^{[X/W]}_\chi\coloneqq \psi_X^*(\mathcal{L}^{B(W)}_{\chi})
\end{equation}
which is a local system on $[X/W]$.

Notice that, more precisely, forgetting the equivariant structure, $\mathcal{L}^{[X/W]}_\chi$ is a constant sheaf on $X$ of rank $\deg(\chi)$. It is however irreducible as an object in $\Perv(X,W)$.

\bigskip

Since the diagram $$\xymatrix{X\ar[d]^{\pi_W^X}\ar[rr]^{}&&\spec(K)\ar[d]^{\pi_W}\\[X/W]\ar[rr]^{\psi_X}&&B(W)}$$ is cartesian, from the proper base change theorem, we have an isomorphism  \begin{equation}
    \label{decomposition-equivariant}
    (\pi_W^X)_*(\kappa) \cong \bigoplus_{\chi \in \widehat{W}}V_\chi\otimes \mathcal{L}^{[X/W]}_\chi.
\end{equation}

Finally, if $(K,\theta)\in D_c^b(X,W)$ with corresponding complex $\overline{K}\in D_c^b ([X/W])$, then by the projection formula

\begin{equation}\label{pushequiv}
(\pi_W^X)_*(K,\theta)=\bigoplus_{\chi\in\widehat{W}}V_\chi\otimes K^{[X/W]}_\chi   
\end{equation}
where $K^{[X/W]}_\chi:=\overline{K}\otimes \mathcal{L}^{[X/W]}_\chi$.
\bigskip

\subsection{Mackey formula for finite quotient stacks}
\label{finitequotientstacks}
Fix a subgroup $W' \subseteq W$ and a simple perverse sheaf $\mathcal{F}$ on $X$ equipped with a $W'$-equivariant structure. Assume also that for any $w \in W \setminus W'$, we have  $w^*(\mathcal{F}) \not \cong \mathcal{F}$. 

The aim of this section is to prove the following result.

\begin{prop} We have a natural decomposition
$$
(\pi^X_W)_*(\mathcal{F})=\bigoplus_{\chi\in\widehat{W'}}V_\chi\otimes \mathcal{F}^{[X/W]}_\chi
$$
 for some distinct simple perverse sheaves $\mathcal{F}^{[X/W]}_\chi$ indexed by the irreducible characters of $W'$. 
\end{prop}
\bigskip

We have 

$$
(\pi^X_W)_{*}(\mathcal{F})=(\pi^X_{W',W})_*((\pi^X_{W'})_{*}(\mathcal{F})).
$$
Since $\mathcal{F}$ is $W'$-equivariant, we can apply Formula (\ref{pushequiv}) and we have \begin{equation}
(\pi^X_{W'})_{*}(\mathcal{F})= \bigoplus_{\chi \in \widehat{W'}}V_\chi\otimes \mathcal{F}^{[X/W']}_\chi   \end{equation} 
and so

\begin{equation}
(\pi^X_W)_{*}(\mathcal{F}) \cong \bigoplus_{\chi \in \widehat{W'}}V_\chi\otimes \mathcal{F}^{[X/W]}_\chi    
\end{equation}
where $\mathcal{F}^{[X/W]}_\chi:=(\pi^X_{W',W})_*\left(\mathcal{F}^{[X/W']}_{\chi}\right)$.
\bigskip

\begin{lemma}
\label{irreducible-components-finitestack}
For any $\chi \in \widehat{W'}$, the perverse sheaf $\mathcal{F}^{[X/W]}_\chi$ is irreducible. Moreover, if $\chi \neq \chi'$, then

$$
\mathcal{F}^{[X/W]}_\chi\not \cong \mathcal{F}^{[X/W]}_{\chi'}.
$$
\end{lemma}

\begin{proof}
 From adjunction, we have an isomorphism \begin{equation}
\label{hom0}\Hom(\mathcal{F}^{[X/W]}_\chi,\mathcal{F}^{[X/W]}_\chi) \cong \Hom\left((\pi_{W',W})^*(\mathcal{F}^{[X/W]}_\chi),\mathcal{F}^{[X/W']}_\chi\right) .\end{equation}

We now describe the perverse sheaf $(\pi^X_{W',W})^*(\mathcal{F}^{[X/W]}_\chi)$. Fix a representative $s \in W$ for every $\overline{s} \in W'\backslash W/W'$ and put $W'_s\coloneqq W' \cap s^{-1}W's$. We have a cartesian diagram $$\xymatrix{\displaystyle\bigsqcup_{\overline{s}\in W'\backslash W/W'}[X/W_s']\ar[d]^-{g}\ar[rr]^-{f}&&[X/W']\ar[d]^{\pi_{W',W}^X}\\[X/W']\ar[rr]^{\pi_{W',W}^X}&&[X/W]}\label{diagramMackey}.$$

Where the maps $f,g$ are defined as follows. We have $$f=\bigsqcup_{\overline{s} \in W'\backslash W/W'} \pi^X_{W_s',W'} .$$ For any $\overline{s}$, consider the embedding of groups $W_s' \to W'$ which sends $w \to sws^{-1}$. The latter embedding induces a morphism of quotient stacks $g_{\overline{s}}:[X/W_s'] \to [X/W']$ and we have $$g=\bigsqcup_{\overline{s} \in W'\backslash W/W'} g_{\overline{s}} .$$

By the proper base change theorem, we thus have an isomorphism \begin{equation}
(\pi^X_{W',W})^*(\mathcal{F}^{[X/W]}_\chi) \cong g_*f^*(\mathcal{F}^{[X/W']}_\chi)     
\end{equation}

and thus an isomorphism \begin{equation}
\Hom\left((\pi^X_{W',W})^*(\mathcal{F}^{[X/W]}_\chi),\mathcal{F}^{[X/W']}_\chi\right) \cong \Hom\left(g_*f^*(\mathcal{F}^{[X/W']}_\chi) ,\mathcal{F}^{[X/W']}_\chi\right).    
\end{equation}

Moreover, since $p$ is an étale morphism, from base change, $g$ is étale too and we have thus $g^*=g^{!}$. Therefore, we have \begin{equation}
    \Hom\left(g_*f^*(\mathcal{F}^{[X/W']}_\chi),\mathcal{F}^{[X/W']}_\chi\right) \cong \Hom\left(f^*(\mathcal{F}^{[X/W']}_\chi),g^*(\mathcal{F}^{[X/W']}_\chi)\right).
\end{equation}

We now describe the two perverse sheaves $f^*(\mathcal{F}^{[X/W']}_\chi)$ and $g^*(\mathcal{F}^{[X/W']}_\chi)$.

\bigskip

Notice that, an element $F$ of $$\displaystyle\Perv\left(\bigsqcup_{\overline{s} \in W'\backslash W/W'}[X/W_s']\right)$$ consists of  a $W'_s$-equivariant perverse sheaf $F_{\overline{s}} \in \Perv([X/W_s'])$ for every $\overline{s} \in W'\backslash W/W'$.

Moreover, under this correspondence, for any two such perverse sheaves $F,F'$  we have \begin{equation}
\label{hom}
    \Hom(F,F') \cong \bigoplus_{\overline{s}\in W'\backslash W/W'}\Hom_{[X/W_s']}(F_{\overline{s}},F'_{\overline{s}}).
\end{equation}
    
\bigskip

We now describe more generally the functors $f^*,g^*$. Given a perverse sheaf $\overline{K}=(K,\theta) \in \Perv([X/W'])$, under the correspondence introduced above, for every $\overline{s} \in W'\backslash W/W'$, we have  $$f^*(\overline{K})_{\overline{s}}=(\pi^X_{W'_s,W'})^*(\overline{K}).$$

\vspace{2 pt}

To describe $g^*$, notice that, for each $\overline{s} \in W'\backslash W/W'$,  the perverse sheaf $(s^{-1})^*(K)$ can be endowed with the $W_s'$-equivariant structure $\theta^{\overline{s}}=(s^{-1})^*(\theta)$, i.e. for $w\in W'_s$ of the form $w=s^{-1}w's$ with $w'\in W'$, we have

$$\theta^{\overline{s}}= (s^{-1})^*(\theta_{w'}) .$$
Then for every $\overline{s} \in W'\backslash W/W'$, we have $$g^{*}(\overline{K})_{\overline{s}}= ((s^{-1})^*K,\theta^{\overline{s}}).$$

If $\overline{K}=\mathcal{F}^{[X/W']}_\chi$, then, by definition of $\mathcal{F}_\chi^{[X/W']}$ (see the end of the previous section), we have$$g^*(\overline{K})_{\overline{s}}=g^*(\overline{\mathcal{F}})_{\overline{s}}\otimes g^*(\mathcal{L}^{[X/W']}_{\chi})_{\overline{s}}$$
where $\overline{\mathcal{F}}$ is the object of $\Perv([X/W'])$ corresponding to $\mathcal{F}$ equipped with its $W'$-equivariant structure.
\bigskip

From Formula (\ref{hom}), we deduce that

\begin{align*}
\Hom\left(f^*(\mathcal{F}^{[X/W']}_\chi)\right.,&\left.g^*(\mathcal{F}^{[X/W']}_\chi)\right)\\ &=\bigoplus_{\overline{s}\in W'\backslash W/W'} \Hom\left((\pi^X_{W'_s,W'})^*(\mathcal{F}^{[X/W']}_\chi),g^*(\overline{\mathcal{F}})_{\overline{s}} \otimes g^*(\mathcal{L}^{[X/W']}_{\chi})_{\overline{s}}\right).    
\end{align*}

Notice that, for any $\overline{s} \in W'\backslash W/W'$, we have that 
$$ (\pi_{W_s'}^X)^*((\pi^X_{W'_s,W'})^*(\mathcal{F}^{[X/W']}_\chi))=\mathcal{F}^{\deg(\chi)}$$ and $$(\pi_{W_s'}^X)^*(g^*(\overline{\mathcal{F}})_{\overline{s}} \otimes g^*(\mathcal{L}^{[X/W']}_{\chi})_{\overline{s}})=(s^{-1})^*(\mathcal{F})^{\deg(\chi)} .$$

Unraveling the definitions, we see that, for each $\overline{s}\in W'\backslash W/W'$, we have an inclusion \begin{equation}
\label{hom2}
\Hom\left((\pi^X_{W'_s,W'})^*(\mathcal{F}^{[X/W']}_\chi),g^*(\overline{\mathcal{F}})_{\overline{s}} \otimes g^*(\mathcal{L}^{[X/W']}_{\chi})_{\overline{s}} \right) \subseteq \Hom\left(\mathcal{F}^{\deg(\chi)},(s^{-1})^*(\mathcal{F})^{\deg(\chi)}\right)    
\end{equation}

For every $\overline{s} \in W'\backslash W/W'$ such that $s \notin W'$, we have  $(s^{-1})^*(\mathcal{F}) \not \cong \mathcal{F}$. Since $\mathcal{F}$ is a simple perverse sheaf, we deduce that $\Hom(\mathcal{F}^{\deg(\chi)},(s^{-1})^*(\mathcal{F})^{\deg(\chi)})=\{0\}$ and thus we have

\begin{equation}
\Hom\left(f^*(\mathcal{F}^{[X/W']}_\chi),g^*(\mathcal{F}^{[X/W']}_\chi)\right) \cong \End\left(\mathcal{F}^{[X/W']}_\chi\right).   
\end{equation}
From Formula (\ref{hom0}), we deduce that \begin{equation}
    \End\left(\mathcal{F}^{[X/W]}_\chi\right)\cong \End\left(\mathcal{F}^{[X/W']}_\chi\right)
\end{equation}
Notice that $\End(\mathcal{F}^{[X/W']}_\chi)=\kappa$, since $\mathcal{F}^{[X/W']}_\chi$ is a simple perverse sheaf on $[X/W']$. From the semisimplicity of $(\pi^X_W)_*(\mathcal{F})$, we deduce that  $\mathcal{F}^{[X/W]}_\chi$ is simple too.   

\bigskip

A similar argument shows that, if $\chi \neq \chi'$, we have 
$$\Hom\left((\pi^X_{W',W})_*(\mathcal{F}^{[X/W']}_{\chi}),(\pi^X_{W',W})_*(\mathcal{F}^{[X/W']}_{\chi'})\right)=\{0\}$$
and thus $\mathcal{F}^{[X/W]}_{\chi}$ and $\mathcal{F}^{[X/W]}_{\chi'}$ are not isomorphic.
\end{proof}
\bigskip

\subsection{Finite maps and intersection cohomology complexes}
\label{finite-maps}
Let $X,Y$ be two equidimensional varieties and let $f:X \to Y$ be a  surjective morphism. Recall that   $f$ is quasi-finite if for each $y \in Y$, the fiber $f^{-1}(y)$ is finite (in which case $\dim X =\dim Y$).

\bigskip

In the rest of the paper, all varieties $X,Y$ and finite maps $f:X \to Y$ will respect the following.
\bigskip

\begin{assum}
\label{assumptionmap}
\begin{itemize}
    \item The varieties $X,Y$ are equidimensional and have the same number of irreducible components.
    \item Given the decomposition into irreducible components $X=X_1 \cup \cdots \cup X_r$ and $Y=Y_1 \cup \cdots \cup Y_r$, we have $f(X_i)=Y_i$ and $f:X_i \to Y_i$ is a finite surjective map for each $i=1,\dots,r$.
\item There exists a finite abelian group $A$ that acts on $X$ and such that $f$ is $A$-invariant.

\item  There exists a smooth open subset $U \subseteq Y$ such that the restriction $f:f^{-1}(U) \to U$ is an $A$-covering.

\end{itemize}

\end{assum}

\bigskip

In this situation, consider an $A$-equivariant local system $(\mathcal{E},\theta)$ over $f^{-1}(U)$. The complex $\IC^{\bullet}_{X,\mathcal{E}}$ is naturally equipped with an  $A$-equivariant structure. As the map $f$ is $A$-invariant,  the local system $f_*(\mathcal{E})$ and the complex $\IC^{\bullet}_{Y,f_*(\mathcal{E})}$) are both equipped with an action of $A$. 

As the restriction of $f$ to $f^{-1}(U)$ is Galois we have

\bigskip

$$f_*(\mathcal{E})=\displaystyle\bigoplus_{\chi \in \widehat{A}}V_\chi\otimes \mathcal{E}_{\chi}^U$$
by Formula (\ref{pushequiv}) and so $$\IC^{\bullet}_{Y,f_*(\mathcal{E})}=\bigoplus_{\chi \in \widehat{A}}V_\chi\otimes \IC^{\bullet}_{Y,\mathcal{E}_{\chi}^U}.$$

\bigskip
We have the following.

\begin{lemma}
\label{corollary-small-map}
Let $f:X \to Y$ be a finite map satisfying  Assumption \ref{assumptionmap} and let $\mathcal{E}$ be an $A$-equivariant local system on $f^{-1}(U)$. We have an isomorphism $$f_*(\IC^\bullet_{X,\mathcal{E}}) \cong \IC^\bullet_{Y,f_*(\mathcal{E})}$$ which respects  the action of $A$.

In particular, for any $\chi \in \widehat{A}$, we have isomorphisms of perverse sheaves \begin{equation}\label{equiv-localsytem-final}
\left(f_*(\IC^\bullet_{X,\mathcal{E}})\right)(\chi) \cong (\IC^\bullet_{Y,f_*\mathcal{E}})(\chi) \cong V_\chi\otimes\IC^\bullet_{Y,\mathcal{E}^U_\chi}\cong \IC^\bullet_{Y,\mathcal{E}^U_\chi}.  
\end{equation}

\end{lemma}

The last isomorphism in (\ref{equiv-localsytem-final}) is resulting from the fact that $A$ is abelian and so $V_\chi$ is one-dimensional.

\bigskip







Assume now that $K=\overline{\F}_q$ and that $X,Y$ are equipped with corresponding geometric Frobenius $F$ which commute with $f$ and with the action of $A$. Let $(\mathcal{E},\theta)$ be an $A$-equivariant local system on $f^{-1}(U)$ equipped  with an $F$-equivariant structure $\varphi:F^*(\mathcal{E})\simeq\mathcal{E}$ such that the following diagram commutes for all $\zeta\in A$

\begin{equation}
\xymatrix{\zeta^*F^*(\mathcal{E})\ar[d]_{\zeta^*(\varphi)}\ar[rr]^{F^*(\theta_\zeta)}&&F^*(\mathcal{E})\ar[d]^\varphi\\
\zeta^*(\mathcal{E})\ar[rr]^{\theta_\zeta}&&\mathcal{E}}
\label{W}\end{equation}
Denote again by $\varphi:F^*(f_*(\mathcal{E}))\simeq f_*(\mathcal{E})$ the $F$-equivariant structure on $f_*(\mathcal{E})$ induced by $\varphi:F^*(\mathcal{E})\simeq\mathcal{E}$. 

Then Diagram (\ref{compat}), with $K=f_*(\mathcal{E})$ and $\tilde{\theta}=f_*(\theta)$, commutes.

The same diagrams with $f_*(\IC^\bullet_{X,\mathcal{E}})=\IC^\bullet_{Y,f_*(\mathcal{E})}$ instead of $f_*(\mathcal{E})$ also commute.
\bigskip

By Formula (\ref{invert}) combined with (\ref{equiv-localsytem-final}) we have

\begin{equation}
{\bf X}_{\IC^\bullet_{Y,\mathcal{E}^U_\chi},\varphi_\chi}=\frac{1}{|A|}\sum_{\zeta\in A}\chi(\zeta)\,{\bf X}_{f_*(\IC^\bullet_{X,\mathcal{E}}),\tilde{\theta}(\zeta)\circ\varphi}
\label{theta}\end{equation}
where $\varphi_\chi$ is the $F$-equivariant structure $\varphi(\chi)$ on $f_*(\IC^\bullet_{X,\mathcal{E}})(\chi)\cong\IC^\bullet_{Y,\mathcal{E}^U_\chi}$.

For each $\zeta \in A$ we have a $\zeta F$-equivariant structure on $\mathcal{E}$

$$
\varphi_\zeta:(\zeta F)^*(\mathcal{E})\rightarrow\mathcal{E}
$$
obtained from Diagram (\ref{W}) by composing $\varphi$ with $F^*(\theta_\zeta)$. Then

\begin{equation}
{\bf X}_{f_*(\IC^\bullet_{X,\mathcal{E}}),\tilde{\theta}(\zeta)\circ\varphi}=(f^{\zeta F})_*({\bf X}_{\IC^\bullet_{X,\mathcal{E}},\varphi_ \zeta})
\label{equa-wF}\end{equation}
where $f^{\zeta F}:X^{\zeta F}\rightarrow Y^F$.

\subsection{Twisted intersection cohomology polynomials}\label{twisted}

Assume that $X$ is a $K$-variety endowed with an action of a finite group $W$. If $K=\overline{\F}_q$ and $F:X\rightarrow X$ is a geometric Frobenius, we assume that the action of $W$ commutes with $F$.

\bigskip

Recall that, for each $w \in W$, we have a canonical isomorphism $$\alpha_w:w^*(\IC^\bullet_X) \to \IC^\bullet_X ,$$ from which we get an action of $W$ on the intersection cohomology groups $IH^{\bullet}_c(X)$. The group action preserves the weight filtration on $IH_c^\bullet(X)$ and for $w\in W$, we define the $w$-twisted mixed Poincar\'e polynomial for the intersection cohomology as

$$
IH_c^w(X;q,t):=\sum_{i,k}{\rm Tr}\left(w\mid W^k_i/W^k_{i-1}\right)\, q^{i/2}t^k.
$$

If $X/_{\overline{\F}_q}$ and we have a geometric Frobenius $F:X \to X$, we define $\phi^w:(wF)^*\IC^\bullet_X \to \IC^\bullet_X$ as follows \begin{center}
    \begin{tikzcd}
     F^*w^*(\IC^\bullet_X)=w^*F^*(\IC^\bullet_X) \arrow[bend left=30]{rr}{\phi^{w}} \arrow[r,"w^*(\phi)"] &w^*(\IC^\bullet_X) \arrow[r,"\alpha_w"] &\IC^\bullet_X   
    \end{tikzcd}
\end{center}

Similarly, define $(\phi^n)^w:(wF^n)^*\IC^\bullet_X \to \IC^\bullet_X$ as $(\phi^n)^w \coloneqq \alpha_w \circ w^*(\phi^n)$. If $K=\overline{\F}_q,$ we say that the pair $(X/_{\overline{\F}_q}, W)$ has the IC-polynomial property  with (twisted) IC-polynomials $\{P_w(t)\}_{w\in W}$ in $\mathbb{Z}[t]$ if

$$
\sum_{x \in X^{wF^r}}\X_{\IC^\bullet_X,(\phi^r)^w}=P_w(q^r).
$$
for all integers $r \geq 1$.

We have the following twisted analogue of Theorem \ref{countingfq}.

\begin{teorema}
\label{theoremtwistedEFq}
If $(X/_{\overline{\F}_q},W)$ has the (twisted) IC-polynomial property with (twisted) IC-polynomials $\{P_w(t)\}_{w\in W}$ for any $w\in W$ we have

\begin{equation}
\label{katztwistedformula}
IE^w(X;q)=P_w(q).
\end{equation}
where $IE^{w}(X;q)\coloneqq IH^w_c(X;q,-1)$.
\end{teorema}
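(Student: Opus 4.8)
The plan is to mimic the proof of Theorem \ref{Katz} sketched just above, replacing the comparison isomorphism argument by its $W$-equivariant version. First I would recall the setup: we have an $\F_q$-scheme $X_o$ with a $W$-action commuting with $F$, and we want to relate $|X^{wF^r}|$ to the $w$-twisted cohomology. The starting point is the twisted Grothendieck–Lefschetz trace formula: for each $w\in W$ and each $r\geq 1$,
$$
|X^{wF^r}| = \sum_k (-1)^k \tr\left(wF^r \,\middle|\, H^k_c(X)\right),
$$
which holds because $wF^r$ is again the Frobenius of an $\F_{q^r}$-structure on $X$ (namely the one twisted by $w$), so this is just the ordinary Lefschetz formula applied to that structure. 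Grouping the eigenvalues of $F$ on $H^k_c(X)$ according to the graded pieces $W^k_i/W^k_{i-1}$ of the weight filtration — on which $F$ acts with eigenvalues of absolute value $q^{i/2}$ and which are $W$-stable since the $W$-action preserves the weight filtration and commutes with $F$ — and using that $w$ and $F$ commute, I would write $\tr(wF^r\mid W^k_i/W^k_{i-1})$ as a sum of terms $\zeta\, (\text{alg.\ number of abs.\ value } q^{ir/2})$.

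Next I would invoke the polynomial-count hypothesis $|X^{wF^r}| = P_w(q^r)$ for all $r\geq 1$. The key point is that the function $r\mapsto P_w(q^r)$ is a $\Z$-linear combination of functions $r\mapsto q^{jr}$, while the right-hand side of the trace formula is a $\overline{\Q}_\ell$-linear combination of functions $r\mapsto \beta^r$ where the $\beta$ range over the Frobenius eigenvalues, each lying in the graded piece of some weight $i$ and hence having absolute value $q^{i/2}$. By linear independence of the characters $r\mapsto \beta^r$ (distinct $\beta$'s give linearly independent functions of $r$), matching the two expansions forces the only surviving $\beta$'s to be of the form $q^{i/2}$ with $i$ even, occurring with the appropriate multiplicity; comparing coefficients of $q^{ir/2}=(q^{i/2})^r$ on both sides gives
$$
\sum_k (-1)^k \tr\left(w \,\middle|\, W^k_i/W^k_{i-1}\right) q^{i/2} = [\text{coefficient of } q^{i/2} \text{ in } P_w],
$$
so that $P_w(q) = \sum_{i,k}(-1)^k \tr(w\mid W^k_i/W^k_{i-1}) q^{i/2}$. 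Since in our situation $W^k_i/W^k_{i-1}\neq 0$ only for $i$ even (as noted in the paper for the untwisted case, and the same is inherited here), the sign $(-1)^k$ can be absorbed, giving exactly $E^w(X;q) = H^w_c(X;q,-1) = P_w(q)$.

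The alternative, and perhaps cleaner, route is to deduce this directly from the equivariant version of the constructibility argument in Theorem \ref{Katz}: spread $X_o$ and its $W$-action out over a finitely generated base, use that $Rf_!\overline{\Q}_\ell$ is constructible and hence locally constant on a dense open, so that the $W$-action on the stalks $H^*_c(X^\phi)$ is identified with the $W$-action on $H^*_c(X_{\overline{\F}_q})$ compatibly; then apply the already-established untwisted Theorem \ref{theoremtwistedEFq}-free input, namely that $wF$ is a Frobenius and Theorem \ref{countingfq}/\ref{Katz} applies to the twisted $\F_q$-form. I would present the trace-formula argument as the main line since it is self-contained given what is in the excerpt. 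The main obstacle is purely bookkeeping: making sure the weight filtration is genuinely $wF$-stable and that "eigenvalues of absolute value $q^{m/2}$" is the correct characterization of $W^k_m$ even after twisting by $w$ — this is fine because $w$ has finite order and hence acts with eigenvalues that are roots of unity, so twisting by $w$ does not change absolute values of the eigenvalues of $F^r$, and the weight filtration defined via $F$ is automatically $w$-stable since $w$ commutes with $F$ and is defined over $\F_q$ in the relevant sense.
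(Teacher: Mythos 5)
Your main line coincides with the paper's proof: the paper likewise applies the twisted Grothendieck--Lefschetz trace formula to $wF^r$, decomposes the trace along the weight filtration (using that $w$ and $F$ commute so the graded pieces are $w$-stable), and matches the resulting exponential expansion against $P_w(q^r)$ for all $r$; the paper packages the final extraction step via a citation to \cite[Lemma 2.9]{LRV}, whereas you invoke linear independence of the functions $r\mapsto\beta^r$ directly, which is the same idea. One correction to your side remark, though: the ``alternative, cleaner route'' of treating $wF$ as the Frobenius of a twisted $\F_q$-form and applying Theorem~\ref{countingfq} to it does \emph{not} work, precisely because the polynomial-count hypothesis controls $|X^{wF^r}|$ and not $|X^{(wF)^r}|$, and these differ once $r>1$; this is exactly the pitfall the paper flags in the Remark immediately preceding the Theorem, with the explicit example $X=\overline{\F}_q^*$, $W=\bm\mu_2$, where $|X^{\sigma F^r}|=q^r+1$ for all $r$ while $|X^{(\sigma F)^r}|$ alternates with the parity of $r$.
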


The proof of this Theorem is very similar to that of \cite[Theorem 2.8]{LRV}. We give it below after Theorem \ref{theoremtwistedE}.

\bigskip

\begin{oss}
Given $(X/_{\overline{\F}_q},W)$ as above, for any  $w \in W$, the map $wF:X \to X$ is a Frobenius morphism and gives thus another $\F_q$-structure of the $\overline{\F}_q$-variety $X$.

In particular, there exists an $\F_q$-scheme $X_w$ with an isomorphism $X_w \times_{\F_q} \overline{\F}_q \cong X$ such that, through this isomorphism, the geometric Frobenius $F_w$ of $X_w$ is identified with $wF$.

In general, the polynomial $P_w(t)$ is not the IC-polynomial of $X_w$ with the geometric Frobenius $F_w$.

\vspace{4 pt}

For a concrete example, consider, $X=\overline{\F}_q^*$ ($q$ odd) with $F(z)=z^q$ and $W=\bm \mu_2=\{1,\sigma\}$, with the action $\sigma \cdot x=x^{-1}$. The pair $(\overline{\F}_q^*,\bm \mu_2)$ has polynomial count with counting polynomials $P_1(t)=(t-1)$ and $P_{\sigma}(t)=(t+1)$. Indeed, we have $\sigma F(x)=x^{-q}$ and thus $X^{\sigma F}=\bm \mu_{q+1}$. Notice that, if $4$ does not divide $q+1$, i.e. if $-1$ is not a square in $\F_q^*$, we can consider $X_{\sigma}=\spec(\F_q[s,t]/(s^2+t^2=1))$, with the isomorphism $X_{\sigma} \times_{\F_q}\overline{\F}_q \to \overline{\F}_q^*$ given by $(s,t) \to s+it$.

We have that $$\#X_{\sigma}(\F_{q^r})=\begin{cases}
q^r+1 \text{ if } r \text{ is odd}\\
q^r-1 \text{ if } r \text{ is even}
\end{cases} $$
\end{oss}

\bigskip

If $K=\C$, we say that $(X/_{\C},W)$ has the (twisted) IC-polynomial property with (twisted) IC-polynomials $\{P_w(t)\}_{w\in W}$  if there exists a finitely generated $\mathbb{Z}$-subalgebra $R$ of $\C$, a separated  $R$-scheme $X_R$ equipped with a $W$-action which gives back $X$ with its $W$-action after scalar extensions from $R$ to $\C$, such that for any ring homomorphism $f: R \to \mathbb{F}_q$,  $(X^f,W)$ has the (twisted) IC-polynomial property with (twisted) IC-polynomials $\{P_{w}(t)\}_{w \in W}$.

\vspace{2 pt}

\bigskip

We have the following twisted version of Theorem \ref{Katz}.

\begin{teorema}
\label{theoremtwistedE}
Assume that $(X/_{\C},W)$ has the (twisted) IC-polynomial property with (twisted) IC-polynomials $\{P_w(t)\}_{w\in W}$. Then for any $w\in W$ we have

\begin{equation}
\label{katztwistedformulaC}
IE^w(X;q)=P_w(q).
\end{equation}
\end{teorema}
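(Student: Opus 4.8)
The plan is to mimic the untwisted argument sketched above for Theorem \ref{Katz}, carrying along the $W$-action throughout and taking traces of $w$ at the end. First I would set up the comparison isomorphism in the equivariant setting. Let $f\colon X_R\to\spec(R)$ be the structural morphism of the $W$-equivariant $R$-scheme provided by the hypothesis. As before, $Rf_!\overline{\Q}_\ell$ is constructible, so there is a non-empty open $U\subseteq\spec(R)$ over which it is lisse (in fact constant), and the $W$-action on $X_R$ induces an action of $W$ on the locally constant sheaf $Rf_!\overline{\Q}_\ell|_U$, hence on each stalk. For the generic geometric point $\xi\colon\spec(\C)\to\spec(R)$ coming from $R\subseteq\C$, and for any $\varphi\colon R\to\F_q$ with $\Imm(\xi_\varphi)\in U$, proper base change gives $W$-equivariant isomorphisms
$$
H^*_c(X^\varphi)\;\cong\;(Rf_!\overline{\Q}_\ell)_{\xi_\varphi}\;\cong\;(Rf_!\overline{\Q}_\ell)_{\xi}\;\cong\;H^*_c(X),
$$
and by \cite[Theorem 14]{Deligne} these preserve the weight filtration on both sides. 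Taking the trace of $w$ on $W^k_i/W^k_{i-1}$ and summing, this yields the equality of twisted mixed Poincaré polynomials $H_c^w(X;q,t)=H_c^w(X^\varphi;q,t)$ for all $w\in W$, and in particular $E^w(X;q)=E^w(X^\varphi;q)$ after setting $t=-1$.

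It then remains to identify $E^w(X^\varphi;q)$ with $P_w(q)$; this is exactly the content of Theorem \ref{theoremtwistedEFq} applied to $(X^\varphi,W)$, whose hypotheses hold by the definition of "$(X/_\C,W)$ has polynomial count" (the reductions $(X^\varphi,W)$ have the same twisted counting polynomials $\{P_w(t)\}$). Combining the two displayed chains of equalities gives $E^w(X;q)=E^w(X^\varphi;q)=P_w(q)$, which is \eqref{katztwistedformulaC}.

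The one genuinely delicate point is making sure the $W$-action really is compatible with the comparison isomorphism, i.e. that the abstract isomorphism between $H^*_c(X^\varphi)$ and $H^*_c(X)$ coming from specialisation of the constructible complex $Rf_!\overline{\Q}_\ell$ is $W$-equivariant. This is where one must be slightly careful: the cleanest way is to note that $W$ acts on the whole morphism $f$ over $\spec(R)$, hence on $Rf_!\overline{\Q}_\ell$ as a complex on $\spec(R)$, so shrinking $U$ to trivialise the complex the cospecialisation maps $(Rf_!\overline{\Q}_\ell)_{\xi_\varphi}\to(Rf_!\overline{\Q}_\ell)_{\xi}$ are automatically $W$-equivariant, being induced by the $W$-equivariant identification with the constant value on $U$. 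The weight-filtration compatibility is then a formal consequence, since $W$ acts by morphisms of mixed Hodge structures (resp. of Frobenius modules). Everything else is a routine transcription of the untwisted proof, so I would keep the write-up short and simply point to the proof of Theorem \ref{Katz} for the parts that are unchanged.
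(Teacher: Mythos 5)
Your proposal is correct and follows essentially the same route as the paper: reduce to Theorem \ref{theoremtwistedEFq} by showing the comparison isomorphism $H^*_c(X^\varphi)\cong H^*_c(X)$ is $W$-equivariant (because the $W$-action is defined over $R$, acting on the constructible complex $Rf_!\overline{\Q}_\ell$) and preserves weights, whence $H^w_c(X;q,t)=H^w_c(X^\varphi;q,t)$. Your extra paragraph spelling out why the cospecialisation maps are $W$-equivariant is a harmless elaboration of a point the paper simply asserts.
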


\vspace{2 pt}

Theorem \ref{theoremtwistedE} above can be deduced from Theorem \ref{theoremtwistedEFq} as follows. Consider a variety $X/_{\C}$ and $R$ as above and $U \subseteq \spec(R)$ as in Theorem \ref{theorem-ICextension}. 

Take a $W$-equivariant resolution of singularities $\widetilde{X} \to X$ defined over $V \subseteq \spec(R)$ as in the proof of Theorem \ref{theorem-ICextension}. From the $W$-equivariance of $\widetilde{X} \to X$, we see that all the constructions in the proof of Theorem \ref{theorem-ICextension} are $W$-equivariant and, in particular, the isomorphism (\ref{comparisonIH}) commute with the $W$-action on both sides.

Moreover, since the action of $W$ on $IH_c^{\bullet}(X)$ is defined over the rationals, through the isomorphism (\ref{comparisonIH}) , we have $$\tr\left(\,w\mid W_{i}^kIH^k_c(X)/W^k_{i-1}IH^k_c(X)\right)=\tr\left(w\mid W_{i}^kIH^k_c(X^f)/W^k_{i-1}IH^k_c(X^f)\right)$$ and so $$IH^{w}_c(X;q,t)=IH^{w}_c(X^f;q,t) $$ from which we get $$IE^w(X;q)=IE^w(X^f;q) .$$

\begin{proof}[Proof of Theorem \ref{theoremtwistedEFq}]
\label{prooftwistedEFq}
By the trace formula, for any $r$, we have \begin{equation}
\label{grothendiecktrace}
    P_w(q^r)=\sum_{x \in X^{wF^r}}\X_{\IC^\bullet_X,(\phi^r)^w}=\sum_{k}(-1)^k \tr\left(wF^r\,|\,IH^k_c(X,\overline{\Q}_{\ell})\right) .\end{equation}

Let $\lambda_{i,k,1}q^{\frac{i}{2}},\dots,\lambda_{i,k,s_{k,i}}q^{\frac{i}{2}}$ and $\alpha^w_{i,k,1},\dots,\alpha^w_{i,k,s_{k,i}}$ be the eigenvalues, counted with multiplicities, of $F$ and $w$ on $W^k_i/W^k_{i-1}$. Since $w$ and $F$ commute, up to reordering, we can assume that, for any $r \geq 1$, 

$$\tr\left(wF^r\,|\,W^k_{i}/W^k_{i-1}\right)=\sum_{h=1}^{s_{k,i}}\alpha^w_{i,k,h}(\lambda_{i,k,h})^rq^{\frac{ri}{2}}$$ and thus $$\sum_{k}(-1)^k \tr\left(wF^r\,|\,IH^k_c(X)\right)=\sum_{i}\left(\sum_k (-1)^k \sum_{h=1}^{s_{k,i}}\alpha^w_{i,k,h}(\lambda_{i,k,h})^r\right)q^{\frac{ir}{2}}  .$$ 

If $\displaystyle P_w(t)=\sum_{i}t^i c_{w,i}$, from Formula (\ref{grothendiecktrace}) we deduce that 
\begin{equation}
\sum_k (-1)^k \sum_{h=1}^{s_{k,i}}\alpha^w_{i,k,h}(\lambda_{i,k,h})^r=\begin{cases}
c_{w,\frac{i}{2}} \text{ if } i \text{ is even }\\
0 \text{ otherwise}
\end{cases}.
\end{equation}

From \cite[Lemma 2.9]{LRV}, we deduce that we have $$\sum_k (-1)^k \sum_{h=1}^{s_{k,i}}\alpha^w_{i,k,h}=\begin{cases}
c_{w,\frac{i}{2}} \text{ if } i \text{ is even }\\
0 \text{ otherwise}
\end{cases}.$$ Since $\displaystyle \sum_k (-1)^k \sum_{h=1}^{s_{k,i}}\alpha^w_{i,k,h}=\sum_k (-1)^k \tr(w\mid W^{k}_{i}/W^k_{i-1})$, we have the desired equality (\ref{katztwistedformula}).

\end{proof}

\section{Partitions, types and conjugacy classes of $\mathrm{GL}_n$}

\subsection{Partition and types}\label{type}

Let $\mathcal{P}$ be the set of all partitions and $\mathcal{P}^* \subseteq \mathcal{P}$ the subset of nonzero partitions. A partition $\lambda$ will be denoted by $\lambda=(\lambda_1,\lambda_2 \dots ,\lambda_h)$ with $\lambda_1 \geq \lambda_2 \geq \dots \geq \lambda_h$ or  by $\lambda=(1^{m_1},2^{m_2},\dots )$ where $m_k$ is the number of occurrances of the number $k$ in the partition $\lambda$. We will denote by $\lambda'$ the partition conjugate to $\lambda$.

The \textit{size} of $\lambda$ is $\displaystyle |\lambda|=\sum_{i} \lambda_i$ and its length $l(\lambda)$ is the largest $i$ such that $\lambda_i \neq 0$. For each $n \in \N$, we denote by $\mathcal{P}_n$ the subset of partitions of size $n$. We consider the dominance ordering on $\mathcal{P}$. Say that $\lambda\unlhd \mu$ if, for any $i$, we have $$\sum_{j=1}^i \lambda_i \leq \sum_{j=1}^i \mu_i .$$
\bigskip

A \emph{type} $\omega$ is a function $\omega:\N_{>0} \times \mathcal{P}^* \to \N$ with finite support.

It will be then convenient to write $\omega$ as

$$
\omega=\{(d_i,\omega^i)^{m_i}\}$$
where $m_i\neq 0$ is the image of $(d_i,\omega^i)\in\N_{>0} \times \mathcal{P}^* $ by the function $\omega$.

The size $|\omega|$ of a type $\omega=\{(d_i,\omega^i)^{m_i}\}$ is defined as $$|\omega|=\sum_im_id_i|\omega^i|.$$
We denote by $\mathbb{T}$ the set of types and by $\mathbb{T}_n$ the subset of types of size $n$. We let $\mathbb{T}^{\circ}_n$ be the subset of $\mathbb{T}_n$ of the types of the form $\omega=\{(1,\omega^i)^{m_i}\}$. 

For any $d \in \N_{>0}$ and $\omega=\{(d_i,\omega^i)^{m_i}\} \in \mathbb{T}$, define $$\psi_d(\omega)\coloneqq \{(dd_i,\omega^i)^{m_i}\}.$$

For a positive integer $s \in \N_{>0}$ and $\omega \in \mathbb{T}$, we define $$s \omega :\N_{>0} \times \mathcal{P}^* \to \N $$ $$(d,\lambda) \to s\omega((d,\lambda)) .$$

Given $\omega=\{(d_i,\omega^i)^{m_i}\}$, denote by $\omega'\coloneqq \{(d_i,\omega^{i}{'})^{m_i}\}$ its dual.

\bigskip

\begin{esempio}
The dual type of $\omega=\{(d_i,(1^{n_i}))^{m_i}\}$, is $\omega'=\{(d_i,(n_i))^{m_i}\}$.    
\end{esempio}

\subsection{Conjugacy classes of $\mathrm{GL}_n(K)$ and types}

We start by fixing the following notation. Given $z \in K^*$ and $m \in \N$, denote by $J(z,m)$ the upper Jordan triangular matrix of size $m$ and having $z$ on the diagonal entries. 

 Recall that $\mathcal{P}_n$ is in bijection with the unipotent conjugacy classes in the following way. To $\lambda \in \mathcal{P}_n$, we associate the block diagonal matrix $J(1,\lambda)$ having blocks on the diagonal $(J(1,\lambda_1),\cdots ,J(1,\lambda_h))$. Similarly, for any $z \in K^*$, we denote by $J(z,\lambda)$ the block diagonal matrix having blocks on the diagonal $(J(z,\lambda_1),\cdots ,J(z,\lambda_h))$.

Let $\mathcal{P}(K)$ be the set of maps $f:K^* \to \mathcal{P}$ with finite support. Given $f \in \mathcal{P}(K)$ put $$|f|\coloneqq \sum_{z \in K^*} |f(z)| ,$$ the \textit{size} of a function $f$.
We denote by $\mathcal{P}_n(K) \subseteq \mathcal{P}(K)$ the subset of functions of size $n$. The set $\mathcal{P}_n(K)$ is in bijection with the conjugacy classes of $\Gl_n(K)$ in the following way.

Consider $f \in \mathcal{P}_n(K)$. Let $\Imm(f)=\{\lambda^1,\dots,\lambda^s\}$ and put $I_j=f^{-1}(\lambda^j)$ for each $j=1,\dots,s$. Let $c_j=|I_j|$  and $I_j=\{z_{j,1},\dots,z_{j,c_j}\}$ for each $j=1,\dots,s$. To $f$ we associate now the conjugacy class of the block diagonal matrix $M_f$ having diagonal blocks $$M_f=(J(z_{1,1},\lambda^1),\dots,J(z_{1,c_1},\lambda^1),\dots,J(z_{s,1},\lambda^s),\dots,J(z_{s,c_s},\lambda^s)) .$$



\bigskip

To a conjugacy class $C \subseteq \Gl_n$ with associated function $f \in \mathcal{P}_n(K)$, we associate the following type $\omega_C \in \mathbb{T}^{\circ}_n$ defined as  $$\omega_C((1,\lambda)) \coloneqq |f^{-1}(\lambda)| .$$

\subsection{Conjugacy classes of $\mathrm{GL}_n(\mathbb{F}_q)$ and types}
\label{typesFq}
Recall that the conjugacy classes of $\Gl_n(\F_q)$ are parametrized by types in the following way, see for instance \cite[Paragraph 6.8]{letellier2}. We start by fixing the following notation. For an element $z \in \overline{\F}_q^*$, we let $d_z$ be the size of the $F$-orbit $\{z,z^q,z^{q^2},\dots\}$ of $z$.

\bigskip

We have a bijection $$\{F-\text{stable conjugacy classes of } \Gl_n(\overline{\F}_q)\} \longleftrightarrow \{\text{Conjugacy classes of }\Gl_n(\F_q)\}$$ $$C \mapsto C^F .$$

\bigskip

\begin{oss}
For a linear algebraic group $G/_{\F_q}$, it is not always true that the conjugacy classes of $G(\F_q)$ are in bijection with the $F$-stable conjugacy classes of $G(\overline{\F}_q)$. It is already not the case for $\PGl_n$, see \cref{orbital}.
\end{oss}

\bigskip

Consider thus an $F$-stable conjugacy class $C \subseteq \Gl_n(\overline{\F}_q)$ and the associated function $f \in \mathcal{P}_n(\overline{\F}_q)$. The function $f$ is then $F$-stable, i.e. $f(z)=f(F(z))$ for every $z \in \overline{\F}_q^\times$. In particular, each set $I_j$ is stable for the action of the Frobenius. 


We define the type $\omega_{C^F}$ as the function defined by $$\omega_{C^F}((d,\lambda)) \coloneqq \dfrac{|\{z \in f^{-1}(\lambda) \ | \ d_z=d\}|}{d}$$
(which is the number of Frobenius orbits of $\overline{\F}_q^\times$ of size $d$ contained in the support of $f$).

\begin{oss}
Notice that $\omega_{C^F}\in \mathbb{T}_n^{\circ}$ if and only if all the eigenvalues of $C$ are all contained in $\F_q^\times$. In this case, we will say that $C$ is \textit{split}.
\end{oss}

\subsection{An important example}\label{important}

Let $C$ be a conjugacy class of $\Gl_n(\F_q)$ with eigenvalues in $\F_q^*$ and let $f$ be the associated function $\overline{\F}_q^*\rightarrow\mathcal{P}$. 

We denote by $\omega=\{(1,\omega^i)^{m_i}\}$ the type of $C$. The multiplicity $m_i$ of $(1,\omega^i)$ equals the cardinality of $f^{-1}(\omega^i)$. 
\bigskip

Let $z\in\F_q^\times$ be such that $zC=C$. The function $f$ is thus stable by $z$, i.e.

$$
f(zg)=f(g)
$$
for any $g\in C$ and so the multiplication by $z$ permutes the eigenvalues of $C$ in each fiber $f^{-1}(\omega^i)$. The order $o(z)$ of $z$ divides thus the multiplicities $m_1,\dots,m_r$ and so $z$ is an $n$-th root of unity since 

$$
\sum_{i=1}^rm_i|\omega^i|=n.
$$
Then

\begin{equation}
\omega_{o(z)}:=\{(o(z),\omega^i)^{m_i/o(z)}\}
\label{type-important}\end{equation}
is the type of the $\Gl_n(\F_q)$-conjugacy class $O_z:=\alpha C$ where $\alpha\in\overline{\F}_q^\times$ is such that

$$
F(\alpha)=z\alpha.
$$
Indeed, if $x_1,\dots,x_{m_i}$ denotes the eigenvalues of $C$ in $f^{-1}(\omega^i)$, then the Frobenius $F$ preserves the set $\{\alpha x_1,\dots,\alpha x_{m_i}\}$ and the $F$-orbits are all of size $o(z)$.

\subsection{Zariski closure of conjugacy classes}

For two conjugacy classes $C',C$ of $\Gl_n$, we say that $C'\leq C$ if $C' \subseteq \overline{C}$. If $C$ corresponds to $f \in \mathcal{P}_n(K)$ and $C'$ to $f'\in\mathcal{P}_n(K)$ via the correspondence introduced above, we have that $C' \leq C$ if and only if $f'(z) \unlhd f(z)$ for all $z \in K^*$.

Recall moreover that the closure of each conjugacy class of $\Gl_n$ is a union of conjugacy classes, i.e. we have the following stratification $$\overline{C}=\bigsqcup_{C'\leq C} C' .$$

Notice that, for any $C$, there exists a unique closed conjugacy class $C_{ss} \leq C$, which is the conjugacy class of the semisimple part of any element of $C$. Moreover, $C_{ss} \leq C'$ for any $C'\leq C$.

\subsection{Symmetric functions}

Let $\Lambda(\mathbf{x})$ be the ring of symmetric functions over the rational functions $\Q(z,w)$ in the infinite set of variables $\mathbf{x}=\{x_1,x_2,\dots\}$. For each $n \in \N$, consider  the complete symmetric function $h_n(\mathbf{x}) \in \Lambda(\mathbf{x})$ and power sum $p_n(\mathbf{x}) \in \Lambda(\mathbf{x})$ defined as 

 \begin{align*}&h_n(\mathbf{x})=\sum_{1\leq i_1\leq i_2 \leq \dots\leq i_n}x_{i_1}x_{i_2}\cdots x_{i_n},\\&p_n(\mathbf{x})=x_1^n+x_2^n+\cdots.\end{align*}

For each $\lambda=(\lambda_1,\dots,\lambda_h) \in \mathcal{P}$, we have the corresponding symmetric functions  $$h_{\lambda}(\mathbf{x}):=h_{\lambda_1}(\mathbf{x})\cdots h_{\lambda_h}(\mathbf{x}), $$ $$p_{\lambda}(\mathbf{x}):=p_{\lambda_1}(\mathbf{x})\cdots p_{\lambda_h}(\mathbf{x}) .$$

The families of functions $\{h_{\lambda}(\mathbf{x})\}_{\lambda \in \mathcal{P}}, \{p_{\lambda}(\mathbf{x})\}_{\lambda \in \mathcal{P}}$ are both basis of the $\Q(z,w)$-vector space $\Lambda(\mathbf{x})$, or equivalently, the families of functions $\{h_n(\mathbf{x})\}_{n \in \N}, \{p_n(\mathbf{x})\}_{n \in \N}$ freely generate the ring $\Lambda(\mathbf{x})$. We define the map $\psi_d:\Lambda(\mathbf{x}) \to \Lambda(\mathbf{x})$ as the only $\Q(z,w)$-algebras morphism such that $$\psi_d(p_n(\mathbf{x}))=p_{nd}(\mathbf{x}) .$$

\vspace{8 pt}

Recall that another important basis of the ring $\Lambda(\mathbf{x})$ is given by the Schur functions $\{s_{\lambda}(\mathbf{x})\}_{\lambda \in \mathcal{P}}$. On the ring $\Lambda(\mathbf{x})$ we consider the bilinear product $\langle\,,\, \rangle$ making the Schur functions orthonormal, i.e $$\langle s_{\lambda}(\mathbf{x}),s_{\mu}(\mathbf{x})\rangle=\delta_{\lambda,\mu}.$$

For a type $\omega=\{(d_i,\omega^i)^{m_i}\} \in \mathbb{T}$, we put $$s_{\omega}(\mathbf{x})\coloneqq \prod_i \psi_{d_i}(s_{\omega^i}(\mathbf{x}))^{m_i}=\prod_is_{\omega^i}(\mathbf{x}^{d_i})^{m_i}.$$
where, for a positive integer $d$, $\mathbf{x}^d$ stands for the set of variables $\{x_1^d,x_2^d,\dots\}$.

\bigskip

We will need also the following multivariable version of the definitions above. For any $k \in \N$, consider $k$-sets of infinite variables $\mathbf{x}_1=\{x_{1,1},\dots,\},\dots,\mathbf{x}_k=\{x_{k,1},\dots,\}$ and denote by $\Lambda(\mathbf{x}_1,\dots,\mathbf{x}_k)$ the ring of functions which are symmetric in each set of variables. We have $$\Lambda(\mathbf{x}_1,\dots,\mathbf{x}_k)=\Lambda(\mathbf{x}_1)\otimes \cdots \otimes \Lambda(\mathbf{x}_k) .$$ We endow the ring $\Lambda(\mathbf{x}_1,\dots,\mathbf{x}_k)$ with the bilinear form  $$\left < f_1(\mathbf{x_1})\cdots f_k(\mathbf{x_k}),g_1(\mathbf{x_1})\cdots g_k(\mathbf{x_k})\right>=\prod_{i=1}^k \left<f_i,g_i\right> .$$

For any multitype $\bm \omega=(\omega_1,\dots,\omega_k) \in \mathbb{T}^k$, put $$s_{\bm \omega}\coloneqq s_{\omega_1}(\mathbf{x}_1)\cdots s_{\omega_k}(\mathbf{x}_k) .$$

\bigskip

In this last paragraph, we introduce the rational functions that compute the cohomology of generic $\Gl_n$-character stacks. Put $\Lambda(\mathbf{x}_1,\dots,\mathbf{x}_k)[[T]]$ the ring of formal power series. For $g \in \N$, we define the element $\Omega_g(z,w) \in \Lambda(\mathbf{x}_1,\dots,\mathbf{x}_k)[[T]]$ defined as follows $$\Omega_g(z,w) \coloneqq \sum_{\lambda \in \mathcal{P}}T^{|\lambda|}\mathcal{H}_{g,\lambda}(z,w) \prod_{i=1}^k \widetilde{H}_{\lambda}(\mathbf{x}_i,z^2,w^2) ,$$ where $\widetilde{H}_{\lambda}(\mathbf{x},z,w)$ are the (modified) Macdonald polynomials, for a definition see \cite[I.11]{garsia-haiman} and $$\mathcal{H}_{g,\lambda}(z,w)=\prod_{s \in \lambda}\dfrac{(z^{2a(s)+1}-w^{2l(s)+1})^{2g}}{(z^{2a(s)+2}-w^{2l(s)})(z^{2a(s)}-w^{2l(s)+2})} $$ is the \textit{hook function}.  

\bigskip

For any $\bm \omega=(\omega_1,\dots,\omega_k) \in \mathbb{T}_n^k$ with $\omega_i=\{(d_j,\omega^j_i)^{m_j}\}$, put $$\bm \omega'\coloneqq(\omega_1',\dots,\omega_k') \in \mathbb{T}_n^k .$$ We define the following rational function $\mathbb{H}_{g,\bm \omega}(z,w)$ $$\mathbb{H}_{g,\bm \omega}(z,w)\coloneqq (1-z^2)(w^2-1)(-1)^{r(\bm \omega)}\left\langle \Coeff_{T^n}(\Omega(z,w)),s_{\bm \omega'} \right\rangle ,$$ where $$r(\bm \omega)=k|\bm \omega|+\sum_{i,j} m_i|\omega^j_i| .$$

In the following, we will put $$\mathbb{H}_{\bm \omega}(z,w)=\mathbb{H}_{0,\bm \omega}(z,w) .$$

\section{$\mathrm{GL}_n$-character stacks}\label{chapterGln}

Unless specified, $K$ is an algebraically closed field, which for us is either $K=\overline{\F}_q$ or $\C$.  If $K=\overline{\F}_q$, we denote by $F:\Gl_n(K) \to \Gl_n(K)$ the standard Frobenius, i.e. $F((a_{i,j}))=(a_{i,j}^q)$. In the following, we recall certain geometric properties of $\Gl_n$-character stacks for Riemann surfaces and extend them to a twisted setting (for an action of a finite group). For simplicity, we focus on  the case of genus $0$, i.e. for the Riemann surface $\mathbb{P}^1_{\C}$, since this is the most relevant case for our representation theoretic results. We refer to the existing literature mentioned in this section for the case of higher genus : indeed the new results on twisted versions extend easily to higher genus as the non-trivial part in these twisted versions come from the conjugacy classes.

\subsection{Definition}

Given a $k$-tuple  $\bm C=(C_1,\dots,C_k)$ of conjugacy classes of $\Gl_n(K)$, we define the following affine variety $$X_{\overline{\bm C}}\coloneqq\{(X_1,\dots,X_k) \in \overline{C_1} \times \cdots \times \overline{C_k} \ | \ X_1\cdots X_k=I_n\} .$$

\bigskip
\begin{remark}Assume that $K=\mathbb{C}$. For a subset $D \subseteq \mathbb{P}^1_{\C}$ with $D=\{d_1,\dots,d_k\}$, we can identify $X_{\overline{\bm C}}$ with the variety of representations of $\pi_1(\mathbb{P}^1_{\C}\setminus D)$ or, equivalently, local systems on $\mathbb{P}^1_{\C} \setminus D$, such that the local monodromy around each $d_i$ belongs to the Zariski closure $\overline{C}_i$.
\end{remark}
\bigskip

Inside $X_{\overline{\bm C}}$ we have the (possibly empty) open subvariety $$X_{\bm C}\coloneqq\{(X_1,\dots,X_k) \in C_1 \times \cdots \times C_k \ | \ X_1\cdots X_k=I_n\} .$$

For a $k$-tuple $\bm C'=(C'_1,\dots,C'_k)$, say that $\bm C'\leq \bm C$ if $C'_i \leq C_i$ for each $i$. For any $k$-tuple $\bm C'\leq \bm C$, we have an inclusion $X_{\bm C'} \subseteq X_{\bm C}$ and, more generally, a decomposition into locally closed subsets $$X_{\overline{\bm C}}=\bigsqcup_{\bm C'\leq \bm C} X_{\bm C'} .$$

We consider the following character stack  $$\mathcal{M}_{\overline{\bm C}}\coloneqq [X_{\overline{\bm C}}/\PGl_n(K)] ,$$ its open substack $$\mathcal{M}_{\bm C} \coloneqq [X_{\bm C}/\PGl_n(K)] $$ and the corresponding decomposition into locally closed substacks $$\mathcal{M}_{\overline{\bm C}}=\bigsqcup_{\bm C'\leq \bm C}\mathcal{M}_{\bm C'} .$$

\subsection{Review on cohomology of generic character stacks}
\label{generic-GLn-paragraph}
In \cite[Definition 2.1.1]{HA}, the authors give the following definition of a generic $k$-tuple $\bm C$.

\begin{definizione}
\label{definition-genericity-GLn}
A $k$-tuple $\bm C=(C_1,\dots,C_k)$ of  conjugacy classes of $\Gl_n(K)$ is said be \emph{generic} if the two following conditions are satisfied :
\bigskip

(1) \begin{equation}\prod_{i=1}^k\det(C_i)=1.\label{prodet}\end{equation}

(2) If, for any $0<r<n$, we select $r$ eigenvalues of $C_i$ (for each $i$), then the product of the $kr$ selected eigenvalues is different from $1$. 

\end{definizione}
\bigskip

\begin{remark} Equation (\ref{prodet}) is necessary to have $X_{\bm C}\neq\emptyset$. 
\bigskip

The notion of generic $k$-tuples of conjugacy classes has been extended to any reductive group in \cite[Section 3.1]{KNWG}.
\end{remark}
\bigskip

Notice also that the $k$-tuple $\bm C$ is generic if and only if $\bm C^{ss}=(C_1^{ss},\dots,C_k^{ss})$ is generic. 

We thus have the following result.

\begin{lemma}
For two $k$-tuples $\bm C',\bm C$ such that $\bm C' \leq \bm C$, the $k$-tuple $\bm C'$ is generic if and only if $\bm C$ is generic.    
\end{lemma}

\bigskip

Recall the following result, see \cite[Proposition 3.4]{L}.

\begin{lemma}
If the characteristic of $K$ is $0$ or large enough, for any $\bm\omega=(\omega_1,\dots,\omega_k) \in (\mathbb{T}^{\circ}_n)^k$, there exists a generic $k$-tuple $\bm C=(C_1,\dots,C_k)$ of conjugacy classes of $\Gl_n(K)$ of type $\bm{\omega}$, i.e. $C_i$ is of type $\omega_i$ for all $i$.
\end{lemma}

In the following, we fix a generic $k$-tuple $\bm C$. We have the following results describing the geometry of the character stack $\mathcal{M}_{\overline{\bm C}}$, see \cite[Theorem 3.5, 3.8]{L}.

\begin{teorema}
\label{GLncharstacksdescription}
Assume that $X_{\overline{\bm C}} \neq \emptyset$.

    (i) The stack $\mathcal{M}_{\overline{\bm C}}$ is an affine variety (i.e. the canonical map $\mathcal{M}_{\overline{\bm C}} \to M_{\overline{\bm C}}$ is an isomorphism) which is irreducible and of dimension $$d_{\bm C}=-2n^2+2 +\sum_{i=1}^k \dim(C_i) .$$
    
     (ii) $\mathcal{M}_{\bm C}$ is a dense open and smooth subvariety of $\mathcal{M}_{\overline{\bm C}}$ (it is in particular non-empty).

\end{teorema}

\begin{oss}
\label{remark-type-Gln}
A combinatorial criterion for the non-emptyness of $X_{\overline{\bm C}}$ can be found in \cite[Section 3.2]{L}. This criterion depends only on the type of $\bm C$ and not on  the eigenvalues of the conjugacy classes $C_1,\dots,C_k$.
\end{oss}

\bigskip

In what follows, we let ${\bm \omega}$ be the type of ${\bm C}$ and we put 

$$
d_{\bm \omega}:=d_{\bm C}/2.
$$ 
We have the following results concerning the cohomology of generic $\Gl_n(K)$-character stacks.

\begin{teorema}\cite[Theorem 4.8]{L} The stack $\mathcal{M}_{\overline{\bm{C}}}$ has the IC-polynomial property and 
\begin{equation}
    IE(\mathcal{M}_{\overline{\bm C}};q)=q^{d_{\bm \omega}}\mathbb{H}_{\bm \omega}\left(\sqrt{q},\dfrac{1}{\sqrt{q}}\right).
\end{equation}   
\label{E-polynomial-GLn}\end{teorema}

We also have the following conjectural formula for the mixed Poincar\'e polynomial of $\mathcal{M}_{\overline{\bm C}}$.

\begin{conjecture}\cite[Conjecture 4.5]{L}
\label{mixed-polynomial-Gln} 
We have \begin{equation}
    IH_c(\mathcal{M}_{\overline{\bm C}};q,t)=(qt^2)^{d_{\bm \omega}}\mathbb{H}_{\bm \omega}\left(-t\sqrt{q},\dfrac{1}{\sqrt{q}}\right).
\end{equation}
\end{conjecture}

The conjecture is known in some cases when $\mathcal{M}_{\overline{\bm{C}}}$ is a surface (see \cite[\S 1.5.3]{HA} and \cite[Section 7]{L} for details).

This conjectural formula is true after the specialisation $t\mapsto -1$ by Theorem \ref{E-polynomial-GLn} (see \cite[Theorem 1.2.3]{HA} in the semisimple case). 

It is also proved after the specialisation $q\mapsto 1$ which gives the Poincar\'e series. In the case of semisimple conjugacy classes this is due to by A. Mellit \cite{Mellit} who followed a strategy used by O. Schiffmann to compute the Poincar\'e polynomial of the moduli space of semistable Higgs bundles over a smooth projective curve \cite{schiffmann}. For any conjugacy classes, this is due to M. Ballandras \cite{ballandras} who reduced the proof to the semisimple case using resolutions of some singular character varieties introduced in \cite{L}.

\subsection{Twisted mixed Poincar\'e polynomials}\label{section-twisted-mixed}

Let $\bm{C}=(C_1,\dots,C_k)$ be a generic $k$-tuple of conjugacy classes of $\Gl_n(K)$ of type $\bm{\omega}$. Let $y=(y_1,\dots,y_k)$ be a $k$-tuple of elements of $K^\times$ such that $y_iC_i=C_i$ for all $i=1,\dots,k$ and 

$$
y_1\cdots y_n=1.
$$
Notice that we have also $y_i\overline{C}_i=\overline{C}_i$ for all $i$ and so $y$ acts on $\mathcal{M}_{\overline{\bm{C}}}$ by multiplication on the coordinates. We will need to compute the twisted mixed Poincar\'e series

$$
IH_c^y(\mathcal{M}_{\overline{\bm{C}}};q,t).
$$

By the same argument as in \S\ref{important}, we see that $y_1,\dots,y_k$ are $n$-th roots of unity. Let $o(y_i)$ be the order of $y_i$, put

$$
o(y):=(o(y_1),\dots,o(y_k))
$$
and 

\begin{equation}\label{o(y)}
\bm{\omega}_{o(y)}=((\omega_1)_{o(y_1)},\dots,(\omega_k)_{o(y_k)})
\end{equation}
where $\omega_i$ is the type of $C_i$ and $(\omega_i)_{o(y_i)}$ is defined by Formula (\ref{type-important}). 
\bigskip

\begin{remark}\label{multi-important}Assume that $K=\overline{\F}_q$ with $n\mid(q-1)$ (so that $\bm\mu_n\subset\F_q^\times$) and that the eigenvalues of the conjugacy classes $C_i$ are in $\F_q^\times$. Then, as in \S \ref{important}, for each $i$, we consider the conjugacy class $(O_i)_{y_i}:=\alpha_i C_i$ of $\Gl_n(\overline{\F}_q)$ where $\alpha_i\in\overline{\F}_q^\times$ satisfies

$$
F(\alpha_i)=y_i\alpha_i.
$$
Then the $k$-tuple $((O_1)_{y_1}^F,\dots,(O_k)_{y_k}^F)$ of conjugacy classes of $\Gl_n(\F_q)$ is of type $\bm{\omega}_{o(y)}$. Notice that $\alpha_1\cdots\alpha_k\in\F_q^\times$ as $$F(\alpha_1 \cdots \alpha_k)=y_1\cdots y_k (\alpha_1 \cdots \alpha_k)=\alpha_1 \cdots \alpha_k .$$

The $(k+1)$-tuple

$$
((O_1)_{y_1},\dots,(O_k)_{y_k}, (\alpha_1\cdots\alpha_k)^{-1})$$
or equivalently the $k$-tuple

$$
\big((O_1)_{y_1},\dots,(O_{k-1})_{y_{k-1}},(\alpha_1\cdots\alpha_k)^{-1}(O_k)_{y_k}\big)
$$
is generic.

\end{remark}
\bigskip

We make the following conjecture.

\begin{conjecture}
\label{conjectureIHy}
    \begin{equation}
    IH_c^y(\mathcal{M}_{\overline{\bm C}};q,t)=(qt^2)^{d_{\bm \omega}}\mathbb{H}_{\bm{\omega}_{o(y)}}\left(-t\sqrt{q},\frac{1}{\sqrt{q}}\right).
    \label{conjectural-non-degenerate-twisted}\end{equation}
\end{conjecture}

Consider the finite group $A(C_i)=\{z\in \bm{\mu}_n\,|\, zC_i=C_i\}$ and 

$$
H(\bm{C}):=\{(z_1,\dots,z_k)\in A(C_1)\times\cdots\times A(C_k)\,|\, z_1\cdots z_k=1\}.
$$

\begin{teorema}\label{twisted-teo}The pair $(\mathcal{M}_{\overline{\bm{C}}},H(\bm{C}))$ has the (twisted) IC-polynomial property with twisted IC-polynomials

$$
\left\{q^{d_{\bm \omega}}\mathbb{H}_{\bm{\omega}_{o(y)}}\left(\sqrt{q},\frac{1}{\sqrt{q}}\right)\right\}_{y\in H(\bm{C})}.
$$
In particular, for all $y\in H(\bm{C})$

\begin{equation}
IE^y(\mathcal{M}_{\overline{\bm{C}}};q):=IH^y(\mathcal{M}_{\overline{\bm{C}}};q,-1)=q^{d_{\bm \omega}}\mathbb{H}_{\bm{\omega}_{o(y)}}\left(\sqrt{q},\frac{1}{\sqrt{q}}\right).
\label{non-degenerate-twisted}\end{equation}

\end{teorema}

The second assertion is a consequence of Theorem \ref{theoremtwistedEFq} (when $K=\overline{\mathbb{F}}_q$) and Theorem \ref{theoremtwistedE} (when $K=\mathbb{C}$). 
\bigskip

Formula (\ref{non-degenerate-twisted}) is the specialization $t\mapsto -1$ of the conjectural Formula (\ref{conjectural-non-degenerate-twisted}).

\begin{proof}
Thanks to the discussion in \S \ref{twisted}, it is enough to show Theorem \ref{twisted-teo} in the case of $K=\overline{\F}_q$. 

We thus assume that our conjugacy classes $C_1,\dots,C_k$ are in $\Gl_n(\overline{\F}_q)$ with eigenvalues in $\F_q^\times$. 

From Remark \ref{multi-important} the multi-type $\bm{\omega}_{o(y)}$
is the type of the $k$-tuple $((O_1)_{y_1}^F,\dots,(O_k)_{y_k}^F)$ of conjugacy classes of $\Gl_n(\F_q)$.

Moreover, we have the following commutative diagram $$
\xymatrix{{C}_i\ar[d]_{y_iF}\ar[rr]^{f_i}&&O_{y_i}\ar[d]^F\\
{ C}_i\ar[rr]^{f_i}&&O_{y_i}}
$$
where $f_i$ is the multiplication by the scalar $\alpha_i$.

Therefore via $\prod_i f_i$, the pair $(\mathcal{M}_{\bm C},yF)$ can be identified with $(\mathcal{M}_{\bm O},F)$ where ${ O}$ is the $(k+1)$-tuple of conjugacy classes
$$
\bm {O}=\left(({ O}_1)_{y_1},\dots,(O_k)_{y_k}, \{(\alpha_1\cdots\alpha_k)^{-1} I_n\}\right)$$
which is generic.
\bigskip

We thus have
$$
\sum_{x \in \mathcal{M}^{yF}_{\bm C}}\X_{\IC^{\bullet}_{\mathcal{M}_{\overline{\bm C}}},y\circ\phi}=\sum_{x \in \mathcal{M}^{F}_{\overline{\bm O}}}\X_{\IC^{\bullet}_{\mathcal{M}_{\overline{\bm O}}},\phi}.
$$

By \cite[Theorem 4.14]{L}, the right-hand side of the above formula equals 

$$
q^{d_{\bm\omega}}\mathbb{H}_{\bm{\omega}_{o(y)}}\left(\sqrt{q},\frac{1}{\sqrt{q}}\right),
$$
hence the result.

\end{proof}

\subsection{An explicit verification in the case of $4$ punctures}\label{k=4}

In this section, we verify explicitly Conjecture \ref{conjectureIHy} in the case where $n=2$, $k=4$ and when  the conjugacy classes $C_i$ are regular semisimple conjugacy classes. In this case, it is known that the variety $\mathcal{M}_{\bm C}$ is an affine Del Pezzo surface and that Conjecture \ref{mixed-polynomial-Gln} holds, see for instance \cite{etingof-et-al} and \cite{HA}. We quickly recall these results here. Assume for the sake of simplicity that $\det(C_i)=1$  and let  $a_i=\tr(C_i)$ for each $i$.

The character variety $\mathcal{M}_{\bm C}$ is isomorphic to the affine cubic surface $S$ defined as follows \begin{equation}
\label{affinecubicsurface}
S \coloneqq \{(x,y,z) \in \mathbb{A}^3_K \ | \ xyz+x^2+y^2+z^2-(a_1a_2+a_3a_4)x- \end{equation}
\begin{equation}
-(a_2a_3+a_1a_4)y-(a_1a_3+a_2a_4)z+a_1a_2a_3a_4+a_1^2+a_2^2+a_3^2+a_4^2-4=0 \}.\end{equation}

The isomorphism is obtained by quotienting the morphism $X_{C} \to S$ given by $$(X_1,X_2,X_3,X_4) \to (\tr(X_2X_3),\tr(X_1X_3),\tr(X_1X_2)) .$$ 
Let $\overline{S}$ be the compactification of $S$ in $\mathbb{P}^3_K$, i.e. 
\begin{equation}
 \label{projcubicsurface}
\overline{S}\coloneqq \{[x,y,z,w] \in \mathbb{P}^3_K \ | \ xyz+wx^2+wy^2+wz^2-(a_1a_2+a_3a_4)xw^2- \end{equation}
\begin{equation}
-(a_2a_3+a_1a_4)yw^2-(a_1a_3+a_2a_4)zw^2+(a_1a_2a_3a_4+a_1^2+a_2^2+a_3^2+a_4^2-4=0)w^3 \}.\end{equation}
Notice that $\overline{S}=S \bigsqcup \Delta$, where $\Delta=\{[x,y,z,w] \in \mathbb{P}^3_K \ | \ w=0, xyz=0 \}$ is a triangle at infinity. 

\bigskip

For $k=4$, $n=2$  and the $C_i$s regular semisimple,  we obtain $$\mathbb{H}_{\bm \omega}(z,w)=z^2+w^2+4 .$$ Being a smooth projective cubic surface, $\overline{S}$ can be obtained as the blowup of $\mathbb{P}^2_K$ at $6$ points. From the long sequence in cohomology for the decomposition $\overline{S}=S \bigsqcup \Delta$, we can see that $H_c(S;q,t)=q^2t^4+4qt^2+t^2$ and this proves that Conjecture \ref{mixed-polynomial-Gln} is true in this case, see \cite{HA} for more details.

\bigskip

We now verify Conjecture \ref{conjectureIHy} in certain degenerate cases for $k=4$. For the sake of simplicity, assume that $C_1,C_2,C_3$ are the conjugacy class of $$ \begin{pmatrix}
    i &0\\
    0 &-i
\end{pmatrix}$$
and $C_4$ is any (regular semisimple) non-degenerate conjugacy class such that the corresponding $\bm C$ is generic. In this case, we have that $a_1=a_2=a_3=0$ and  thus Formula (\ref{affinecubicsurface}) has the following simpler form $$xyz+x^2+y^2+z^2+a_4^2-4=0 .$$

Notice that $A(C_1)=A(C_2)=A(C_3)={\bm \mu}_2$. In this case, we thus have $$H_{\bm C}=\{y=(y_1,y_2,y_3) \in {\bm\mu}_2^3 \ | \ y_1y_2y_3=1\}=\{(1,1,1),(-1,-1,1),(-1,1,-1),(1,-1,-1)\} .$$ We will describe the polynomial $H^y_c(\mathcal{M}_{\bm C};q,t)$ for $y=(-1,-1,1)$. A similar computation applies for the elements $(-1,1,-1),(1,-1,-1)$.

\bigskip

From Formula (\ref{non-degenerate-twisted}) and Lemma \ref{lemma-combinatorial-2}, we know that $E^y(\mathcal{M}_{\bm C};q)=q\mathbb{A}_2\left(\sqrt{q},\dfrac{1}{\sqrt{q}}\right)$. For $k=4$, we have that $\mathbb{A}_2(z,w)=z^2+w^2$ and thus $E^y(\mathcal{M}_{\bm C};q)=q^2+1$. This implies that $\tr(y\mid W^4_4/W^4_3)=1$, $\tr(y\mid W^2_2/W^2_1)=0$ and $\tr(y\mid W^2_0/W^2_{-1})=1$, and therefore \begin{equation}
\label{verificationk4}
    H^y_c(\mathcal{M}_{\bm C};q,t)=qt^2+t^2=qt^2\mathbb{A}_2\left(-t\sqrt{q},\dfrac{1}{\sqrt{q}}\right).
\end{equation}

We could have obtained the same result in a direct and more geometric way, looking at the action of $y$ on $S$. The latter extends indeed to an action of $y$ on $\overline{S}$ given by $y([x,y,z,w])=[-x,-y,z,w]$. Using the long exact sequence in cohomology for $\overline{S}=S \bigsqcup \Delta$ and that the action of $y$ on $H^{\bullet}(\Delta)$ is trivial, we get similarly Formula (\ref{verificationk4}) above.

\section{$\mathrm{PGL}_n$-character stacks}

In this section, $K$ is an algebraically closed field. We put $\PGl_n=\PGl_n(K)$. If $K=\overline{\F}_q$, $F:\PGl_n\rightarrow\PGl_n$ denotes the Frobenius $(a_{i,j})\mapsto (a_{i,j}^q)$ and we assume that $n\mid q-1$ or equivalently that $\bm{\mu}_n\subseteq\F_q^\times$. Again, for simplicity, we will focus on the case where $g=0$, which is the most relevant case for our results in representation theory. However, most of our results can be immediately generalized to the case where $g>0$: for more details on these generalizations, see Conjecture \ref{IHg>0} and Theorem \ref{IEg>0}.

\subsection{$\mathrm{PGL}_n$-conjugacy classes}

Let $p_n:\Gl_n \to \PGl_n$ be the canonical projection map. Consider a conjugacy class $\mathcal{C} \subseteq \PGl_n$ and a conjugacy class $C \subseteq \Gl_n$ such that $$p_n(C)=\mathcal{C} .$$

Put

$$
A(\mathcal{C})\coloneqq A(C)=\{\lambda \in K^\times \ | \ \lambda \cdot C= C \}
$$
and put

$$
d(\mathcal{C}):=|A(\mathcal{C})|.
$$
Notice that $A(\mathcal{C})$ does not depend on the choice of $C$ and is finite. More precisely, we have $A(\mathcal{C}) \subseteq \bm\mu_n$ (see \S \ref{important}). This group has the following geometric interpretation.

\begin{lemma}
For any $\overline{x} \in \mathcal{C}$, there is an isomorphism \begin{equation}
\label{connected-components}
    A(\mathcal{C}) \cong \pi_0(C_{\PGl_n}(\overline{x}))
\end{equation}
\end{lemma}

\begin{proof}
Let $x \in C$ such that $p_n(x)=\overline{x}$. There is a short exact sequence of algebraic groups:

\begin{center}
    \begin{tikzcd}
1 \arrow[r,""] &p_n(C_{\Gl_n}(x)) \arrow[r,""] &C_{\PGl_n}(\overline{x}) \arrow[r,"\theta"] &A(\mathcal{C}) \arrow[r,""] &1         
    \end{tikzcd},
\end{center}
where the map $\theta$ is defined as follows. Given $h \in C_{\PGl_n}(\overline{x})$, pick $g \in \Gl_n$ such that $p_n(g)=h$. Since $h \in C_{\PGl_n}(\overline{x})$, we have $gxg^{-1}=\lambda x$ for a certain $\lambda \in K^*$. We put $\theta(h)=\lambda.$ This does not depend on the choice of $g$.

Since $p_n(C_{\Gl_n}(x))$ is connected and $A(\mathcal{C})$ is finite, we deduce the isomorphism (\ref{connected-components}).
\end{proof}

 Notice that, being a finite subgroup of $\bm\mu_n$, the group $A(\mathcal{C})$ is cyclic. If $d(\mathcal{C})=|A(\mathcal{C})| \neq 1$ (i.e. if the centralizer of an element $\overline{x} \in \mathcal{C}$ is not connected), we say that the conjugacy class $\mathcal{C}$ is \textit{degenerate}, otherwise we say that it is \textit{non degenerate}.

\begin{remark}
\label{type-division}
Assume that $K=\overline{\F}_q$ and that the eigenvalues of $\mathcal{C},C$ are all contained in $\F_q^\times$. Put $\omega \in \mathbb{T}_n^{\circ}$ for the type of $C$. Since $A(\mathcal{C})$ is cyclic, from \cref{important}, we see that, for each $\lambda \in \mathcal{P}^*$, the integer $d(\mathcal{C})$ divides $\omega((1,\lambda))$.

In particular, there is a well defined type $\dfrac{\omega}{d(\mathcal{C})} \in \mathbb{T}^{\circ}_n$, with $$\dfrac{\omega}{d(\mathcal{C})}\left((1,\lambda)\right)\coloneqq\dfrac{\omega((1,\lambda))}{d(\mathcal{C})} .$$

We give a similar definition of $\dfrac{\omega}{r}$ for any $r$ such that $r\mid d(\mathcal{C})$.
\end{remark}

\subsection{Local systems on $\mathrm{PGL}_n$-conjugacy classes}\label{Localsystemson}

The projection $p_n:C \to \mathcal{C}$ is a $A(\mathcal{C})$-Galois covering. We deduce that if $\mathcal{C}$ is non degenerate, the projection $p_n:C \to \mathcal{C}$ is an isomorphism. For each $z \in A(\mathcal{C})$, we denote be $\sigma_z:C \to C$ the corresponding Galois automorphism where $\sigma_z(g)=zg$.

\bigskip

We have  a decomposition, see (\ref{decomposition-equivariant})

\begin{equation}
    (p_n)_*\kappa\cong \bigoplus_{\chi \in \widehat{A(\mathcal{C})}}V_\chi\otimes \mathcal{L}^\mathcal{C}_{\chi}
\label{Gal}\end{equation}
 and $\{\mathcal{L}^\mathcal{C}_\chi\}_\chi$ is the set of isomorphism classes of irreducible $\PGl_n$-equivariant local systems on $\mathcal{C}$.

\begin{oss}
More generally, for any connected algebraic group $G$, any element $x$ in some conjugacy class $C$ of $G$, the irreducible $G$-equivariant local systems on $C$ are parametrized by the irreducible representations of the finite group $\pi_0(C_{G}(x))$, see for instance \cite[Lemma 8.4.11]{ginzburg}.
\end{oss}
\bigskip

As done before with $\Gl_n$, for any two conjugacy classes $\mathcal{C},\mathcal{C}'$ of $\PGl_n$ we write $\mathcal{C}'\leq \mathcal{C}$ if $\mathcal{C}'\subseteq \overline{\mathcal{C}}$. 

\begin{oss}
\label{remark-cloture}
For two conjugacy classes $\mathcal{C}' \leq \mathcal{C}$ of $\PGl_n$, there is a priori no definite relation of inclusion between $A(\mathcal{C})$ and $A(\mathcal{C}')$. Consider for example $n=4$ and the following elements $y_1,y_2,y_3\in \Gl_4$:

$$y_1= (J(-1,(2)),J(1,(2)))$$ $$y_2=(J(-1,(1^2)),J(1,(2)) $$ $$y_3=(J(1,(1^2)),J(-1,(1^2)) ,$$ the corresponding projections $x_i=p_4(y_i)$ in $\PGl_4$ and their conjugacy classes $\mathcal{C}_{x_i}$. On the one side, we have $\mathcal{C}_{x_3} \leq \mathcal{C}_{x_2} \leq \mathcal{C}_{x_1}$. On the other side, we have $A(\mathcal{C}_{x_1})=A(\mathcal{C}_{x_3})=\bm\mu_2$ and $A(\mathcal{C}_{x_2})=\{1\}$. Notice however that it is always true that $$A(\mathcal{C}) \subseteq A(\mathcal{C}^{ss}) .$$

\end{oss}

\bigskip

We have the following stratification: $$\overline{\mathcal{C}}=\bigsqcup_{\mathcal{C}'\leq \mathcal{C}} \mathcal{C}'.$$ Moreover, for any $\mathcal{C}'\leq \mathcal{C}$, there exists a unique $C'\leq C$ such that $p(C')=\mathcal{C}'$.

\begin{oss}
Notice that $A(\mathcal{C})$ acts on $\overline{C}$, since for each $\lambda \in K^*$ such that $\lambda \cdot C=C$, we have $\lambda \cdot \overline{C}=\overline{C}$, i.e. $A(\mathcal{C})$ acts on $\overline{C}$. The map $p_n:\overline{C} \to \overline{\mathcal{C}}$ is $A(\mathcal{C})$-invariant. However, from Remark \ref{remark-cloture} we deduce that the latter map is not a Galois covering in general.

\end{oss}

We can still deduce the following.

\begin{prop}
\label{conjclass-closure}
The map $p_n:\overline{C} \to \overline{\mathcal{C}}$ is a finite map and a $A(\mathcal{C})$-Galois covering over the open subset $\mathcal{C}$.
\end{prop}

\begin{proof}
Consider the stratification $$\displaystyle \overline{\mathcal{C}}=\bigsqcup_{\mathcal{C}'\leq \mathcal{C}} \mathcal{C}.$$We have  $p_n^{-1}(\mathcal{C}')=C'$ and $p_n:C' \to \mathcal{C}'$ is an $A(\mathcal{C}')$-Galois covering and so  is quasi-finite. 

Define in a similar way to what we did for $\Gl_n$, the conjugacy class $\mathcal{C}_{ss}$. We have $\overline{C} \subseteq A(\mathcal{C}_{ss}) \cdot \overline{C}$. The map $$p_n':A(\mathcal{C}_{ss}) \cdot \overline{C} \to \overline{\mathcal{C}}$$ is an $A(\mathcal{C}_{ss})$-Galois covering and thus proper. The map $p_n$ is the composition of $p_n'$ and the closed embedding $\overline{C} \subseteq A(\mathcal{C}_{ss}) \cdot \overline{C}$ and thus is also proper. We deduce that $p_n:\overline{C} \to \overline{\mathcal{C}}$ is finite.
\end{proof}

\bigskip

We deduce the following result from Lemma \ref{corollary-small-map} and Formula (\ref{Gal}).

\begin{prop}
\label{IC-conjclasses}
We have an isomorphism \begin{equation}
    (p_n)_*(\IC^\bullet_{\overline{C}})=\bigoplus_{\chi \in \widehat{A(\mathcal{C})}} V_\chi\otimes\IC^\bullet_{\overline{\mathcal{C}},\mathcal{L}^{\mathcal{C}}_{\chi}}
\end{equation}    
\end{prop}

\subsection{Geometry of $\mathrm{PGL}_n$-character stacks}
\label{section-geometry}

Fix a $k$-tuple $\bm{\mathcal{C}}=(\mathcal{C}_1,\dots,\mathcal{C}_k)$ of  conjugacy classes of $\PGl_n$ and a $k$-tuple $\bm C=(C_1,\dots,C_k)$ of conjugacy classes of $\Gl_n$ such that $p_n(C_i)=\mathcal{C}_i$ for each $i=1,\dots,k$. Fix now a $n$-th root 
$$
\lambda_{\bm C}=\sqrt[n]{\prod_{i=1}^k \det(C_i)}
$$
and consider the following affine algebraic variety

$$
X_{\overline{\bm {\mathcal{C}}}}\coloneqq \left\{(X_1,\dots,X_k) \in \overline{\mathcal{C}_1} \times \cdots \times \overline{\mathcal{C}_k} \ | \ X_1 \cdots X_k=1\right\} $$

and its open subvariety $$X_{\bm{\mathcal{C}}}=X_{\overline{\bm{\mathcal{C}}}} \cap \left(\mathcal{C}_1 \times \cdots \times \mathcal{C}_k\right) .$$

For any $\zeta \in \bm\mu_n$, put

$$
\bm C(\zeta):=(C_1,\dots,C_k,\zeta \lambda_{\bm C}^{-1}I_n).
$$
Then

$$
X_{\overline{\bm{C}(\zeta)}}:=\left\{(X_1,\dots,X_k)\in\overline{C}_1\times\cdots\times\overline{C}_k\,|\, X_1\cdots X_k=\zeta^{-1}\lambda_{\bm{C}}\right\}.
$$
We have a decomposition
$$(p_n^k)^{-1}(X_{\overline{\bm{\mathcal{C}}}})= \bigsqcup_{\zeta \in \bm\mu_n} X_{\overline{\bm C(\zeta)}}
,$$
i.e. the following diagram is cartesian \begin{center}
    \begin{tikzcd}
  \displaystyle\bigsqcup_{\zeta \in \bm\mu_n} X_{\overline{\bm C(\zeta)}} \arrow[r," "] \arrow[d,"p"] & \overline{C_1} \times \cdots \times \overline{C_k} \arrow[d," p^k"] \\
    X_{\overline{\bm{\mathcal{C}}}} \arrow[r,"i_{\overline{\bm{\mathcal{C}}}}"] & \overline{\mathcal{C}_1} \times \cdots \times \overline{\mathcal{C}_k}.
    \end{tikzcd}
\end{center}

Notice that $\PGl_n$ acts diagonally by conjugation on each of the above varieties. We consider the $\PGl_n$-character stack
 
$$\mathcal{M}_{\overline{\bm{\mathcal{C}}}}\coloneqq [X_{\overline{\bm{\mathcal{C}}}} /\PGl_n] $$

and its open substack $$ \mathcal{M}_{\bm{\mathcal{C}}}=[X_{\bm{\mathcal{C}}}/\PGl_n] .$$

\begin{definizione}
\label{definitiongenericity}
The $k$-tuple $\bm{\mathcal{C}}$ of conjugacy classes of $\PGl_n$ is said to be \emph{generic} if, for every $\zeta \in \bm\mu_n$, the $(k+1)$-tuples $\bm C(\zeta)$ are generic in the sense of Definition \ref{definition-genericity-GLn}.
\end{definizione}

Recall that, under the genericity assumption, for each $\zeta \in \bm\mu_n$, the variety $X_{\bm C(\zeta)}$ is non empty if and only if $X_{\overline{\bm C(\zeta)}}$ is non empty by Theorem \ref{GLncharstacksdescription}(ii).

Under these assumptions, we have thus the following stratifications for the representation varieties and the corresponding character stacks: 
$$X_{\overline{\bm C(\zeta)}}=\bigsqcup_{\bm C'\leq \bm C}X_{\bm C'(\zeta)} \ \ \ \  \text{ and }  X_{\overline{\bm{\mathcal{C}}}}=\bigsqcup_{\bm{\mathcal{C}}'\leq \bm{\mathcal{C}}} X_{\bm{\mathcal{C}}'} $$

\begin{equation}
\label{stratification-character-stacks}
\mathcal{M}_{\overline{\bm C(\zeta)}}=\bigsqcup_{\bm C'\leq \bm C}\mathcal{M}_{\bm C'(\zeta)} \ \ \ \  \text{ and }  \mathcal{M}_{\overline{\bm{\mathcal{C}}}}=\bigsqcup_{\bm{\mathcal{C}'}\leq \bm{\mathcal{C}}} \mathcal{M}_{\bm{\mathcal{C}'}}.
\end{equation}

\begin{oss}
In \cite[Definition 12]{KNWG}, the authors give a definition of a generic $k$-tuple of conjugacy classes for any reductive group $G$. It is not hard to see that their definition agrees with our definition in the case of $\PGl_n$.
\end{oss}

\subsection{Description of $\mathrm{PGL}_n$-character stacks}
\label{description-section}
 Put $$A(\bm{\mathcal{C}})\coloneqq \prod_{i=1}^k A(\mathcal{C}_i) .$$
Since each $A(\mathcal{C}_i)$ is a subgroup of $\bm\mu_n$, we can define its subgroup $$H(\bm{\mathcal{C}})\coloneqq \{(y_1,\dots,y_k) \in A(\mathcal{C}_1) \times \cdots \times A(\mathcal{C}_k) \ | \ y_1\cdots y_k=1\} $$  and $$H'(\bm{\mathcal{C}})\coloneqq \langle A(\mathcal{C}_1),\dots,A(\mathcal{C}_k) \rangle \subseteq \bm\mu_n \subseteq K^\times.$$

\bigskip
\begin{oss}
The subgroup $H'(\bm{\mathcal{C}})$ is trivial if and only if the classes $\mathcal{C}_1,\dots,\mathcal{C}_k$ are non degenerate. On the other side, we can have $H(\bm{\mathcal{C}})=\{1\}$ even if some of the classes $\mathcal{C}_1,\dots,\mathcal{C}_k$ are degenerate. 
\end{oss}

\bigskip

Notice that there is a short exact sequence of abelian groups \begin{center}
\begin{tikzcd}
\label{sequence}
1 \arrow[r,""] &H(\bm{\mathcal{C}}) \arrow[r] &A(\bm{\mathcal{C}}) \arrow[r,"\psi"] &H'(\bm{\mathcal{C}}) \arrow[r,""] &1
\end{tikzcd}
\end{center}
where the second arrow is the canonical inclusion and $\psi((y_1,\dots,y_k))=y_1\cdots y_k$. 

Put 

$$d'(\bm{{\mathcal{C}}})\coloneqq |H'(\bm{\mathcal{C}})|$$
i.e. $H'(\bm{\mathcal{C}})=\bm\mu_{d'(\bm{\mathcal{C}})}$. Notice that $$d'(\bm{\mathcal{C}})=\lcm(d(\mathcal{C}_1),\dots,d(\mathcal{C}_k)),$$where we recall that $d(\mathcal{C}_i)=|A(\mathcal{C}_i)|$.
\bigskip

Let moreover 

$$
\iota(\bm{\mathcal{C}})\coloneqq \dfrac{n}{d'(\bm{\mathcal{C}})}
$$
and let $\zeta_n$ be a generator of  $\bm\mu_n\subset K^\times$ so that $\bm\mu_n=\{1,\zeta_n,\dots,(\zeta_n)^{n-1}\}$. We have  \begin{equation}
\label{H'}H'(\bm{\mathcal{C}})=\left.\left\{(\zeta_n)^{k \iota(\bm{\mathcal{C}})}\,\right|\, k=0,\dots, d'(\bm{\mathcal{C}})-1\right\}.\end{equation}  

For each $i=1,\dots,k$, put 

\begin{equation}
d'(\mathcal{C}_i)\coloneqq \dfrac{d'(\bm{\mathcal{C}})}{d(\mathcal{C}_i)}.
\label{d'}\end{equation}

We will need the following result.

\begin{lemma}
\label{splitting-exact-sequence}
The above short exact sequence splits and we have an isomorphism $$A(\bm{\mathcal{C}}) \cong H(\bm{\mathcal{C}}) \times H'(\bm{\mathcal{C}}) .$$
\end{lemma}

\begin{proof}

Notice that, for each $i=1,\dots,k$ and $z \in H'(\bm{\mathcal{C}})$, we have that $$ z^{d'(\mathcal{C}_i)} \in \bm\mu_{d(\mathcal{C}_i)}=A(\mathcal{C}_i) .$$ Moreover,  we have

\begin{equation}
    \label{gcd}
    \gcd(d'(\mathcal{C}_1),\dots,d'(\mathcal{C}_k))=1
\end{equation}
from which we deduce that there exist $\gamma_1,\dots,\gamma_k \in \Z$ such that $$\sum_{i} \gamma_i d'(\mathcal{C}_i)=1 .$$

Define $$\Psi:A(\bm{\mathcal{C}}) \to H(\bm{\mathcal{C}}) \times H'(\bm{\mathcal{C}}) $$ $$(y_1,\dots,y_k)\mapsto\left(\left(\dfrac{y_1}{\psi(y_1,\dots,y_k)^{\gamma_1 d'(\mathcal{C}_1)}},\dots,\dfrac{y_k}{\psi(y_1,\dots,y_k)^{\gamma_k d'(\mathcal{C}_k)}}\right), \psi(y_1,\dots,y_k)\right) .$$

It is not hard to see that $\Psi$ is an isomorphism.
\end{proof}

\bigskip

Consider the subset

$$
I(\bm{\mathcal{C}}):=\left\{(\zeta_n)^j\,|\,j=0,\dots,\iota(\bm{\mathcal{C}})-1\right\} \subseteq \bm{\mu}_n.
$$
We have the following.

\begin{prop}
\label{geometry-character-stacks}
  The morphism $$\displaystyle \overline{p}: \bigsqcup_{\zeta \in I(\bm{\mathcal{C}})}\mathcal{M}_{\overline{\bm C(\zeta)}} \to \mathcal{M}_{\overline{\bm{\mathcal{C}}}}$$ is finite and is a $H(\bm{\mathcal{C}})$-Galois covering over $\mathcal{M}_{\bm{\mathcal{C}}}$.
\end{prop}

\begin{remark}If the conjugacy classes $\mathcal{C}_i$ are all non-degenerate, then $I(\bm{\mathcal{C}})={\bm{\mu}}_n$, $H(\bm{\mathcal{C}})=1$ and so $\overline{p}$ is birational and, more precisely, its restriction
$$\overline{p}: \bigsqcup_{\zeta \in \bm{\mu}_n}\mathcal{M}_{\bm C(\zeta)}\cong \mathcal{M}_{\bm{\mathcal{C}}} $$ is an isomorphism.

\label{rem-non-degenerate}\end{remark}
\begin{proof}
From Proposition  \ref{conjclass-closure}, the map $p^k:\overline{C}_1 \times \cdots \times \overline{C}_k\to \overline{\mathcal{C}}_1 \times \cdots \times \overline{\mathcal{C}}_k$ is a finite map and an $A(\bm{\mathcal{C}})$-Galois covering over $\mathcal{C}_1 \times \cdots \times \mathcal{C}_k$. By base change we deduce that: \begin{equation}
p:\displaystyle  \bigsqcup_{\zeta \in \bm\mu_n} X_{\overline{\bm C(\zeta)}} \to X_{\overline{\bm{\mathcal{C}}}}
\end{equation}
is a finite map and an $A(\bm{\mathcal{C}})$-Galois covering over $X_{\bm{\mathcal{C}}}$. Therefore, we see that $\overline{p}$ is finite.

Identify $H'(\bm{\mathcal{C}})$ with a subgroup of $A(\bm{\mathcal{C}})$ through the isomorphism of Lemma \ref{splitting-exact-sequence}. For each $z \in H'(\bm{\mathcal{C}})$, let $\sigma_z$ be the corresponding Galois automorphism $$\sigma_z:\bigsqcup_{\zeta \in \bm\mu_n} X_{\bm C(\zeta)} \to \bigsqcup_{\zeta \in \bm\mu_n} X_{\bm C(\zeta)} .$$

For each $\zeta \in \bm\mu_n$, we have that $\sigma_z\left( X_{\bm C(\zeta)}\right)= X_{\bm C(z \zeta)}$. We deduce that there is an isomorphism $$\left(\bigsqcup_{\zeta \in \bm\mu_n} X_{\bm C(\zeta)}\right)/H'(\bm{\mathcal{C}}) \cong  \bigsqcup_{\zeta \in \nu(\mathcal{C})}X_{\bm C(\zeta)}  $$ and thus that $\overline{p}$ is a Galois covering over $X_{\mathcal{C}}$.

Taking the quotient by $\PGl_n$, we obtain the corresponding properties for the map $$\overline{p}: \bigsqcup_{\zeta \in I(\bm{\mathcal{C}})}\mathcal{M}_{\overline{\bm C(\zeta)}} \to \mathcal{M}_{\overline{\bm{\mathcal{C}}}} .$$

\end{proof}

\bigskip

\begin{oss}
\label{remark-irreducible-components}
The action of $H(\bm{\mathcal{C}})$  on $\displaystyle \bigsqcup_{\zeta \in I(\bm{\mathcal{C}})}\mathcal{M}_{\overline{\bm C(\zeta)}}$ leaves $\mathcal{M}_{\overline{\bm C(\zeta)}}$ invariant for each $\zeta \in \nu(\mathcal{C})$. Since $\overline{p}$ is an $H(\bm{\mathcal{C}})$-Galois covering over $\mathcal{M}_{\bm{\mathcal{C}}}$, we deduce that \begin{equation}
\label{irreducible-comp}
\overline{p}\left(\mathcal{M}_{\bm C(\zeta)}\right) \cap \overline{p}\left(\mathcal{M}_{\bm C(\zeta')}\right)=\emptyset\end{equation} for every $\zeta \neq \zeta'$.

Since $\overline{p}$ is finite, for every $\zeta \in I(\bm{\mathcal{C}})$, the image $\overline{p}\left(\mathcal{M}_{\bm C(\zeta)}\right)$ is an irreducible closed substack of dimension $d_{\bm C}$. From Formula (\ref{irreducible-comp}) we deduce that the $\overline{p}\left(\mathcal{M}_{\bm C(\zeta)}\right)$'s are exactly the irreducible component of $\mathcal{M}_{\overline{\bm {\mathcal{C}}}}$. In particular, the stack $\mathcal{M}_{\overline{\bm {\mathcal{C}}}}$ has $\iota(\bm{\mathcal{C}})$ irreducible components and $\overline{p}$ respects the Assumption \ref{assumptionmap}.

\end{oss}

\bigskip

From Proposition \ref{geometry-character-stacks} and Remark \ref{remark-irreducible-components} we deduce the following.

\begin{prop}
\label{proposition-DM}
For a generic $k$-tuple $\mathcal{C}$ of $\PGl_n$-conjugacy classes, the stack $\mathcal{M}_{\overline{\bm{\mathcal{C}}}}$ is an equidimensional Deligne-Mumford stack of dimension $d_{\bm C}$ with $\iota(\bm{\mathcal{C}})$ irreducible components. The substack $\mathcal{M}_{\bm{\mathcal{C}}}$ is a smooth and (everywhere) dense open substack.    
\end{prop}

\subsection{Cohomology of local systems on $\mathrm{PGL}_n$-character stacks}
\label{paragraph-cohomology}
In this section, we give our main results concerning the cohomology of $\PGl_n$-character stacks. 

\vspace{2 pt}
 
We choose a generic $k$-tuple $\bm{\mathcal{C}}=(\mathcal{C}_1,\dots,\mathcal{C}_k)$ of conjugacy classes $\PGl_n(K)$ and we let $\bm{C}=(C_1,\dots,C_k)$ be a $k$-tuple of conjugacy classes of $\Gl_n(K)$ above $\bm{\mathcal{C}}$. 
\bigskip

If $K=\overline{\F}_q$, we recall that $q-1$ is assumed to be divisible by $n$ (i.e. $\bm{\mu}_n\subseteq \F_q^\times$). In this case we further assume that the conjugacy classes $\mathcal{C}_i$ are $F$-stable with eigenvalues in $\F_q^\times$. We then choose the conjugacy classes $C_i$ to be  $F$-stable with eigenvalues in $\F_q^\times$. We also assume that  that $\lambda_{\bm C} \in \F_q^\times$, or, equivalently, that $$\prod_{i} \det(C_i) \in (\F_q^\times)^n ,$$ where $(\F_q^\times)^n \subseteq \F_q^\times$ is the subgroup of $n$-th powers. 
Under these assumptions, the constructions of \cref{section-geometry} are all compatible with $F$, i.e. are all defined over $\F_q$. 

 \bigskip

In the rest of the chapter, we put $\bm \omega$ to denote the multi-type $(\omega_1,\dots,\omega_k)$ where $\omega_i\in\mathbb{T}^o_n$ denotes the type of the conjugacy class $C_i\subset\Gl_n(K)$.

\subsubsection{Non-degenerate case}

Assume that $\mathcal{C}_1,\dots,\mathcal{C}_k$ are all non-degenerate. From Remark \ref{rem-non-degenerate} and Lemma \ref{corollary-small-map}, we deduce that \begin{equation}
    \label{IC-nondegenerate}
    \overline{p}_*\left(\bigoplus_{\zeta \in \bm\mu_n}\IC^\bullet_{ \mathcal{M}_{\overline{\bm C(\zeta)}}}\right) \cong \IC^\bullet_{\mathcal{M}_{\overline{\bm{\mathcal{C}}}}}.
\end{equation}

Taking global sections, we have the following.

\begin{prop}
If the $\PGl_n$-conjugacy classes $\mathcal{C}_1,\dots,\mathcal{C}_k$ are all non-degenerate, we have an isomorphism:

\begin{equation}
    \bigoplus_{\zeta \in \bm\mu_n} IH_c^*(\mathcal{M}_{\overline{\bm C(\zeta)}}) \cong IH_c^*(\mathcal{M}_{\overline{\bm{\mathcal{C}}}}).
\end{equation}
and thus 
\begin{equation}
IH_c(\mathcal{M}_{\overline{\bm{\mathcal{C}}}},q,t)=\sum_{\zeta \in \bm\mu_n} IH_c(\mathcal{M}_{\overline{\bm C(\zeta)}},q,t).    
\end{equation}
\end{prop}

The following conjecture and theorem are thus consequences of Theorem \ref{E-polynomial-GLn} and Conjecture \ref{mixed-polynomial-Gln}.   

\begin{conjecture}
\label{conj-nondegenerate}
If  $\mathcal{C}_1,\dots,\mathcal{C}_k$ are all non-degenerate, we have \begin{equation}
    IH_c(\mathcal{M}_{\overline{\bm{\mathcal{C}}}};q,t)=n(qt^2)^{d_{\bm\omega}}\mathbb{H}_{\bm\omega}\left(-t\sqrt{q},\dfrac{1}{\sqrt{q}}\right).
\end{equation}
\end{conjecture}

\begin{remark}If we choose the conjugacy classes $C_i$ in $\Sl_n$, then we can regard $\mathcal{M}_{\overline{\bm C}}$ as an $\Sl_n$-character stack and the above conjecture together with Conjecture \ref{mixed-polynomial-Gln} says that, up to multiplication by the cardinality of the center of $\Sl_n$, the mixed Poincar\'e series of the $\PGl_n$ and $\Sl_n$-character stacks agree.

\label{mirror}    
\end{remark}

\begin{teorema}
\label{theorem-nondegenerate}
If $\mathcal{C}_1,\dots,\mathcal{C}_k$ are all non-degenerate, we have \begin{equation}
    IE(\mathcal{M}_{\overline{\bm{\mathcal{C}}}};q)=n(qt^2)^{d_{\bm \omega}}\mathbb{H}_{\bm\omega}\left(\sqrt{q},\dfrac{1}{\sqrt{q}}\right).
\end{equation}    
\end{teorema}

\begin{remark}Assume that  $\mathcal{C}_1,\dots,\mathcal{C}_k$ are all semisimple conjugacy classes. From \cite[Theorem 5.3.10]{HA1} we know  that the coefficient of the highest power of $q$ in $\mathbb{H}_{\bm \omega}\left(\frac{1}{\sqrt{q}},\sqrt{q}\right)$ equals $1$. Therefore the coefficient of the highest power of $q$ in $E(\mathcal{M}_{\bm{\mathcal{C}}};q)$ equals $n$ which is also the number of connected components of the center of the dual group $\Sl_n$ of $\PGl_n$. This has been previously observed for an arbitrary connected reductive group \cite[Remark 3 (iii)]{KNWG}.
    
\end{remark}

\subsubsection{The degenerate case}
\label{localsystemsPGLncohomology}

Recall that the irreducible $\PGl_n(K)$-equivariant local systems on a conjugacy class $\mathcal{C}$ of $\PGl_n(K)$ are parametrized by the irreducible character of $A(\mathcal{C})$ and we denote by $\mathcal{L}_\chi^\mathcal{C}$ the irreducible local system on $\mathcal{C}$ corresponding to $\chi\in\widehat{A(\mathcal{C})}$ (see \S \ref{Localsystemson}).
\bigskip

Consider a character $\chi \in \widehat{A(\bm{\mathcal{C}})}$, where $\chi=\chi_1 \boxtimes \cdots \boxtimes \chi_k$ with $\chi_i \in \widehat{A(\mathcal{C}_i)}$ for each $i=1,\dots,k$. Let $\mathcal{F}^{\bm{\mathcal{C}}}_{\chi}\coloneqq \mathcal{L}^{\mathcal{C}_1}_{\chi_1} \boxtimes \cdots \boxtimes \mathcal{L}^{\mathcal{C}_k}_{\chi_k}$ be the corresponding local system on $\mathcal{C}_1 \times \cdots \times \mathcal{C}_k$ and  let $$\mathcal{E}^{\bm{\mathcal{C}}}_{\chi} \coloneqq  i_{\bm {\mathcal{C}}}^*(\mathcal{F}^{\bm{\mathcal{C}}}_{\chi}) $$ be its restriction to $X_{\bm{\mathcal{C}}}$. 

The local systems $\mathcal{E}^{\bm{\mathcal{C}}}_{\chi}$  are the ones coming from the Galois covering $$p:\bigsqcup_{\zeta \in \bm\mu_n}X_{\bm C(\zeta)} \to X_{\bm{\mathcal{C}}} ,$$ i.e.  we have an isomorphism \begin{equation}
    p_*(\kappa) \cong \bigoplus_{\chi \in \widehat{A(\bm{\mathcal{C}})}} V_\chi\otimes \mathcal{E}^{\bm{\mathcal{C}}}_{\chi}.
\end{equation}

We will need the following lemma.

\begin{lemma}For any $\chi\in \widehat{A(\bm{\mathcal{C}})}$
\label{restriction}
we have an isomorphism \begin{equation}
\label{isomrestriction0}
\IC^{\bullet}_{X_{\overline{\bm{\mathcal{C}}}},\mathcal{E}^{\bm{\mathcal{C}}}_{\chi}}=i_{\overline{\bm {\mathcal{C}}}}^*\left(\IC^{\bullet}_{\overline{\mathcal{C}_1}\times \cdots \times \overline{\mathcal{C}_k},\mathcal{F}^{\bm{\mathcal{C}}}_{\chi}}\right).    
\end{equation}
\end{lemma}

\begin{proof}
From Lemma \ref{corollary-small-map}, we have isomorphisms \begin{equation}
\label{isomrestriction1}
\IC^{\bullet}_{X_{\overline{\bm{\mathcal{C}}}},\mathcal{E}_{\chi}^{\bm{\mathcal{C}}}} \cong p_*\big(\bigoplus_{\zeta \in \bm\mu_n }\IC^{\bullet}_{X_{\overline{\bm C(\zeta)}}}\big)(\chi)   
\end{equation}
and 
\begin{equation}
\label{isomrestriction2}
i_{\overline{\bm {\mathcal{C}}}}^*\left(\IC^{\bullet}_{\overline{\mathcal{C}_1}\times \cdots \times \overline{\mathcal{C}_k},\mathcal{F}^{\bm{\mathcal{C}}}_{\chi}}\right) \cong i_{\overline{\bm {\mathcal{C}}}}^*\left(p^k_*\left(\IC^{\bullet}_{\overline{C_1} \times \cdots \times \overline{C_k}}\right)(\chi)\right) \cong  \left(i_{\overline{\bm {\mathcal{C}}}}^*\left(p^k_*(\IC^{\bullet}_{\overline{C_1} \times \cdots \times \overline{C_k}})\right)\right)(\chi)   
\end{equation}
where the last isomorphism of (\ref{isomrestriction2}) comes from the exactness of the functor $i_{\overline{\bm{\mathcal{C}}}}^*$. 

To construct an isomorphism (\ref{isomrestriction0}) it is enough to find an $A(\bm {\mathcal{C}})$-isomorphism between the complexes 
$$
p_*\left(\displaystyle \bigoplus_{\zeta \in \bm\mu_n }\IC^{\bullet}_{X_{\overline{\bm C(\zeta)}}}\right)\hspace{.5cm}\text{ and }\hspace{.5cm} i_{\bm {\mathcal{C}}}^*\left(p^k_*(\IC^{\bullet}_{\overline{C_1} \times \cdots \times \overline{C_k}})\right).
$$

From \cite[Theorem 4.10, Proposition 4.11]{L} we have an isomorphism
\begin{equation}
\label{restrictionktuples}
 i^*_{\bm C(\zeta)}\left(\IC^{\bullet}_{\overline{C_1} \times \cdots \times \overline{C_k}}\right) \cong  \IC^{\bullet}_{X_{\overline{\bm C(\zeta)}}}  
\end{equation}
for any $\zeta\in\bm\mu$. This isomorphism is $H(\bm{\mathcal{C}})$-equivariant.

Applying the functor $p_*$, we get an isomorphism \begin{equation}
 p_*\left(\bigoplus_{\zeta \in \bm\mu_n }\IC^{\bullet}_{X_{\overline{\bm C(\zeta)}}}\right)  \cong  p_*\left(\bigoplus_{\zeta \in \bm\mu_n}i^*_{\bm C(\zeta)}(\IC^{\bullet}_{\overline{C_1} \times \cdots \times \overline{C_k}})\right) 
\end{equation}
which commutes with the $A(\bm{\mathcal{C}})$ action on both sides.

Moreover, from the proper base change theorem, we get an $A(\bm{\mathcal{C}})$-equivariant isomorphism \begin{equation}
\label{isomorphismrestriction12}
\displaystyle p_ *\left(\bigoplus_{\zeta \in \bm\mu_n }\IC^{\bullet}_{X_{\overline{\bm C(\zeta)}}}\right) \cong i_{\bm {\mathcal{C}}}^*\left(p^k_*(\IC^{\bullet}_{\overline{C_1} \times \cdots \times \overline{C_k}})\right).    
\end{equation}

\end{proof}

\bigskip

Notice that, for each $\chi \in \widehat{A(\bm{\mathcal{C}})}$, the local system $\mathcal{E}_{\chi}^{\bm{\mathcal{C}}}$ on $X_{\bm{\mathcal{C}}}$ is $\PGl_n$-equivariant  and thus induces a unique local system, denoted again by $\mathcal{E}_{\chi}^{\bm{\mathcal{C}}}$, on the character stack $\mathcal{M}_{\bm{\mathcal{C}}}$.

The local systems $\mathcal{E}^{\bm{\mathcal{C}}}_{\chi}$ are the ones coming from the Galois covering $$\overline{p}:\bigsqcup_{\zeta \in \bm\mu_n}\mathcal{M}_{\bm C(\zeta)} \to \mathcal{M}_{\bm{\mathcal{C}}} ,$$ i.e.  we have an isomorphism \begin{equation}
    \overline{p}_*(\kappa) \cong \bigoplus_{\chi \in \widehat{A(\bm{\mathcal{C}})}} V_\chi\otimes\mathcal{E}^{\bm{\mathcal{C}}}_{\chi}.
\end{equation}

In this section, we will study the mixed Poincaré polynomial for the intersection cohomology $IH_c\big(\mathcal{M}_{\overline{\bm{\mathcal{C}}}},\mathcal{E}^{\bm{\mathcal{C}}}_{\chi};q,t\big)$.

\bigskip

Let $\Res$ be the restriction morphism $$\Res:\widehat{A(\bm{\mathcal{C}})} \to \widehat{H(\bm{\mathcal{C}})} .$$ In the following, we identify $\widehat{H(\bm{\mathcal{C}})}$ with a subset of $\widehat{A(\bm{\mathcal{C}})}$ through the isomorphism of Lemma \ref{splitting-exact-sequence}. Notice that, through this identification, for each $\chi \in \widehat{H(\bm{\mathcal{C}})}$, we have $\Res(\chi)=\chi$.

\bigskip 
From the properties of the map $\overline{p}$, we see that \begin{equation}
    \mathcal{E}^{\bm{\mathcal{C}}}_{\chi}\cong \mathcal{E}^{\bm{\mathcal{C}}}_{\chi'}
\end{equation}
if $\Res(\chi)=\Res(\chi')$. 

It is thus enough to describe the intersection cohomology $IH_c\big(\mathcal{M}_{\overline{\bm{\mathcal{C}}}},\mathcal{E}^{\bm{\mathcal{C}}}_{\chi};q,t\big)$ for the characters $\chi \in \widehat{H(\bm{\mathcal{C}})}$. 

Since the map $\displaystyle \overline{p}:\bigsqcup_{\zeta \in I(\bm{\mathcal{C}})}\mathcal{M}_{\bm{C}(\zeta)} \to \mathcal{M}_{\bm{\mathcal{C}}}$ is a Galois covering with Galois group $H(\bm{\mathcal{C}})$ by Proposition \ref{geometry-character-stacks}, we have $$\overline{p}_*(\kappa) \cong \bigoplus_{\chi \in \widehat{H(\bm{\mathcal{C}})}} V_\chi\otimes\mathcal{E}^{\bm{\mathcal{C}}}_{\chi} .$$

From Lemma \ref{corollary-small-map}, we deduce that we have \begin{equation}
   \overline{p}_*\left(\bigoplus_{\zeta \in I(\bm{\mathcal{C}})} \IC^\bullet_{\mathcal{M}_{\overline{\bm C(\zeta)}}}\right) \cong \bigoplus_{\chi \in \widehat{H(\bm{\mathcal{C}})}}V_\chi\otimes\IC^\bullet_{\mathcal{M}_{\overline{\bm{\mathcal{C}}}},\mathcal{E}^{\bm{\mathcal{C}}}_{\chi}}
\end{equation}

Taking hypercohomology, we have thus an equality 

\begin{equation}
\label{decomposition-cohomology}
\bigoplus_{\zeta \in I(\bm{\mathcal{C}})} IH^*_c\big(\mathcal{M}_{\overline{\bm C(\zeta)}}\big) \cong \bigoplus_{\chi \in \widehat{H(\bm{\mathcal{C}})}} V_\chi\otimes IH_c^*\big(\mathcal{M}_{\overline{\bm{\mathcal{C}}}},\mathcal{E}^{\bm{\mathcal{C}}}_{\chi}\big), 
\end{equation}
Since $H(\bm{\mathcal{C}})$ is abelian, the multiplicity space $V_\chi$ is of dimension $1$ and so the space  $IH_c\big(\mathcal{M}_{\overline{\bm{\mathcal{C}}}},\mathcal{E}^{\bm{\mathcal{C}}}_{\chi}\big)$ can be identified with the subspace of $\displaystyle \bigoplus_{\zeta \in I(\bm{\mathcal{C}})} IH^*_c\big(\mathcal{M}_{\overline{\bm C(\zeta)}}\big)$ on which $H(\bm{\mathcal{C}})$ acts by the character $\chi$. 
\bigskip

From the inversion formula in the character ring of $H(\bm{\mathcal{C}})$, we have

\begin{equation}
\label{inversion-formula-cohomology}
IH_c\big(\mathcal{M}_{\overline{\bm{\mathcal{C}}}},\mathcal{E}^{\bm{\mathcal{C}}}_{\chi};q,t\big)=\sum_{\zeta \in I(\bm{\mathcal{C}}) }\dfrac{1}{|H(\bm{\mathcal{C}})|}\sum_{y \in H(\bm{\mathcal{C}})}IH^y_c(\mathcal{M}_{\overline{\bm C(\zeta)}};q,t)\chi(y).    
\end{equation}

\bigskip

\begin{teorema}
\label{theoremEI}
Let $\zeta \in I(\bm{\mathcal{C}})$, $y=(y_1,\dots,y_k) \in H(\bm{\mathcal{C}})$ and for each $i=1,\dots,k$, denote by $o(y_i)$ the order of $y_i$. We have \begin{equation}
\label{formulaEI-twisted}
    IE^{y}(\mathcal{M}_{\overline{\bm C(\zeta)}};q)=q^{d_{\bm{\omega}}}\mathbb{H}_{\bm \omega_{o(y)}}\left(\sqrt{q},\dfrac{1}{\sqrt{q}}\right),
\end{equation}    
where $\bm{\omega}_{o(y)}$ is the multi-type defined by Formula (\ref{o(y)}). 
\end{teorema}
\begin{proof} The theorem follows from Theorem \ref{twisted-teo} as the character stack $\mathcal{M}_{\overline{\bm{C}(\zeta)}}$ is isomorphic to the character stack defined from the generic $k$-tuple of conjugacy classes $(C_1,\dots,C_{k-1},\zeta\lambda_{\bm{C}}^{-1}C_k)$ which is of same type $\bm{\omega}$ as $\bm{C}$(types do not depend on eigenvalues).
\end{proof}

Notice that the right-hand side of the formula does not depend on $\zeta\in I(\bm{\mathcal{C}})$.
\bigskip

We conjecture the following identity:

\begin{conjecture}
\label{conjIHy}
For every $\zeta \in I(\bm{\mathcal{C}}) $ and every $y \in H(\bm{\mathcal{C}})$, we have \begin{equation}\label{formulaHI-twisted}
    IH^{y}_c(\mathcal{M}_{\overline{\bm C(\zeta)}};q,t)=(qt^2)^{d_{\bm \omega}}\mathbb{H}_{\bm \omega_{o(y)}}\left(-t\sqrt{q},\dfrac{1}{\sqrt{q}}\right) 
    \end{equation}
\end{conjecture}

\bigskip

In what follows, put $\iota=\iota(\bm{\mathcal{C}})$, $d'=d'(\bm{\mathcal{C}})$, $d_i=d(\mathcal{C}_i)=|A(\mathcal{C}_i)|$ and $d_i'=\iota/d_i$ (see \S \ref{description-section}). Moreover, put

\begin{equation}
\zeta_{\iota}=(\zeta_n)^{\frac{n}{\iota}}
\label{zeta}\end{equation}
where $\zeta_n$ is a fixed generator of $\bm\mu_n\subset K^\times$. 

Notice that $H'(\bm{\mathcal{C}})=\langle \zeta_\iota \rangle \text{    and     } A(\mathcal{C}_i)=\langle \zeta_\iota^{d_i'} \rangle \text{ for all } i.$ 

For any $\chi_1 \boxtimes \cdots \boxtimes \chi_k \in \widehat{A(\bm{\mathcal{C}})}$ and any $i \in \{1,\dots,k\}$, let $s_{\chi_i} \in \{0,\dots,d_i-1\}$ be the integer such that $$\chi_i((\zeta_\iota)^{d_i'})=(\zeta_\iota)^{s_{\chi_i} d'_i} $$where, by notation abuse, $\zeta_\iota\in\kappa^\times$ is defined by (\ref{zeta}) with $\zeta_n$ a fixed generator of $\bm\mu_n\subset \kappa^\times$.

Put

$$s_{\chi} \coloneqq (s_{\chi_1},\dots,s_{\chi_k}).$$

\bigskip

Let $\phi:\N_{>0} \to \N$ be the Euler function. For any $m \in \N_{>0}$, denote by $C_m$ the coefficient of  $x^{\phi(m)-1}$ in the $m$-th cyclotomic polynomial in the variable $x$, i.e. 
\begin{equation}C_m=\sum_{\gcd(r,m)=1}(\zeta_m)^r,
\label{cyclo}\end{equation} where $\zeta_m$ is a primitive $m$-th root of unity (in $K^\times$ or $\kappa^\times$). We will need the following.

\begin{lemma}
\label{lemma-cyclotomic}
Let $\chi \in \widehat{\bm\mu_m}$ be a character and $s \in \{0,\dots,m-1\}$, such that $\chi(\zeta_m)=(\zeta_m)^s$.  For any $l \mid m$, we have that:
\begin{equation}
\label{formula-cyclotomic}
\sum_{\substack{h \in \bm\mu_m \\ \ord(h)=l}}\chi(h)=\phi(l)\dfrac{C_{\frac{l}{\gcd(l,s)}}}{\phi\left(\frac{l}{\gcd(l,s)}\right)}    
\end{equation}

\end{lemma}

\begin{proof}
We have  \begin{equation}\sum_{\substack{h \in \bm\mu_m \\ \ord(h)=l}}\chi(h)=\sum_{\substack{h \in \bm\mu_m \\ \ord(h)=l}}h^s=\sum_{\substack{e \in \{0,\dots,l\} \\ \gcd(e,l)=1}}(\zeta_m)^{\frac{mse}{l}}.
\end{equation}   

Let $l'=\dfrac{l}{\gcd(l,s)}$. Notice that, for each $e \in \{0,\dots,l\}$ such that $\gcd(e,l)=1$, the element $(\zeta_m)^{\frac{mse}{l}}$ is a primitive $l'$-th root of unity. More precisely, we have a surjective map 

$$
\theta:  \{e \in \{0,\dots,l\} \ | \ \gcd(e,l)=1\} \to \{\text{Primitive } l'-\text{th root of unity}\} $$

$$e \longmapsto  (\zeta_m)^{\frac{mse}{l}} .$$

Since each fiber of $\theta$ has cardinality $\dfrac{\phi(l)}{\phi(l')}$, we deduce formula (\ref{formula-cyclotomic}).
\end{proof}

\bigskip

For any $s=(s_1,\dots,s_k) \in \N^k$ and $r=(r_1,\dots,r_k)\in\N_{>0}^k$, put $$\Delta^{s}_r\coloneqq \phi(r_1) \cdots \phi(r_k)\sum_{j=0}^{d'-1} \prod_{i=1}^k \dfrac{C_{\frac{r_i}{\gcd(r_i,s_i+j)}}}{\phi\left(\frac{r_i}{\gcd(r_i,s_i+j)}\right)} .$$

Put $$R_{d_1,\dots,d_k}\coloneqq\left.\left\{r=(r_1,\dots,r_k) \in \N_{>0}^k \ \right| \ r_i \mid d_i \text{ for all } i\right\} .$$

Recall that $\bm\omega\in(\mathbb{T}_n^o)^k$ is the type of the $k$-tuple $\bm C$ of $\Gl_n(K)$-conjugacy classes above $\bm{\mathcal{C}}$. 

For any $r \in R_{d_1,\dots,d_k}$, put $$\bm \omega_r \coloneqq \left(\psi_{r_1}\left(\dfrac{\omega}{r_1}\right),\dots,\psi_{r_k}\left(\dfrac{\omega}{r_k}\right)\right) ,$$

where the $\dfrac{\omega_i}{r_i}$'s are the types introduced in Remark \ref{type-division}.

\bigskip

\begin{remark}
Notice that, for any $y \in A(\bm{\mathcal{C}})$, we have  $o(y) \in R_{d_1,\dots,d_k}$ and $\bm \omega_{o(y)}$ is the multi-type already introduced in \cref{section-twisted-mixed}.
\end{remark}

\bigskip

We have the following.

\begin{teorema}
\label{E-polynomial-general}
For any $\chi \in \widehat{A(\bm{\mathcal{C}})}$, we have
\begin{equation}
\label{edIE-theorem}
IE\left(\mathcal{M}_{\overline{\bm{\mathcal{C}}}},\mathcal{E}^{\bm{\mathcal{C}}}_{\chi};q\right)=\dfrac{q^{d_{\bm \omega}}\iota(\bm{\mathcal{C}})}{|A(\bm{\mathcal{C}})|}\sum_{r \in R_{d_1,\dots,d_k}}\Delta_{r}^{s_{\chi}}\,\mathbb{H}_{\bm \omega_r}\left(\sqrt{q},\dfrac{1}{\sqrt{q}}\right).    
\end{equation}
where $d_i=|A(\mathcal{C}_i)|$ and $\iota(\bm{\mathcal{C}})$ is the number of irreducible components of $\mathcal{M}_{\overline{\bm{\mathcal{C}}}}$ (see Proposition \ref{proposition-DM}).

\end{teorema}

\begin{proof}
From Formula (\ref{inversion-formula-cohomology}) and Formula (\ref{formulaEI-twisted}) we have

\begin{align*}
IE\left(\mathcal{M}_{\overline{\bm{\mathcal{C}}}},\mathcal{E}^{\bm{\mathcal{C}}}_{\chi};q\right)&=q^{d_{\bm{\omega}}}\sum_{\zeta\in I(\bm{\mathcal{C}})}\frac{1}{|H(\bm{\mathcal{C}})|}\sum_{y\in H(\bm{\mathcal{C}})}\mathbb{H}_{\bm{\omega}_{o(y)}}\left(\sqrt{q},\frac{1}{\sqrt{q}}\right)\chi(y)\\
&=\frac{q^{d_{\bm{\omega}}}\, |I(\bm{\mathcal{C}})|}{|H(\bm{\mathcal{C}})|}\sum_{y\in H(\bm{\mathcal{C}})}\mathbb{H}_{\bm{\omega}_{o(y)}}\left(\sqrt{q},\frac{1}{\sqrt{q}}\right)\chi(y)\\
&=\frac{q^{d_{\bm{\omega}}}\,\iota(\bm{\mathcal{C}})}{|H(\bm{\mathcal{C}})|}\sum_{r\in R_{d_1,\dots,d_k}}\mathbb{H}_{\bm{\omega}_r}\left(\sqrt{q},\frac{1}{\sqrt{q}}\right)\sum_{\substack{y\in H(\bm{\mathcal{C}})\\ o(y)=r}}\chi(y)
\end{align*}
To show Formula (\ref{edIE-theorem}) above, it is enough to show that, for any $r \in R_{d_1,\dots,d_k}$, we have 
\begin{equation}
\sum_{\substack{y \in H(\bm{\mathcal{C}}) \\ o(y)=r}}\chi(y)=\dfrac{\Delta_{r}^{s_{\chi}}}{|H'(\bm{\mathcal{C}})|}.    
\end{equation}

From Lemma \ref{splitting-exact-sequence}, we see that 
\begin{equation}
\sum_{\substack{y \in H(\bm{\mathcal{C}}) \\ o(y)=r}}\chi(y)=\dfrac{1}{|H'(\bm{\mathcal{C}})|}\sum_{\chi'\in \widehat{H'(\bm{\mathcal{C}})}}\,\sum_{\substack{y \in A(\bm{\mathcal{C}}) \\ o(y)=r}}(\chi\boxtimes\chi')(y)    
\end{equation}

Notice that, for any $\chi'\in \widehat{H'(\bm{\mathcal{C}})}$, we have 
\begin{equation}
\sum_{\substack{y \in A(\bm{\mathcal{C}}) \\ o(y)=r}}(\chi\boxtimes\chi')(y)=\prod_{i=1}^k \left(\sum_{\substack{y_i \in A(\mathcal{C}_i) \\ o(y_i)=r_i}}\chi_i\chi'(y_i)\right).    
\end{equation}

Formula (\ref{edIE-theorem}) is thus a consequence of Lemma \ref{lemma-cyclotomic}.

\end{proof}

\bigskip

We make the following conjecture.

\begin{conjecture}
\label{IH-conjecture-thm}
For any $\chi \in \widehat{A(\bm{\mathcal{C}})}$, we have
\begin{equation}
\label{IHconjecture}
IH_c\big(\mathcal{M}_{\overline{\bm{\mathcal{C}}}},\mathcal{E}^{\bm{\mathcal{C}}}_{\chi};q,t\big)=\dfrac{\iota(\bm{\mathcal{C}})(qt^2)^{d_{\bm \omega}}}{|A(\bm{\mathcal{C}})|}\sum_{r \in R_{d_1,\dots,d_k}}\Delta_{r}^{s_{\chi}}\,\mathbb{H}_{\bm \omega_r}\left(-t\sqrt{q},\dfrac{1}{\sqrt{q}}\right).    
\end{equation}    

\end{conjecture}

Our conjecture reduces to a conjectural statement on $\Gl_n$.

\begin{teorema}The above formula (\ref{IHconjecture}) is a consequence of Conjecture \ref{conjIHy}.

\label{redGL}\end{teorema}

\begin{proof}We use Formula (\ref{inversion-formula-cohomology}) and we proceed exactly as in the proof of Theorem \ref{E-polynomial-general} using the conjectural formula  (\ref{formulaHI-twisted}) instead of (\ref{formulaEI-twisted}).
\end{proof}

The following result follows from the above theorem together with the results of \S \ref{k=4}.

\begin{teorema}The formula (\ref{IHconjecture}) is true if $n=2$, $k=4$ and if the conjugacy classes are semisimple regular.
    
\label{casek=4}\end{teorema}

\subsection{$\mathrm{PGL}_n(\C)$-character stacks for Riemann surfaces of higher genus}

For any integer $g \geq 0$ and any Riemann surface $X$ of genus $g$, we can give a similar definition of the associated $\PGl_n$- representation variety $X_{\overline{\bm{\mathcal{C}}}}$ and $\PGl_n$-character stack $\mathcal{M}_{\overline{\bm{\mathcal{C}}}}$, i.e.  $$X_{\overline{\bm {\mathcal{C}}}}\coloneqq \left\{(A_1,B_1,\dots,A_g,B_g,X_1,\dots,X_k) \in \PGl_n^{2g} \times  \overline{\mathcal{C}_1} \times \cdots \times \overline{\mathcal{C}_k} \ \left| \ \prod_{i=1}^g [A_i,B_i]X_1 \cdots X_k=1\right\}\right. $$ and $$\mathcal{M}_{\overline{\bm{\mathcal{C}}}}=[X_{\overline{\bm {\mathcal{C}}}}/\PGl_n] $$ and similarly for the stacks $\mathcal{M}_{\overline{\bm C(\zeta)}}$. The stack $\mathcal{M}_{\overline{\bm{\mathcal{C}}}}$ parametrizes local systems on $X\setminus D$, where $D=\{x_1,\dots,x_k\}$ is a subset of $k$ points, with local monodromy around the points $x_1,\dots,x_k$ lying in $\overline{\mathcal{C}_1},\dots,\overline{\mathcal{C}}_k$ respectively. 

\bigskip

Given any $\chi \in \widehat{A(\bm{\mathcal{C})}}$, there is an associated local system $\mathcal{E}^{\bm{\mathcal{C}}}_{\chi}$ on $\mathcal{M}_{\overline{\bm{\mathcal{C}}}}$. Similarly to what was observed in \cite[Theorem 2.2.12]{HRV} in the case of central monodromy, it can be shown that 

\begin{equation}
\label{decomposition-cohomologyg1}
\bigoplus_{\zeta \in I(\bm{\mathcal{C}})} IH^*_c\big(\mathcal{M}_{\overline{\bm C(\zeta)}}\big) \cong \bigoplus_{\chi \in \widehat{H(\bm{\mathcal{C}})}} V_\chi\otimes IH_c^*\big(\mathcal{M}_{\overline{\bm{\mathcal{C}}}},\mathcal{E}^{\bm{\mathcal{C}}}_{\chi}\big) \otimes H^{\bullet}_c(\mathbb{G}_m)^{\otimes 2g}.
\end{equation}
\bigskip

A line of reasoning entirely analogous to that adopted in genus 0 shows thus that, for any $g >0$, we have

\begin{teorema}
\label{IEg>0}
For any generic $\bm{\mathcal{C}}$, we have the following:
$$ IE\left(\mathcal{M}_{\overline{\bm{\mathcal{C}}}},\mathcal{E}^{\bm{\mathcal{C}}}_{\chi};q\right)=\dfrac{q^{d_{\bm \omega}}\iota(\bm{\mathcal{C}})}{(q-1)^{2g}|A(\bm{\mathcal{C}})|}\sum_{r \in R_{d_1,\dots,d_k}}\Delta_{r}^{s_{\chi}}\,\mathbb{H}_{g,\bm \omega_r}\left(\sqrt{q},\dfrac{1}{\sqrt{q}}\right).  $$
\end{teorema}
Similarly, the extension of Conjecture \ref{IH-conjecture-thm} to genus $>0$ is the following one.
\begin{conjecture}
\label{IHg>0}
For any generic $\bm{\mathcal{C}}$, we have the following:
$$IH_c\big(\mathcal{M}_{\overline{\bm{\mathcal{C}}}},\mathcal{E}^{\bm{\mathcal{C}}}_{\chi};q,t\big)=\dfrac{\iota(\bm{\mathcal{C}})(qt^2)^{d_{\bm \omega}}}{(qt^2+t)^{2g}|A(\bm{\mathcal{C}})|}\sum_{r \in R_{d_1,\dots,d_k}}\Delta_{r}^{s_{\chi}}\,\mathbb{H}_{g,\bm \omega_r}\left(-t\sqrt{q},\dfrac{1}{\sqrt{q}}\right). $$
\end{conjecture}
\section{Geometric induction, character-sheaves, duality}

Assume that $G$ is a connected reductive algebraic group over $K$, $T$ is a maximal torus, $B\supset T$ a Borel subgroup and $W$ the Weyl group of $G$ with respect to $T$. 

Put

$$
\car:=T/\!/ W
$$
for the GIT quotient of $T$ by $W$.

\subsection{Geometric induction}\label{geometric-induction}

We consider the following morphism of correspondences

$$
\xymatrix{&&[B/B]\ar[rrd]^p\ar[lld]_{q'}\ar[d]^{(q',p)}&&\\
T&&S:=T\times_\car[G/G]\ar[rr]_-{\pr_2}\ar[ll]^-{\pr_1}&&[G/G]}
$$
where $[B/B]$ and $[G/G]$ denote the quotient stacks for the conjugation action.
\bigskip

We have functors between categories of perverse sheaves
(see \cite{BY}\cite[\S 2.9]{laumonletellier2})
$$
{\rm Ind}:=\Perv(T)\rightarrow\Perv([G/G]),\hspace{1cm}K\mapsto p_*q'{^!}(K)[{\rm dim}\, T)]({\rm dim}\, T).
$$

$$
\xymatrix{{\rm Res}:=\Perv([G/G])\rightarrow\Perv(T),\hspace{1cm}K\mapsto {^p }\mathcal{H}^0\left(q'_!p^*(K)[-{\rm dim}\, T](-{\rm dim}\, T)\right).}
$$
Since the morphism $(q',p)$ is small, we have

$$
(q',p)_!\overline{\Q}_\ell=\IC^\bullet_{S,\overline{\Q}_\ell}
$$
and so from the projection formulas we have

\begin{align*}
&{\rm Ind}(K)=\pr_{2\, *}\underline{\rm Hom}\left(\IC^\bullet_{S,\overline{\Q}_\ell},\pr_1^!(K)\right)[{\rm dim}\, T]({\rm dim}\, T).\\
&{\rm Res}(K)={^p}\mathcal{H}^0\left(\pr_{1\, !}\left(\IC^\bullet_{S,\overline{\Q}_\ell}\otimes\pr_2^*(K)\right)[-{\rm dim}\, T](-{\rm dim}\, T)\right).
\end{align*}

Consider the quotient map

$$
\pi:T\rightarrow [T/W]
$$
and the following commutative diagram

$$
\xymatrix{T\ar[d]_\pi&&S\ar[rr]^{\pr_2}\ar[ll]_{\pr_1}\ar[d]&&[G/G]\\
[T/W]&&[S/W]\ar[ll]_{\pr_1}\ar[rru]_{\pr_2}&&}
$$

Then (see \cite[Proposition 2.21]{laumonletellier2}) the functors ${\rm Ind}$ and ${\rm Res}$ factorise as

 $$
 {\rm Ind}=\Irm\circ \pi_*,\hspace{1cm}{\rm Res}=\pi^*\circ\Rrm
 $$
where

$$
\Irm:\Perv([T/W])\rightarrow\Perv([G/G]),\hspace{1cm}K\mapsto \pr_{2\, *}\underline{\rm Hom}\left(\IC^\bullet_{[S/W],\overline{\Q}_\ell},\pr_1^!(K)\right)[{\rm dim}\, T]({\rm dim}\, T).$$

$$
\Rrm:\Perv([G/G])\rightarrow\Perv([T/W]),\hspace{0.5cm}K\mapsto {^p}\mathcal{H}^0\left(\pr_{1\, !}\left(\IC^\bullet_{[S/W],\overline{\Q}_\ell}\otimes\pr_2^*(K)\right)[-{\rm dim}\, T](-{\rm dim}\, T)\right).$$
We have the following result (see \cite[\S 7.2]{laumonletellier2}).

\begin{teorema} The adjunction map

$$
\Rrm\circ\Irm\rightarrow 1
$$
is an isomorphism.

If $G$ is of type $A$ with connected center then $\Irm$ is an equivalence of categories with inverse functor $\Rrm$.
\label{I}\end{teorema}

\begin{remark} Let $(\overline{\Q}_\ell)_1$ be the skyscrapper sheaf on $T$ supported by $1$. We have a decomposition, see Decomposition (\ref{decomposition-BW})  

$$
\pi_*((\overline{\Q}_\ell)_1)=\bigoplus_{\chi\in\widehat{W}}V_\chi\otimes \mathcal{L}^{B(W)}_\chi.
$$
Then (reformulation of Borho-MacPherson's construction of Springer correspondence \cite[\S 6.2]{shoji})

$$
\Irm\big(\mathcal{L}_\chi^{B(W)}\big)=\IC^\bullet_{\overline{C}_\chi,\mathcal{E}_\chi}[{\rm dim}\, C_\chi]
$$
for some unipotent conjugacy class $C_\chi$ and some irreducible $G$-equivariant local system $\mathcal{E}_\chi$ on $C_\chi$ (if $\chi=1$ then $C_\chi$ is the unipotent regular conjugacy class and $\mathcal{E}_\chi$ the constant sheaf).
\label{Springerco}\end{remark}
\bigskip

Assume that $L\supset T$ is a Levi factor of some parabolic subgroup $P$ of $G$ and let $W_L$ be the Weyl group of $L$ with respect to $T$. 

Consider the correspondence

$$
\xymatrix{[L/L]&&[P/P]\ar[rr]^p\ar[ll]_q&&[G/G]}
$$
with corresponding induction functor (see \cite{BY})

$$
{\rm Ind}_{[L/L]}^{[G/G]}:\Perv([L/L])\rightarrow\Perv([G/G]),\hspace{.5cm}K\mapsto p_*q^!(K)
$$
The following diagram commutes

$$
\xymatrix{\Perv([L/L])\ar[rr]^{{\rm Ind}_{[L/L]}^{[G/G]}}&&\Perv([G/G])\\
\Perv([T/W_L])\ar[u]^{\Irm}\ar[rr]^{(\pi_L)_*}&&\Perv([T/W])\ar[u]_{\Irm}}
$$
where 

$$
\pi_L:[T/W_L]\rightarrow[T/W]
$$
is the map induced by the inclusion $W_L\subset W$.

\subsection{Character-sheaves}
 Assume that $G$ and $T$ are defined over $\F_q$, with geometric Frobenius $F:T \to T$. A \textit{Kummer local system} $\mathcal{E}$ is a $\overline{\Q}_{\ell}$-local system on $T$ such that $\mathcal{E}^{\otimes m}\simeq\overline{\Q}_\ell$ for some $m \in \N$ such that $(m,q)=1$. Notice that in particular every Kummer local system is of rank $1$ and thus simple.
 
 For any $F$-stable Kummer local system $\mathcal{E}$ on $T$, the characteristic function $\X_{\mathcal{E}}$ (with respect to the natural $F$-equivariant structure with is the identity on stalks at $1$) is a linear character of the finite group $T^F$ and any linear character of $T^F$ is obtained in this way, i.e. 
 
\begin{prop}\cite[Proposition 2.3.1]{mars}
\label{bijectionkummer}
The map $\mathcal{E} \to \X_{\mathcal{E}}$ is an isomorphism between the group of $F$-stable isomorphism classes of Kummer local systems on $T$ and the group $\widehat{T^F}$ of linear characters of $T^F$.  
\end{prop}

The Kummer local systems are the character-sheaves on $T$.

\begin{esempio}
\label{examplekummer}
Consider $T=\mathbb{G}_m$ with the Frobenius $F(x)=x^q$ for $x \in \mathbb{G}_m$. In this case, we have $T^F=\F_q^\times$. Consider a linear character $\alpha:\F_q^\times \to \C^*$ and let $n$ be the order of $\alpha$. In particular, $n$ divides $q-1$. Fix a surjection $q_n:\F_q^\times \to \Z/n\Z$ (by sending a generator $\zeta$ of the cyclic group $\F_q^\times$ to its subgroup of order $n$ generated by  $\zeta^{\frac{q-1}{n}}$). Since $\alpha^n=1$, there exists a linear character $\mu:\Z/n\Z \to \C^*$ such that $\mu \circ q_n=\alpha$.

Consider now the $\Z/n\Z$-Galois cover $f_n:\mathbb{G}_m \to \mathbb{G}_m$ given by $f_n(z)=z^n$. We have a splitting $$(f_n)_*(\overline{\Q}_{\ell})=\bigoplus_{\xi \in \widehat{\Z/n\Z}} \mathcal{E}_{\xi} .$$

Since $f_n$ commutes with $F$, the local systems $\mathcal{E}_{\xi}$ are defined over $\F_q$ and have thus a canonical $F$-equivariant structure such that $\X_{\mathcal{E}_{\mu}}=\alpha$.
\end{esempio}
\bigskip

The character-sheaves we are considering in this article are the direct summands of the perverse sheaves of the form ${\rm Ind}(\mathcal{E}[{\rm dim}\, T])$ where $\mathcal{E}$ runs over the Kummer local systems on $T$. Equivalently, they are the perverse sheaves on $G$ of the form $\Irm(\overline{\mathcal{E}}[{\rm dim}\,T])$ where $\overline{\mathcal{E}}$ is a direct summand of $\pi_*(\mathcal{E})$ for some Kummer local system $\mathcal{E}$ on $T$.
\bigskip

If $G=\Gl_n$ or $\PGl_n$, we obtain all the character-sheaves on $G$ defined by Lusztig in this way. However for $G=\Sl_n$ this is not true any more.

We denote by ${\rm CS}_o(G)$ the set of isomorphism classes of character-sheaves on $G$ obtained from a Kummer local system on $T$.

\subsection{Langlands correspondence over finite fields}\label{LLD}
We assume that $G$, $T$ and $B$ are defined over $\F_q$ with geometric Frobenius $F$. We assume that $T$ is split (i.e. $T^F\simeq (\F_q^\times)^{{\rm dim}\, T}$). The Frobenius $F$ acts trivially on the Weyl group with respect to $T$. Denote by $X(T)$ the character group and by $Y(T)$ the co-character group. 
\bigskip

Let $G^\flat$ be another connected reductive group together $\F_q$ and by notation abuse we still denote by $F$ the corresponding geometric Frobenius on $G^\flat$. We let $B^\flat$ be an $F$-stable Borel subgroup of $G^\flat$ containing $T^\flat$. 
\bigskip

We say that $(G,F)$ and $(G^\flat,F)$ are in \emph{duality} (see \cite[Definition 5.21]{DL}) if there exists an isomorphism $\tau:X(T)\rightarrow Y(T^\flat)$ which takes simple roots (with respect to $B$) to simple coroots (with respect to $B^\flat$) and which is compatible with the action of the Galois group ${\rm Gal}(\overline{\F}_q/\F_q)$. The isomorphism $\tau$ is then compatible with the actions of the Weyl groups.
\bigskip

Fix an isomorphism $\overline{\F}_q^\times\simeq(\Q/\Z)_{p'}$ (where $p$ is the characteristic of $\F_q$) and an identification of $(\Q/\Z)_{p'}$ with the $n$-th roots of unity of $\overline{\Q}_{\ell}^\times$, with $n \wedge p=1$. We obtain thus a fixed embedding $\overline{\F}_q^\times\hookrightarrow\overline{\Q}_\ell^\times$.

\begin{remark}
\label{remark-isom}
For each $n \in \N$, from  the choice of the isomorphism $\overline{\F}_q^\times\simeq(\Q/\Z)_{p'}$ above, we can define  an isomorphism $$ \psi_{q^n}:\Hom(\F_{q^n}^\times,\overline{\Q}_{\ell}^\times) \to \F_{q^n}^\times$$  as follows. Identify $1/q^n-1$ with the corresponding element of $\F_{q^n}^\times$, through the isomorphism $\overline{\F}_q^\times\simeq(\Q/\Z)_{p'}$ fixed above.

Notice that, for any $\alpha \in \Hom(\F_{q^n}^\times,\overline{\Q}_{\ell}^\times)$, the element $\alpha(1/q^n-1)$ belongs to the $q^n-1$-th roots of unity of $\overline{\Q}_{\ell}^\times$. We put thus

\begin{equation}
    \label{choice}
    \psi_{q^n}(\alpha)=\alpha\left(\dfrac{1}{q^n-1}\right)
\end{equation}
where we are identifying $\F_{q^n}^\times$ with the $(q^n-1)$-th roots of unity of $\overline{\Q}_\ell^\times$ with the embedding fixed above.
\end{remark}

\bigskip

We have  a surjective group homomorphism

$$
Y(T)\rightarrow T^F,\hspace{1cm}y\mapsto y\left(\dfrac{1}{q-1}\right)
$$
where we are identifying $1/q-1$ with the corresponding element of $\F_{q}^\times$, through the isomorphism $\overline{\F}_q^\times\simeq(\Q/\Z)_{p'}$ fixed above.
\bigskip

The restriction of the elements of $\Hom(T,\mathbb{G}_m)$ to $T^F$ defines a surjective morphism

$$
X(T)\rightarrow\widehat{T^F}
$$
where we are identifying $\overline{\F}_q^\times \subseteq \overline{\Q}_{\ell}^\times$, through the fixed embedding above. See \cite[\S 5]{DL} or \cite[Proposition 13.7]{DM} for more details.
\bigskip

Therefore, we deduce the following. 
\begin{prop}
There is an isomorphism
\begin{equation}
 \label{bijection-LLD}   
\Psi:T^F\simeq\widehat{T^\flat{^F}}.
\end{equation}
which is compatible with the action of the Weyl group $W$. 
\end{prop}

\bigskip

\begin{esempio}
\label{GLn-example}
Consider $G=\Gl_n$ with the Frobenius $F((a_{i,j}))=(a_{i,j}^q)$ and let $T \subseteq \Gl_n$ be the torus of diagonal matrices. In this case $(G^{\flat},F^{\flat})=(G,F)$ and $T^{\flat}=T$. The bijection (\ref{bijection-LLD}), or rather $\Psi^{-1}$, has the following explicit expression. Notice that $T^F=(\F_q^\times)^n$ and $\widehat{T^{\flat F}}=\Hom(\F_q^\times,\overline{\Q}_{\ell}^\times)^n$.

Using the isomorphisms introduced in Remark \ref{remark-isom}, for $(\alpha_1,\dots,\alpha_n) \in \Hom(\F_q^\times,\overline{\Q}_{\ell}^\times)^n$, we have that \begin{equation}
    \Psi^{-1}(\alpha_1,\dots,\alpha_n)=(\psi_{q}(\alpha_1),\cdots ,\psi_q(\alpha_n)).
\end{equation}
\end{esempio}

\bigskip

\bigskip

An element $s\in T^F$ defines an $F$-stable skyscraper sheaf $(\overline{\Q}_\ell)_s$ on $T$ but also, via the above isomorphism, a linear character of $T^\flat{^F}$, and so, by Proposition \ref{bijectionkummer}, an $F$-stable Kummer local system $\mathcal{E}_s$ on $T^\flat$.
\bigskip

We now define a bijection between the irreducible constituents of $\pi_*((\overline{\Q}_\ell)_s)$ and $\pi^\flat_*(\mathcal{E}_s)$ respectively on $[T/W]$ and $[T^\flat/W]$. We denote by $W_s$ the stabilizer of $s$ in $W$ and by $W_s^o$ the Weyl group of $C_G(s)^o$ with respect to $T$. Then 

$$
W_s/W_s^o=C_G(s)/C_G(s)^o.
$$
From \cref{finitequotientstacks} we obtain the irreducible constituents of $\pi_*((\overline{\Q}_\ell)_s)$ and $\pi^\flat_*(\mathcal{E}_s)$ as follows.

\bigskip

We decompose $\pi$ as 

$$
T\longrightarrow[T/W_s^o]\longrightarrow[T/W]
$$
The pushforward of $(\overline{\Q}_\ell)_s$ along the first arrow decomposes into irreducible local systems as follows

$$
\bigoplus_{\psi\in\widehat{W_s^o}}V_\psi\otimes(\overline{\Q}_\ell)_{s,\psi}$$
Given $\psi\in\widehat{W_s^o}$, denote by $W_{s,\psi}$ the stabilizer of $\psi$ in $W_s$. By \cref{finitequotientstacks}, the irreducible constituents of the pusforward of $(\overline{\Q}_\ell)_{s,\psi}$ along the map $[T/W_s^o]\rightarrow[T/W]$ are parametrized by the irreducible characters of $W_{s,\psi}/W_s^o$.
\bigskip

Denote by $(\overline{\Q}_\ell)_{s,\psi;\varphi}$ the irreducible constituent corresponding to $\varphi\in\widehat{W_{s,\psi}/W_s^o}$. Similarly we get local systems $\mathcal{E}_{s,\psi;\varphi}$ from the Kummer local system $\mathcal{E}_s$. 

We define the bijection between the irreducible consituents of $\pi_*((\overline{\Q}_\ell)_s)$ and $\pi^\flat_*(\mathcal{E}_s)$ by 

\begin{equation}(\overline{\Q}_\ell)_{s,\psi;\varphi}\mapsto\mathcal{E}_{s,\psi';\varphi}\label{Fourier-Spr}\end{equation}where $\psi'$ denote the tensor product of $\psi$ with the sign character of $W_s^o$. This bijection makes sense because of the following lemma.
\bigskip

\begin{lemma}We have

$$
W_{s,\psi}=W_{s,\psi'}.
$$
\end{lemma}

\begin{proof}We need to check that the sign character of $W_s^o$ is stabilized by any element of $W_s$. This follows from the fact that an element of $W_s$ maps a basis of the root system of $C_G(s)^o$ to an other basis. Since the basis of the root system are all in the same $W_s^o$-orbit, we deduce that the conjugation action of the elements of $W_s$ on $W_s^o$ decomposes as an inner automorphism of $W_s^o$ followed by an automorphism of the Coxeter graph.

\end{proof}

\bigskip

\begin{oss}
Given $s \in T^F$, the perverse sheaf $\Irm\left(\pi_*((\overline{\Q}_\ell)_s)\right)$  has support $[\overline{C}/G]$, where  $C$ is the conjugacy class of a regular element whose semisimple part is $s$ (i.e. $\overline{C}$ is the fiber at $s$ of the Chevalley map $G\rightarrow T/\!/W$). For instance, if $s=1$, the class $C$ is the regular unipotent conjugacy class. Notice that any irreducible $G$-stable closed substack of $[\overline{C}/G]$ is of the form $[\overline{C'}/G]$ with $C'\subseteq \overline{C}$ a conjugacy class.

In particular, an irreducible component of $\Irm\left(\pi_*((\overline{\Q}_\ell)_s)\right)$ must be of the form $\IC^{\bullet}_{\overline{C'},\xi}$, where $C'\subseteq \overline{C}$ and $\xi$ is an $F$-equivariant irreducible $G$-equivariant local system on $C'$.
\end{oss}

\bigskip

We denote by $({\rm LS}_o(G)^F)_{\rm split}$ the set of pairs of the form $(C,\zeta)$ where $C$ is an $F$-stable conjugacy class of $G$ with eigenvalues in $\F_q^\times$ and $\zeta$ an $F$-equivariant $G$-equivariant irreducible local system on $C$ such that the perverse sheaf $\IC^\bullet_{\overline{C},\zeta}[{\rm dim}\, C]$ is the image by 

$$
\Irm:\Perv([T/W])\rightarrow\Perv([G/G])
$$
of an irreducible constituent of $\pi_*((\overline{\Q})_s)$ for some $s\in T^F$.
\bigskip

 On the dual side, we denote by $({\rm CS}_o(G^\flat)^F)_{\rm split}$ the set of $F$-equivariant character sheaves on $G^\flat$ which are the image by 

$$
\Irm^\flat:\Perv([T^\flat/W])\rightarrow\Perv([G^\flat/G^\flat])
$$
of an irreducible constituent of $\pi^\flat_*(\mathcal{E}_s)$ for some $s\in T^F$.
\bigskip

Using the bijection (\ref{Fourier-Spr}) for all $s$, the functors $\Irm$ and $\Irm^\flat$ (with Theorem \ref{I} in mind), we obtain a bijection (Langlands correspondence over finite fields)

\begin{equation}
\label{langlandsfinite}
\mathfrak{c}_G:({\rm LS}_o(G)^F)_{\rm split}\longrightarrow({\rm CS}_o(G^\flat)^F)_{\rm split}
\end{equation}

\subsection{Langlands correspondence and Levi subgroups}
\label{rmk-Levi}
Consider an $F$-stable Levi subgroup $G \supseteq L \supseteq T$. We denote by  $\Phi_L \subseteq  X(T)$ and $\Phi^{\vee}_L  \subseteq Y(T)$ its corresponding roots and coroots systems. 

The subroot systems $\tau(\Phi_{L})\subseteq Y(T^{\flat})$ and $\tau(\Phi^{\vee}_L) \subseteq X(T^{\flat})$ determine a unique Levi subroup $L^{\flat}$ such that $\Phi_{L^{\flat}}=\tau(\Phi^{\vee}_L)$ and $\Phi^{\vee}_{L^{\flat}}=\tau(\Phi_{L})$. If $L$ is $F$-stable, since $\tau$ commutes with $F$, the subgroup $L^{\flat}$ is $F$-stable too.

Consider an element $s \in T^F$ and the corresponding character $\Psi(s) \in \widehat{T^{\flat F}}$. The argument of \cite[Proposition 11.4.12]{DM} shows the following.
\begin{prop}
\label{proplevitocenter}
We have $s \in Z_L^F$ if and only if $\Psi(s)$ is the restriction of a character $L^{\flat F} \to \overline{\Q}^\times_{\ell}$ which is trivial on $[L^{\flat},L^{\flat}]^F$ .
\end{prop}

\begin{esempio}
\label{example-gln-levilanglands}
Consider $G=L=\Gl_n$, $T$ the torus of diagonal matrices, $s=(s_1,\dots,s_n) \in T^F=(\F_q^\times)^n$ and $\Psi(s)=(\alpha_1,\dots,\alpha_n) \in \Hom(\F_q^\times,\overline{\Q}_{\ell}^\times)^n$. 

Notice that $s \in Z_{\Gl_n}^F$ if and only if $s_i=s_j$ for each $i,j$. Conversely,  $\Psi(s)$ is the restriction of a character $\gamma \circ \det:\Gl_n(\F_q) \to \overline{\Q}_{\ell}^\times$ with $\gamma \in \Hom(\F_q^\times,\overline{\Q}_{\ell}^\times)$ if and only $\alpha_i=\gamma$ for each $i$, i.e. if and only if $\alpha_i=\alpha_j$ for each $i,j$. 

Since $\alpha_i=\psi_q(s_i)$ and $\psi_q$ is an isomorphism, this explains Proposition \ref{proplevitocenter} in this case.
\end{esempio}

\bigskip

 The same type of argument shows the following.

\begin{prop}
\label{propcentertolevi}
We have that $s \in [L,L]^F$  if and only if $\Psi(s)|_{Z_{L^{\flat F}}}$ is trivial.
\end{prop}

\begin{esempio}
Consider the same situation of Example \ref{example-gln-levilanglands} above. Notice that $s \in \Sl_n(\F_q)$ if and only if $s_1 \cdots s_n=1$. Conversely, $\alpha$ is trivial on $Z|_{\Gl_n^F}$ if and only if $\alpha(\lambda I_n)=1$ for every $\lambda \in \F_q^*$ i.e. if and only if $\alpha_1 \cdots \alpha_n=1$. 

Since $\alpha_1 \cdots \alpha_n=\psi_q(s_1\cdots s_n)$ and $\psi_q$ is an isomorphism, this explains Proposition \ref{propcentertolevi} in this case.
\end{esempio}

\subsection{The case $\mathrm{GL}_n$}\label{GL}

The situation of \S \ref{LLD} simplifies a lot in the case of $\Gl_n$ because the stabilisers of the elements of $\Gl_n$ are all connected. Therefore, the only irreducible $\Gl_n$-equivariant local system on conjugacy classes is the constant sheaf. For any semisimple element $s\in T$, the stabilizer $W_s$ of $s$ in $W$ is the Weyl group (with respect to $T$) of the Levi subgroup $L_s:=C_{\Gl_n}(s)$ of $\Gl_n$.
\bigskip

An important property of $\Gl_n$ is that the irreducible characters of $\Gl_n(\F_q)$ are exactly (up to an explicit sign) the characteristic functions of the $F$-stable character-sheaves on $\Gl_n(\overline{\F}_q)$.
\bigskip

Let $C$ be an $F$-stable conjugacy class of $\Gl_n$ and let $su$ be the Jordan decomposition of an element of $C^F$. 
\bigskip

We will need in the case of $\Gl_n$ to deal with non-split character sheaves (or characters), i.e. unlike \S \ref{LLD}, here we do not assume that $s$ leaves in a split maximal torus (which for $\Gl_n$ could be the torus of diagonal matrices).

We thus explain the full correspondence $\mathfrak{c}_{\Gl_n}$ between the $F$-stable conjugacy classes of $\Gl_n(\overline{\F}_q)$ (which is also the set of conjugacy classes of $\Gl_n(\F_q)$) and  the $F$-stable character-sheaves on $\Gl_n(\overline{\F}_q)$ (which is in bijection with the set of the irreducible characters of $\Gl_n(\F_q)$).
\bigskip

The finite group $L_s^F$ is of the form

$$
L_s^F\simeq\prod_{i=1}^r\Gl_{n_i}(\F_{q^{d_i}}).
$$
By Proposition \ref{proplevitocenter}, to $s$ corresponds a unique character $\theta_s \in \Hom(L_s^F,\overline{\Q}_{\ell}^\times)$.

The $L_s^F$ conjugacy class of $u\in L_s^F$ corresponds (by Springer correspondence \cite{Springer}) to an $F$-stable irreducible character of the Weyl group of $L_s$ (with respect to an maximally split $F$-stable  maximal torus of $L_s$). Namely, the element $u$ is $L_s^F$-conjugated to an element $$(J(1,\lambda^1),\dots,J(1,\lambda^r))$$ with $\lambda^i \in \mathcal{P}_{n_i}$.

For a partition $\lambda \in \mathcal{P}_m$, we denote by $\chi_{\lambda}$ the corresponding character of $S_m$. The $L_s^F$-conjugacy class of $u$ corresponds to $$\chi=(\chi_{(\lambda^1)'}, \chi_{(\lambda^2)'},\dots,\chi_{(\lambda^r)'}) .$$ We define a unipotent character $\mathcal{U}_{\chi}$ of $L_s^F$ as 

$$
\mathcal{U}_{\chi}=\mathcal{U}_{\chi_{(\lambda^1)'}}\boxtimes\cdots\boxtimes \mathcal{U}_{\chi_{(\lambda^r)'}}.
$$
Let $R_{L_s}^{\Gl_n}$ denote the Lusztig induction from virtual characters of $L_s^F$ to virtual characters of $G^F$, for more details see \cite[Chapter 9]{DM}. Then by \cite{LS}, the apriori virtual character 

$$
R_C^{\Gl_n}:=\epsilon_sR_{L_s}^{\Gl_n}(\theta_s\otimes\mathcal{U}_\chi)
$$
(where $\epsilon_s=(-1)^{\F_q-\text{rank}(L_s)}$) is a true irreducible character of $\Gl_n(\F_q)$. All irreducible characters of $\Gl_n(\F_q)$ are obtained in this way.
\bigskip

Then $\epsilon_s(-1)^nR_C^{\Gl_n}$ is the characteristic function of the character sheaf $$
\mathcal{X}_C^{\Gl_n}=\mathfrak{c}_{\Gl_n}(C,\overline{\Q}_\ell)$$
on $\Gl_n(\overline{\F}_q)$. 
\bigskip

\begin{esempio}
Under this correspondence, the trivial conjugacy class  (resp. the regular unipotent conjugacy class) of $\Gl_n$ corresponds to the trivial character (resp. the Steinberg character) of $\Gl_n(\F_q)$.
\end{esempio}
\section{The dual pair $(\mathrm{SL}_n,\mathrm{PGL}_n)$}

In this section we assume that $n\mid q-1$.

\subsection{Orbital complexes on $\mathrm{PGL_n}$}\label{orbital}

Assume that $G$ is a connected linear algebraic group over $\overline{\F}_q$ equipped with a geometric Frobenius $F:G\rightarrow G$. Let $C$ be an $F$-stable conjugacy class of $G$ together with an $F$-stable $G$-equivariant irreducible local system $\mathcal{E}$ on $C$. We also fix an $F$-equivariant structure $\phi:F^*(\mathcal{E})\simeq\mathcal{E}$ and we denote again by $\phi$ the induced $F$-equivariant structure on $IC^\bullet_{\overline{C},\mathcal{E}}$. 

\begin{prop} \cite[Proposition 4.4.13]{Let} The set $\{{\bf X}_{IC^\bullet_{\overline{C},\mathcal{E}},\phi}\}$, where $(C,\mathcal{E})$ runs over the pairs as above,  forms a basis of the space $\mathcal{C}(G^F)$ of class functions $G^F/G^F\rightarrow\overline{\Q}_\ell$.
\end{prop}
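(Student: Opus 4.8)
The final proposition claims that the characteristic functions $\mathbf{X}_{IC^\bullet_{\overline{C},\mathcal{E}},\phi}$, as $(C,\mathcal{E})$ ranges over $F$-stable conjugacy classes of $G$ equipped with $F$-stable irreducible $G$-equivariant local systems, form a basis of the space $\mathcal{C}(G^F)$ of class functions on $G^F$.

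\textbf{Proof plan.} The plan is to exhibit these characteristic functions as a \emph{triangular} transition from the basis $\{1_{(C,\mathcal{E})}^F\}$ of ``naive'' characteristic functions of pairs $(C,\mathcal{E})$ — i.e. the functions $x \mapsto \tr(\phi_x, \mathcal{E}_x)$ for $x \in C^F$, extended by zero — with respect to a partial order given by inclusion of Zariski closures of conjugacy classes. First I would recall that the functions $1_{(C,\mathcal{E})}^F$ themselves form a basis of $\mathcal{C}(G^F)$: for a fixed $F$-stable $C$, the pairs $(C,\mathcal{E})$ with $\mathcal{E}$ irreducible $F$-stable correspond (after choosing an $F$-equivariant structure) to the irreducible characters of the finite group $A_F = (A(c))^{F}$-coset data attached to the component group $A(c) = C_G(c)/C_G(c)^\circ$ of the centralizer, via Lang's theorem and the standard parametrization of $G^F$-orbits inside $C^F$; summing over $C$, these exhaust a basis indexed by all pairs $(\mathcal{O}, \psi)$ with $\mathcal{O}$ a $G^F$-orbit and $\psi$ an irreducible character of the relevant stabilizer quotient — equivalently a basis of $\mathcal{C}(G^F)$ by the usual orthogonality/counting argument. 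This is the content cited from \cite[Proposition 4.4.13]{Let}, so I would invoke it.

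\textbf{The triangularity step.} The key geometric input is that for the intersection cohomology complex $IC^\bullet_{\overline{C},\mathcal{E}}$, the restriction to the open stratum $C$ is (a shift of) $\mathcal{E}$ itself, and the stalks of $IC^\bullet_{\overline{C},\mathcal{E}}$ are supported on $\overline{C}$; hence
\[
\mathbf{X}_{IC^\bullet_{\overline{C},\mathcal{E}},\phi} = \pm\, 1_{(C,\mathcal{E})}^F + \sum_{C' : \overline{C'} \subsetneq \overline{C}} (\text{terms supported on } (C')^F).
\]
Thus the matrix expressing $\{\mathbf{X}_{IC^\bullet_{\overline{C},\mathcal{E}},\phi}\}$ in terms of $\{1_{(C,\mathcal{E})}^F\}$ is block-triangular with respect to the closure order on conjugacy classes and, within each diagonal block (fixed $C$), is the identity up to sign (since the open-stratum restriction of $IC^\bullet_{\overline{C},\mathcal{E}}$ is exactly $\mathcal{E}[\dim C]$ and the $F$-structure is the chosen $\phi$). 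A triangular matrix with invertible diagonal blocks is invertible, so $\{\mathbf{X}_{IC^\bullet_{\overline{C},\mathcal{E}},\phi}\}$ is a basis. I would be slightly careful about one point: the lower-order ``correction'' terms on a smaller class $C'$ may involve \emph{several} local systems on $C'$, and one must know these are again class functions supported on $(C')^F$ — which is automatic since $IC$ stalks are $G$-equivariant, hence constant along $G^F$-orbits, so the correction is genuinely a combination of the $1_{(C',\mathcal{E}')}^F$. This keeps us inside the span of the naive basis and confirms the change-of-basis matrix is (upper) triangular in the closure order.

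\textbf{Main obstacle.} The real subtlety is bookkeeping of the $F$-equivariant structures: the characteristic function depends on $\phi$, and to get a \emph{canonical} basis one must fix $\phi$ consistently (as the paper notes, any two differ by a scalar since $IC^\bullet_{\overline{C},\mathcal{E}}$ is simple). The cardinality count — that the number of such pairs $(C,\mathcal{E})$ equals $\dim \mathcal{C}(G^F) = \#\{\text{conjugacy classes of } G^F\}$ — is the arithmetic heart, and it follows from the orbit-counting identity $\#(G^F\text{-orbits in } C^F) = \#\{F\text{-stable irreducible }\mathcal{E}\text{ on } C\}$ via Lang's theorem applied to the component group $A(c)$. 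Since all of this is exactly \cite[Proposition 4.4.13]{Let}, the cleanest route is simply to cite that result; if a self-contained argument is wanted, the triangularity argument above plus this orbit count is the way, with the equivariant-structure bookkeeping being the one place demanding care.
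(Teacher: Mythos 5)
The paper offers no proof of this proposition: it is stated with a bare citation to \cite[Proposition 4.4.13]{Let}, so there is no internal argument to compare against. Your proposal is a correct reconstruction of the standard proof that appears in such references: one first shows that, for each $F$-stable class $C$, the functions $x\mapsto\tr(\phi_x,\mathcal{E}_x)$ supported on $C^F$ form a basis of the class functions on $C^F$ (the counting identity $\#(G^F\text{-orbits in }C^F)=\#\{F\text{-stable irreducible }\mathcal{E}\text{ on }C\}$ coming from Lang's theorem applied to the component group of the centralizer), and then one observes that the change-of-basis matrix to the $\mathbf{X}_{IC^\bullet_{\overline{C},\mathcal{E}},\phi}$ is unitriangular up to signs in the closure order, since the restriction of $IC^\bullet_{\overline{C},\mathcal{E}}$ to the open stratum $C$ is a shift of $\mathcal{E}$ and the remaining stalks live on $\overline{C}\setminus C$. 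Your remark that the correction terms on a smaller class $C'$ are automatically a combination of the naive characteristic functions on $(C')^F$ (by $G$-equivariance of the $IC$ stalks) is the right thing to check and closes the only gap one might worry about; the sign-and-$\phi$ bookkeeping you flag is indeed the fiddly but inessential part. In short, your argument is correct and is essentially the one the cited reference carries out.
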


The above basis is a geometric counterpart of the basis of characteristic functions of conjugacy classes of $G^F$.
\bigskip

We now assume that $G=\PGl_n$. We fix an $F$-stable conjugacy class $\mathcal{C}$ of $\PGl_n$ and $\overline{x}\in\mathcal{C}^F$. We choose $x\in p_n^{-1}(\overline{x})^F$ where $p_n:\Gl_n\rightarrow\PGl_n$ is the quotient, and we denote by $C$ the conjugacy class of $x$.
\bigskip

Recall that the restriction of $p_n$ to $C$ is an $A(\mathcal{C})$-Galois covering and that 

\begin{equation}
(p_n)_*(\IC^\bullet_{\overline{C}})=\bigoplus_{\chi\in \widehat{A(\mathcal{C})}}V_\chi\otimes \IC^\bullet_{\overline{\mathcal{C}},\mathcal{L}^{\mathcal{C}}_\chi}.
\label{pIC}\end{equation}
\bigskip

The canonical $F$-equivariant structure $\varphi$ on $\IC^\bullet_{\overline{C}}$ is compatible with the trivial $A(\mathcal{C})$-equivariant structure and so by Formulas (\ref{theta}) and (\ref{equa-wF}) we have

\begin{equation}
 {\bf X}_{\IC^\bullet_{\overline{\mathcal{C}},\mathcal{L}^{\mathcal{C}}_\chi},\varphi_\chi}=\frac{1}{|A(\mathcal{C})|}\sum_{y\in A(\mathcal{C})}\chi(y)\, (p_n^{y F})_*\left({\bf X}_{\IC^\bullet_{\overline{C}},\varphi_y}\right)
\end{equation}
\bigskip

For $y\in A(\mathcal{C})$, let $\alpha\in\overline{\F}_q^\times$ be such that

$$
F(\alpha)=y\alpha,
$$
and let $O_y$ be the $\Gl_n$-conjugacy class of $\alpha x$; it is $F$-stable and the Frobenius $F$ on $C$ corresponds to the Frobenius $y F$ on $C$, i.e. the following diagram commutes

$$
\xymatrix{C\ar[rr]^{z\mapsto\alpha z}\ar[d]_{y F}&&O_y\ar[d]^F\\
C\ar[rr]^{z\mapsto\alpha z}&& O_y}
$$
The $y F$-equivariant complex $(\IC^\bullet_{\overline{C}},\varphi_y)$ on $\overline{C}$ corresponds (under the isomorphism $z\mapsto \alpha z$) to the $F$-equivariant complex $\IC^\bullet_{\overline{O_y}}$ equipped with its natural $F$-equivariant structure which we also denote by $\varphi$.
\bigskip

We deduce the following proposition.

\begin{prop}For any $\chi\in\widehat{A(\mathcal{C})}$ we have
$${\bf X}_{\IC^\bullet_{\overline{\mathcal{C}},\mathcal{L}^{\mathcal{C}}_\chi},\varphi_\chi}=\frac{1}{|A(\mathcal{C})|}\sum_{y\in A(\mathcal{C})}\chi(y)\, (p_n)^F_*\left({\bf X}_{\IC^\bullet_{\overline{O_y}},\varphi}\right).
$$
\label{proj-finite}\end{prop}

\subsection{Character-sheaves on $\mathrm{SL}_n$}\label{CSSL}

In this section, let $T$, $\overline{T}$ and $T'$ be the maximal tori of diagonal matrices respectively of $\Gl_n$, $\PGl_n$ and $\Sl_n$. 

Fix an $F$-stable conjugacy class $C$ of $\Gl_n$ with eigenvalues in $\F_q^*$ and let $su$ be the Jordan decomposition of an element of $C^F$ with $s\in T(\F_q)$.
\bigskip

Let $\mathcal{C}$ be the image of $C$ in $\PGl_n$. Then $\overline{s}=p_n(s)$ is the semisimple part of an element of $\mathcal{C}^F$. Recall, see Formula (\ref{pIC}), that

\begin{equation}
(p_n)_*(\IC^\bullet_{\overline{C}})=\bigoplus_{\chi\in \widehat{A(\mathcal{C})}}V_\chi\otimes \IC^\bullet_{\overline{\mathcal{C}},\mathcal{L}^{\mathcal{C}}_\chi}
\label{proj}\end{equation}
Let $\mathcal{X}_C^{\Gl_n}$ be the character sheaf on $\Gl_n$ corresponding to $(C,\overline{\Q}_\ell)$ under the correspondence $\mathfrak{c}_{\Gl_n}$ and by $\mathcal{X}_{\mathcal{C},\chi}^{\Sl_n}$ the character sheaf on $\Sl_n$ corresponding to $(\mathcal{C},\mathcal{L}^{\mathcal{C}}_\chi)$ under the correspondence $\mathfrak{c}_{\PGl_n}$, see (\ref{langlandsfinite}).

We have the following result which is the dual version of (\ref{proj}).

\begin{teorema} The restriction $\mathcal{X}_C^{\Sl_n}$ of $\mathcal{X}_C^{\Gl_n}$ to $\Sl_n$ decomposes as 

$$
\mathcal{X}_C^{\Sl_n}=\bigoplus_{\chi\in \widehat{A(\mathcal{C}})}V_\chi\otimes \mathcal{X}_{\mathcal{C},\chi}^{\Sl_n}[1].
$$
\label{teoremadual}\end{teorema}

\begin{proof} We first analyse Formula (\ref{pIC}).

Since $\Gl_n$ and $\PGl_n$ are of type $A$ with connected center, by Theorem \ref{I} the complexes $\IC^\bullet_{\overline{C}}$ and $\IC^\bullet_{\overline{\mathcal{C}},\mathcal{L}^{\mathcal{C}}_\chi}$ corresponds to irreducible local systems on $[T/W]$ and $[\overline{T}/W]$ respectively.
\bigskip

More precisely, we have a cartesian diagram

$$
\xymatrix{[\overline{T}/W_L]\ar[rr]^{\overline{\pi}_L}&&[\overline{T}/W]\\[T/W_L]\ar[rr]^{\pi_L}\ar[u]^{p_n}&&[T/W]\ar[u]_{p_n}}
$$
where $L:=C_{\Gl_n}(s)$.
\bigskip

Then the local system on $[T/W]$ corresponding to $\IC^\bullet_{\overline{C}}$ is the local system $(\pi_L)_*((\overline{\Q})_{s,\psi})$ where $(\overline{\Q})_{s,\psi}$ is the direct factor corresponding to $\psi\in \widehat{W_L}$ of the pushforward of the skyscraper sheaf $(\overline{\Q}_\ell)_s$ along the map $T\rightarrow[T/W_L]$. The character $\psi$ corresponds under the Springer correspondence (of Borho-MacPherson) to the conjugacy class of $u$ in $L$.
\bigskip

The local system on $[\overline{T}/W]$ corresponding to $(p_n)_*(\IC^\bullet_{\overline{C}})\in D_c^b([\PGl_n/\PGl_n])$ is 

$$
(\overline{\pi}_L\circ p_n)_*((\overline{\Q}_\ell)_{s,\psi}).
$$
Moreover, $(p_n)_*((\overline{\Q})_{s,\psi})$ is irreducible as no element of the kernel of $T\mapsto\overline{T}$ fixes $s$. In fact

$$
(p_n)_*((\overline{\Q})_{s,\psi})=(\overline{\Q}_\ell)_{\overline{s},\psi}.
$$
We have 

$$
A(\mathcal{C})={\rm Stab}_{W/W_L}((\overline{\Q}_\ell)_{\overline{s},\psi})={\rm Stab}_{W_{\overline{s}}/W_L}(\psi).
$$
As $W=S_n$ and $W_L$ is of the form

$$
W_L=(S_{n_1})^{d_1}\times\cdots\times(S_{n_r})^{d_r}
$$
the group $W_{\overline{s}}/W_L$ is a subgroup of  $\prod_{i=1}^rS_{d_i}$ where each $S_{d_i}$ acts by permutation of the coordinates in $(S_{n_i})^{d_i}$.
\bigskip

We thus have a decomposition

$$
(\overline{\pi}_L)_*((\overline{\Q})_{\overline{s},\psi})=\bigoplus_{\chi\in\widehat{A(\mathcal{C})}}V_\chi\otimes (\overline{\Q})_{\overline{s},\psi;\chi},
$$
for some irreducible local systems $(\overline{\Q})_{\overline{s},\psi;\chi}$ on $[\overline{T}/W]$ which corresponds to $\IC^\bullet_{\overline{\mathcal{C}},\mathcal{L}^{\mathcal{C}}_\chi}$ under the equivalence $$
\Perv([\overline{T}/W])\simeq \Perv([\PGl_n/\PGl_n]).$$

According to \S \ref{LLD}, the skyscraper sheaf $(\overline{\Q}_\ell)_{\overline{s}}$ corresponds to a Kummer local system $\mathcal{E}_{\overline{s}}$ on $T'$ which is $W_L$-equivariant. 
\bigskip

The local system $\mathcal{E}_{\overline{s}}$ is also the restriction to $T'$ of  $\mathcal{E}_s$ on $T$.
\bigskip

As the two local systems $\mathcal{E}_s$ and $\mathcal{E}_{\overline{s}}$ are $W_L$-equivariant, their pushforwards along the maps $T\mapsto [T/W_L]$ and $T'\mapsto [T'/W_L]$ decomposes as direct sum of irreducible local systems $\mathcal{E}_{s,\psi}$ and $\mathcal{E}_{\overline{s},\psi}$ with multiplicity $V_\psi$  where $\psi$ runs over $\widehat{W_L}$, and the restriction of $\mathcal{E}_{s,\psi}$ to $T'$ is $\mathcal{E}_{\overline{s},\psi}$.

As induction commutes with restriction to $\Sl_n$, we thus have

$$
\Irm\left((\pi_L)_*\mathcal{E}_{s,\psi}[{\rm dim}\, T]\right)|_{\Sl_n}=\Irm'\left((\pi'_L)_*\mathcal{E}_{\overline{s},\psi}[{\rm dim}\, T']\right)[1]
$$
where 

$$
\Irm:\Perv([T/W])\rightarrow\Perv([\Gl_n/\Gl_n]),\hspace{1cm}\Irm':\Perv([T'/W])\rightarrow\Perv([\Sl_n/\Sl_n]).$$
Now the stabiliser of $\mathcal{E}_{s,\psi}$ in $W$ is $W_L$ and so the perverse sheaf 

$$
\Irm\left((\pi_L)_*\mathcal{E}_{s,\psi}[{\rm dim}\, T]\right)
$$
is irreducible. The stabiliser of $\mathcal{E}_{\overline{s},\psi}$ in $W/W_L$ is precisely ${\rm Stab}_{W_{\overline{s}}/W_L}(\psi)=A(\mathcal{C})$, and so

$$
(\pi'_L)_*(\mathcal{E}_{\overline{s},\psi})=\bigoplus_{\chi\in\widehat{A(\mathcal{C}})}V_\chi\otimes \mathcal{E}_{\overline{s},\psi;\chi}.
$$
We thus deduce that

$$
\Irm\left((\pi_L)_*\mathcal{E}_{s,\psi}[{\rm dim}\, T]\right)|_{\Sl_n}=\bigoplus_{\chi\in\widehat{A(\mathcal{C})}}V_\chi\otimes\Irm'(\mathcal{E}_{\overline{s},\psi;\chi}[{\rm dim}\, T'])[1]
$$
We conclude by noticing that

$$
\Irm\left((\pi_L)_*\mathcal{E}_{s,\psi}[{\rm dim}\, T]\right)=\mathcal{X}_C^{\Gl_n},\hspace{1cm}\Irm'(\mathcal{E}_{\overline{s},\psi;\chi}[{\rm dim}\, T'])=\mathcal{X}_{\mathcal{C},\chi}^{\Sl_n}.
$$
\end{proof}

For $y\in A(\mathcal{C})$, let $O_y$ be the $F$-stable conjugacy class of $\Gl_n(\overline{\F}_q)$ as defined in \S \ref{orbital}. Denote by $\mathcal{X}_{O_y}^{\Gl_n}$ the $F$-equivariant character sheaf on $\Gl_n$ corresponding to $\IC^\bullet_{\overline{O}_y}$ via the Langlands correspondence over finite fields for $\Gl_n$. 
\bigskip

The following result is the dual version of Proposition \ref{proj-finite}.

\begin{prop}

$$
{\bf X}_{\mathcal{X}^{\Sl_n}_{\mathcal{C},\chi}[1]}=\frac{1}{|A(\mathcal{C})|}\sum_{y\in A(\mathcal{C})}\chi(y)\,R_{O_y}^{\Sl_n}
$$
where the characteristic function is taken with the natural $F$-equivariant structure.
\label{decomp}\end{prop}

\begin{proof} From Theorem \ref{teoremadual}, we have an action of the group $A(\mathcal{C})$ on $\mathcal{X}_C^{\Sl_n}=\mathcal{X}^{\Gl_n}_C|_{\Sl_n}$ and we denote by $\varphi_y$ the $F$-equivariant structure on $\mathcal{X}^{\Sl_n}_C$ obtained from the natural $F$-equivariant structure twisted by $y$, i.e. composed with $\tilde{\theta}(y)$ where

$$
\tilde{\theta}:A(\mathcal{C})\rightarrow{\rm Aut}\left(\mathcal{X}^{\Sl_n}_C\right).
$$
Then from the orthogonality relation in the character ring of $A(\mathcal{C})$ we have

$$
{\bf X}_{\mathcal{X}^{\Sl_n}_{\mathcal{C},\chi}[1]}=\frac{1}{|A(\mathcal{C})|}\sum_{y\in A(\mathcal{C})}\chi(y)\,{\bf X}_{\mathcal{X}^{\Sl_n}_C,\varphi_y}.
$$
Analogously to conjugacy classes in $\PGl_n$ (see \S \ref{orbital}), we conclude from the formula

$$
{\bf X}_{\mathcal{X}^{\Sl_n}_C,\varphi_y}={\bf X}_{\mathcal{X}^{\Sl_n}_{O_y}}=R_{O_y}^{\Sl_n}.
$$
\end{proof}

\section{The main result}

In this section, as in the previous ones, $F$ denotes the standard Frobenius on $\Gl_n(\overline{\F}_q)$, $\Sl_n(\overline{\F}_q)$ and $\PGl_n(\overline{\F}_q)$ that raises matrix coefficients to their $q$-th power.

\subsection{Tensor products of irreducible characters of $\mathrm{GL}_n(\mathbb{F}_q)$}

For this section we refer to \cite[\S6.8, \S 6.9]{letellier2}. Recall (see \S \ref{GL}) that an irreducible character of $\Gl_n(\F_q)$ is given by a triple $(L,\theta,\chi)$ where $L$ is an $F$-stable maximal torus of $\Gl_n$, $\theta$ is a linear character of $L^F$ and $\chi$ is an $F$-stable irreducible character of the Weyl group of $L$ (with respect to some maximally split $F$-stable maximal torus of $L$).
\bigskip

Let $(R_1,\dots,R_k)$ be a $k$-tuple of irreducible characters of $\Gl_n(\F_q)$ and, for each $i$, let $(L_i,\theta_i,\chi_i)$ be a triple defining $R_i$.
\bigskip

\begin{definizione}
We say that $(R_1,\dots,R_k)$ is \emph{generic} if the two following conditions are satisfied :
\bigskip

(1) \begin{equation}\prod_{i=1}^k\theta_i|_{(Z_G)^F}=1\label{gen1}\end{equation}where for a group $G$, we denote by $Z_G$ the center of $G$.

(2) 
$$
\prod_{i=1}^k({^{g_i}}\theta_i)|_{(Z_M)^F} \neq 1
$$
for any proper $F$-stable Levi subgroup $M \subsetneq \Gl_n$ and for any $g_i\in\Gl_n(\F_q)$ such that $Z_M\subset g_iL_ig_i^{-1}$.
\end{definizione}
\bigskip

Notice that the genericity condition is only on the linear characters $\theta_i$ and not on the characters $\chi_i$.
\bigskip

Let $(C_1,\dots,C_k)$ be the $k$-tuple of $F$-stable conjugacy classes of $\Gl_n$ that corresponds to $(R_1,\dots,R_k)$ under the correspondence $\mathfrak{c}_{\Gl_n}$ explained in \S \ref{GL}, i.e. $R_i=R^{\Gl_n}_{C_i}$.

\bigskip

\begin{prop}
\label{generic-from-ktuples}If $(C_1,\dots, C_k)$ is generic then so is $(R_1,\dots,R_k)$.
\end{prop}

\begin{proof}
We can assume that $C_1,\dots,C_k$ are semisimple. Fix elements $s_1 \in C_1^F,\dots,s_k \in C_k^F$ corresponding respectively to the linear characters $\theta_1,\dots,\theta_k$. Then for all $i$ we have $L_i=C_{\Gl_n}(s_i)$.  
\bigskip

Let $L \subseteq \Gl_n$ be a proper $F$-stable Levi subgroup of $\Gl_n$ such that $g_iL_ig_i^{-1} \supseteq Z_L$ for some $g_1,\dots,g_k\in\Gl_n(\F_q)$. For each $i$, fix an $F$-stable maximal torus $T_i \subseteq L_i$ such that $Z_L \subseteq g_iT_ig_i^{-1} \subseteq g_iL_ig_i^{-1}$. We have $s_i \in T_i$ for each $i$, since $s_i \in Z_{L_i}.$

Moreover, since $L$ is the centralizer of $Z_L$ inside $\Gl_n$, we have that $g^{-1}_iT_ig_i \subseteq L$ for each $i$. We deduce that $g_i^{-1}s_ig_i \in L$ for each $i$. 

From the genericity condition, we deduce that $$\prod_{i=1}^k g_i^{-1}s_ig_i \not \in [L^F,L^F] $$
and so, from Proposition \ref{propcentertolevi}, we have
$$\prod_{i=1}^k({^{g_i}}\theta_i)|_{(Z_L)^F} \neq 1.$$
\end{proof}
\bigskip

We have the following result.

\begin{teorema}
\label{non-vanishing-theorem}The condition (\ref{gen1})  is a necessary condition to have

$$
\left\langle R_1\otimes\cdots\otimes R_k,1\right\rangle_{\Gl_n}\neq 0.$$
\label{nec-cond}\end{teorema}

\begin{proof}
From the proof of \cite[Theorem 4.3.1]{HA} and, in particular, \cite[Formula (4.3.1)]{HA}, we see that, if $\left\langle R_1\otimes\cdots\otimes R_k,1\right\rangle_{\Gl_n}\neq 0$, there must exist an $F$-stable $L \subseteq \Gl_n$ and $g_1,\dots,g_k \in \Gl_n(\F_q)$ such that $g_iZ_Lg_i^{-1} \subseteq L_i$ for each $i=1,\dots,k$ and $$\sum_{z \in ((Z_L)_{reg})^F}\prod_{i=1}^k({^{g_i}}\theta_i)(z) \neq 0 .$$ 

 The proof of \cite[Proposition 4.2.1]{HA} shows that \begin{equation}\label{formula-non-vanishing}\sum_{z \in ((Z_L)_{reg})^F}\Gamma(z)=\sum_{\substack{H \supseteq L}}\mu_H \sum_{z \in (Z_H)^F}\theta_{(R_1,\dots,R_k)}(z) ,\end{equation} where the sum is over the $F$-stable Levi subgroup $H \supseteq L$ and $\mu_H$ are certain integers.

In particular, Formula (\ref{formula-non-vanishing}) shows that if $\displaystyle \sum_{z \in ((Z_L)_{reg})^F}\theta_{(R_1,\dots,R_k)}(z) \neq 0$, there must exist $H \supseteq L$ such that  \begin{equation}\label{formula-non-vanishing2}
\displaystyle \sum_{z \in (Z_H)^F}\theta_{(R_1,\dots,R_k)}(z) \neq 0\end{equation}

Since $(Z_H)^F$ is a finite abelian group, Formula (\ref{formula-non-vanishing2}) is equivalent to ask for $(\theta_{(R_1,\dots,R_k)})|_{(Z_H)^F}=1$. Notice that $Z_{\Gl_n}^F \subseteq (Z_H)^F$ for any $F$-stable Levi subgroup $H$ and thus $\Gamma|_{(Z_{\Gl_n})^F}=1$. Theorem \ref{non-vanishing-theorem} is thus a consequence of the fact that, for any $z \in (Z_{\Gl_n})^F$, we have $$(\theta_{(R_1,\dots,R_k)})(z)=\prod_{i=1}^k({^{g_i}}\theta_i)(z)=\prod_{i=1}^k \theta(z) .$$
\end{proof}

\bigskip

For each $i$, let $\omega_i$ be the type of the conjugacy class $C_i$ and put $\bm{\omega}:=(\omega_1,\dots,\omega_k)$.
\bigskip

\begin{teorema}\cite[Theorem 6.10.1]{letellier2}\label{generic-multiplicities-gln} If $(R_1,\dots,R_k)$ is generic then

$$
\left\langle R_1\otimes\cdots\otimes R_k,1\right\rangle_{\Gl_n}=\mathbb{H}_{{\bm \omega}}(0,\sqrt{q}).
$$
\label{letmain}\end{teorema}

\subsection{Tensor products of character-sheaves on $\mathrm{SL_n}$}
\label{section-sln}

We assume that $n\mid q-1$.
\bigskip

Choose a $k$-tuple $\left((\mathcal{C}_1,\chi_1),\dots,(\mathcal{C}_k,\chi_k)\right)$ of pairs consisting of a split  $F$-stable conjugacy class $\mathcal{C}_i$ of $\PGl_n$ and an irreducible character $\chi_i$ of $A(\mathcal{C}_i)$.

We wish to study the inner product

$$
P:=\left\langle {\bf X}_{\mathcal{X}_{\mathcal{C}_1,\chi_1}^{\Sl_n}}\cdots{\bf X}_{\mathcal{X}_{\mathcal{C}_k,\chi_k}^{\Sl_n}},1\right\rangle_{\Sl_n}.
$$
For each $i=1,\dots,k$ let $C_i$ be an $F$-stable conjugacy class of $\Gl_n$ with eigenvalues in $\F_q^\times$ above $\mathcal{C}_i$ and for $y\in A(\mathcal{C}_i)$, let $O_{i,y}$ be an $F$-stable conjugacy class of $\Gl_n(\overline{\F}_q)$ defined from $(C_i,y)$ as $O_y$ were defined from $(C,y)$ in \S \ref{important}. If $y=1$ then we can choose  $O_{i,y}:=C_i$. 

From Proposition \ref{decomp}

$$
P=\frac{1}{|A(\bm{\mathcal{C}})|}\sum_{(y_1,\dots,y_k)\in A(\bm{\mathcal{C}})}\chi_1(y_1)\cdots\chi_k(y_k)\left\langle R^{\Sl_n}_{O_{1,y_1}}\otimes\cdots\otimes R^{\Sl_n}_{O_{k,y_k}},1\right\rangle_{\Sl_n}.
$$
By Frobenius reciprocity we have

\begin{equation}
\label{frobenius-reciprocity}
P=\frac{1}{|A(\bm{\mathcal{C}})|}\sum_{(y_1,\dots,y_k)\in A(\bm{\mathcal{C}})}\chi_1(y_1)\cdots\chi_k(y_k)\left\langle R_{O_{1,y_1}}^{\Gl_n}\otimes\cdots\otimes R_{O_{k,y_k}}^{\Gl_n} ,{\rm Ind}_{\Sl_n(\F_q)}^{\Gl_n(\F_q)}(1)\right\rangle_{\Gl_n}.
\end{equation}
Since 
$$
{\rm Ind}_{\Sl_n(\F_q)}^{\Gl_n(\F_q)}(1)=\sum_{\alpha\in\widehat{\F_q^\times}}\alpha\circ\det$$
we are reduced to study the multiplicities

$$
\left\langle R_{O_{1,y_1}}^{\Gl_n}\otimes\cdots\otimes R_{O_{k,y_k}}^{\Gl_n}\otimes(\alpha^{-1}\circ\det),1\right\rangle_{\Gl_n}$$
for linear characters $\alpha$ of $\F_q^\times$ and $(y_1,\dots,y_k)\in A(\bm{\mathcal{C}})$.
\bigskip

By Theorem \ref{nec-cond} the above multiplicitiy vanishes unless

\begin{equation}
\theta_{(R_{O_{1,y_1}}^{\Gl_n},\dots,R_{O_{k,y_k}}^{\Gl_n})}=\alpha^n.\label{root}\end{equation}

\bigskip

\begin{remark}For a linear character $\delta\in\widehat{\F_ q^\times}$ the equation 
$$
\delta=\alpha^n
$$
has a solution $\alpha\in\widehat{\F_q^\times}$ if and only if $\delta(\xi)=1$ where $\xi$ is a primitive $n$-th root of unity in $\F_q^\times$. Indeed, the morphism 

$$
\widehat{\F_q^\times}\longrightarrow\bm\mu_n\subset \overline{\Q}_\ell^\times,\,\,\chi\mapsto\chi(\xi)
$$
is surjective with kernel $\{\alpha^n\,|\, \alpha\in\widehat{\F_q^\times}\}$.
\end{remark}

We assume from now on that 

$$
\theta_{(R_{C_1}^{\Gl_n},\dots,R_{C_k}^{\Gl_n})}=(\lambda_R)^n
$$
for some linear character $\lambda_R$ of $\F_q^\times$.

\begin{oss}
\label{remark-nthpower}
Notice that, thanks to Proposition \ref{propcentertolevi} this is equivalent to ask that \begin{equation}\det(C_1) \cdots \det(C_k)=(\lambda_C)^n \end{equation} for some $\lambda_C \in \F_q^*$, which was the hypothesis under which we worked in \cref{paragraph-cohomology}.
\end{oss}

\bigskip

\begin{lemma}The equation (\ref{root}) has a solution $\alpha=\lambda_R^y\in\widehat{\F_q^\times}$ if and only if $y\in H(\bm{\mathcal{C}})$.
\end{lemma}
\bigskip

\begin{proof}
Thanks to Remark \ref{remark-nthpower} above, it is enough to show that there exists $\lambda^y_C \in \F_q^*$ such that $$\det(O_{1,y_1}) \cdots \det(O_{k,y_k})=(\lambda^y_{\bm C})^n $$ if and only if $y \in H(\bm{\mathcal{C}})$. For each $i$, let $\alpha_i$ such that $F(\alpha_i)=y_i \alpha_i$, as in \cref{important}. 

We have thus $$\det(O_{1,y_1}) \cdots \det(O_{k,y_k})=(\alpha_1\cdots \alpha_k)^n \det(C_1) \cdots \det(C_k).$$ Since $\det(C_1) \cdots \det(C_k)$ is an $n$-th power in $\F_q^\times$ and $\bm\mu_n \subseteq \F_q^\times$, we deduce that $$\det(O_{1,y_1}) \cdots \det(O_{k,y_k})$$ is an $n$-th power in $\F_q^\times$ if and only if $\alpha_1 \cdots \alpha_k \in \F_q^\times$.

Since $$F(\alpha_1 \cdots \alpha_k)=(y_1 \cdots y_k)\alpha_1 \cdots \alpha_k,$$ we have that $\alpha_1 \cdots \alpha_k \in \F_q^\times$ if and only if $y_1 \cdots y_k=1$, i.e. if and only if $y \in H(\bm{\mathcal{C}})$.
\end{proof}
\bigskip

\begin{prop}
\label{p2}Assume that the $k$-tuple $\bm{\mathcal{C}}:=(\mathcal{C}_1,\dots,\mathcal{C}_k)$ is generic (see Definition \ref{definitiongenericity}). Then for any $y\in H(\bm{\mathcal{C}})$ and any $\alpha\in\widehat{\F_q^\times}$ such that $\alpha^n=1$, the $(k+1)$-tuple

$$
\left(R_{O_{1,y_1}}^{\Gl_n},\dots,R_{O_{k,y_k}}^{\Gl_n},((\lambda_R^y)^{-1}\alpha)\circ\det\right)
$$
of irreducible characters of $\Gl_n(\F_q)$ is generic of type $\bm{\omega}_{o(y)}$.
\end{prop}

\begin{proof}
By Proposition \ref{generic-from-ktuples}, it is enough to show that the $(k+1)$-tuple of conjugacy classes $(O_{1,y_1},\dots,O_{k,y_k},(\lambda_{\bm C}^y)^{-1}\zeta I_n)$ is generic for every $y \in H(\bm{\mathcal{C}})$ and $\zeta \in \bm\mu_n$, i.e. that the $k$-tuple $$(C_1,\dots,C_k,(\lambda_{\bm C}^y)^{-1}\alpha_1\cdots \alpha_k\zeta I_n)$$ is generic for every $\zeta \in \bm\mu_n$.

Notice that $$(\lambda_{\bm C}^y)^n=\lambda_{\bm C}^n (\alpha_1 \cdots \alpha_k)^n$$ and thus we have that $(\lambda_{\bm C}^y)^{-1}\alpha_1\cdots \alpha_k$ is an $n$-th root of unity. We deduce that $$(O_{1,y_1},\dots,O_{k,y_k},(\lambda_{\bm C}^y)^{-1}\zeta I_n)$$ is generic from every $\zeta \in \bm\mu_n$ from Definition \ref{definitiongenericity}.
\end{proof}

\bigskip

Notice that the type of the above $(k+1)$-tuple does not depend on $\alpha$ such that $\alpha^n=1$. Hence if $\bm{\mathcal{C}}$ is generic, by Theorem \ref{letmain}, the multiplicities

$$
\left\langle R_{O_{1,y_1}}^{\Gl_n}\otimes\cdots\otimes R_{O_{k,y_k}}^{\Gl_n}\otimes((\lambda_R^y)^{-1}\alpha)\circ\det),1\right\rangle_{\Gl_n}$$
are independent from the character $\alpha$ such that $\alpha^n=1$.
\bigskip

We thus have the following formula.

\begin{prop}We have
\label{p3}
$$
P=\frac{\iota(\bm{\mathcal{C}})}{|H(\bm{\mathcal{C}})|}\sum_{(y_1,\dots,y_k)\in H(\bm{\mathcal{C}})}\chi_1(y_1)\cdots\chi_k(y_k)\left\langle R_{O_{1,y_1}}^{\Gl_n}\otimes\cdots\otimes R_{O_{k,y_k}}^{\Gl_n}\otimes ((\lambda_R^y)^{-1}\circ\det),1\right\rangle_{\Gl_n}.
$$

\end{prop}

Notice that $|A(\bm{\mathcal{C}})|=|H(\bm{\mathcal{C}})||H'(\bm{\mathcal{C}})|$ and $n=|H'(\bm{\mathcal{C}})| \iota(\bm{\mathcal{C}})$. 

\begin{teorema}
    \label{theoremmultiplicitiessln}
For any generic $k$-tuple $\bm{\mathcal{C}}$ of conjugacy classes of $\PGl_n$ and any $\chi \in \widehat{A(\bm{\mathcal{C}})}$, we have
\begin{equation}\label{tensor}
\left\langle {\bf X}_{\mathcal{X}_{\mathcal{C}_1,\chi_1}^{\Sl_n}}\cdots{\bf X}_{\mathcal{X}_{\mathcal{C}_k,\chi_k}^{\Sl_n}},1\right\rangle_{\Sl_n}=\dfrac{\iota(\bm{\mathcal{C}})}{|A(\bm{\mathcal{C}})|}\sum_{r \in R_{d_1,\dots,d_k}}\Delta_{r}^{s_{\chi}}\,\mathbb{H}_{\bm \omega_r}\left(0,\sqrt{q}\right).     
\end{equation}
\end{teorema}

\begin{proof}From Proposition \ref{p3}, Proposition \ref{p2} and Theorem \ref{generic-multiplicities-gln}, we have 

$$
P=\frac{\iota(\bm{\mathcal{C}})}{|H(\bm{\mathcal{C}})|}\sum_{y=(y_1,\dots,y_k)\in H(\bm{\mathcal{C}})}\chi_1(y_1)\cdots\chi_k(y_k)\,\mathbb{H}_{\bm{\omega}_{o(y)}}(0,\sqrt{q}).
$$
We compute this formula to get Formula (\ref{tensor}) as in the proof of Theorem \ref{E-polynomial-general}.

\end{proof}

\subsection{Convolution of orbital complexes on $\mathrm{GL}_n$}

In this section, $K$ is either $\C$ or $\overline{\F}_q$. Fix a multitype $\bm \omega \in (\mathbb{T}_n^{\circ})^k$ and let $\bm C$ be a generic $k$-tuple of conjugacy classes of multitype $\bm \omega$. We denote by $\mathcal{M}_{\overline{\bm C}}(K)$ the corresponding character stack.

\bigskip

The results of \cite[Theorem 4.10]{L} imply that we have an isomorphism \begin{equation}
    \label{isom-conv-gln}
    i_{\overline{\bm C}}^*\left(\IC^{\bullet}_{\overline{C}_1}\boxtimes \cdots \boxtimes \IC^{\bullet}_{\overline{C}_k}\right) \cong \IC^{\bullet}_{X_{\overline{\bm C}}}.
\end{equation}

If $K=\overline{\F}_q$, then the eigenvalues of the conjugacy classes $C_1,\dots,C_k$ are all in $\mathbb{F}_q$ and

\begin{align*}(q-1)\left\langle {\bf X}_{\IC^{\bullet}_{\overline{C}_1}} \ast \cdots \ast {\bf X}_{\IC^{\bullet}_{\overline{C}_k}}, 1_{1} \right\rangle_{\Gl_n(\F_q)}&=\dfrac{1}{|\PGl_n(\F_q)|}\sum_{\substack{(x_1,\dots,x_k) \in C_1^F \times \cdots \times C^F_k\\x_1 \cdots x_k=1}}{\bf X}_{\IC^{\bullet}_{\overline{C}_1}}(x_1)\cdots {\bf X}_{\IC^{\bullet}_{\overline{C}_k}}(x_k)\\
&=\sum_{x\in X_{\overline{\bm C}}(\F_q)}{\bf X}_{\IC^{\bullet}_{X_{\overline{\bm C}}}}(x),\end{align*} where the last equality is a consequence of Formula (\ref{isom-conv-gln}) above. 

From Theorem \ref{countingfq} and  Theorem \ref{E-polynomial-GLn},  we deduce the following result.

\begin{teorema}\cite[Theorem 4.14]{L} If $K=\mathbb{C}$, by abuse of notation, we still denote by $(C_1,\dots,C_k)$ a generic $k$-tuple of conjugacy classes of $\Gl_n(\overline{\F}_q)$ of type $\bm{\omega}$. 

For $K=\mathbb{C}$ or $K=\overline{\F}_q$ we have
\begin{align*}
(q-1)\left\langle {\bf X}_{\IC^{\bullet}_{\overline{C}_1}} \ast \cdots \ast {\bf X}_{\IC^{\bullet}_{\overline{C}_k}}, 1_{1} \right\rangle_{\Gl_n(\F_q)}&=q^{d_{\bm \omega}}\mathbb{H}_{\bm \omega} \left(\sqrt{q},\dfrac{1}{\sqrt{q}}\right)\\&=IE(\mathcal{M}_{\overline{\bm C}}(K);q). 
\end{align*}

\label{theorem-conv-gln}
\end{teorema}

\bigskip
\subsection{Picture for $\mathrm{GL}_n$}
\label{picture-GLn}
Let $\bm{C}=(C_1,\dots,C_k)$ be a generic $k$-tuple of conjugacy classes of $\Gl_n$ of type $\bm{\omega}\in(\mathbb{T}^o_n)^k$. 
\bigskip

We can put together the results of Theorem \ref{theorem-conv-gln}, \ref{generic-multiplicities-gln} and Conjecture \ref{mixed-polynomial-Gln} in the following diagram

\begin{center}
\begin{tikzcd}
\label{diag2}
& &  IH_c(\mathcal{M}_{\overline{\bm C}};q,t)\ar[dd,"t\mapsto -1"'] &(qt^2)^{d_{\bm \omega}}\mathbb{H}_{\bm \omega}\left(-t\sqrt{q},\dfrac{1}{\sqrt{q}}\right)\arrow[dd, "\text{"Pure part"}"]\arrow[ddl, "t\mapsto-1"']\arrow[l, equal, "\text{Conjecture \ref{mixed-polynomial-Gln}}"']\\
&&&\\
& &  (q-1)\left\langle {\bf X}_{\IC^{\bullet}_{\overline{C}_1}} \ast \cdots \ast {\bf X}_{\IC^{\bullet}_{\overline{C}_k}},1_1 \right\rangle_{\Gl_n(\F_q)} &  q^{d_{\bm \omega}} \left\langle R^{\Gl_n}_{C_1} \otimes \cdots \otimes R^{\Gl_n}_{C_k},1 \right\rangle_{\Gl_n(\F_q)}
\end{tikzcd}
\end{center}

 \bigskip

\subsection{Convolution of orbital complexes on $\mathrm{PGL}_n$}

We assume that $n\mid q-1$.
\bigskip

Choose first a generic $k$-tuple $\bm{\mathcal{C}}=(\mathcal{C}_1,\dots,\mathcal{C}_k)$ of $F$-stable split conjugacy classes of $\PGl_n(\overline{\F}_q)$  and a character $\chi \in \widehat{A(\bm{\mathcal{C}})}$. 

We want to understand the quantity $$Q\coloneqq \left\langle {\bf X}_{{\IC^{\bullet}_{\overline{\mathcal{C}}_1,\mathcal{L}_{\chi_1}}}} \ast \cdots \ast {\bf X}_{{\IC^{\bullet}_{\overline{\mathcal{C}}_k,\mathcal{L}_{\chi_k}}}},1_{1} \right\rangle_{\PGl_n(\F_q)}$$
with $\mathcal{L}_\chi:=\mathcal{L}^\mathcal{C}_\chi$ defined by Formula (\ref{pIC}).

Notice that we have \begin{align*}
Q&=\dfrac{1}{|\PGl_n(\F_q)|}\sum_{\substack{(x_1,\dots,x_k) \in \mathcal{C}_1^F \times \cdots \times\mathcal{C}_k^F \\ x_1 \cdots x_k=1}}{\bf X}_{{\IC^{\bullet}_{\overline{\mathcal{C}}_1,\mathcal{L}_{\chi_1}}}}(x_1)\cdots {\bf X}_{{\IC^{\bullet}_{\overline{\mathcal{C}}_k,\mathcal{L}_{\chi_k}}}}(x_k)\\&=\dfrac{1}{|\PGl_n(\F_q)|}\sum_{x=(x_1,\dots,x_k) \in X_{\overline{\mathcal{C}}}(\F_q)}{\bf X}_{\IC^{\bullet}_{X_{\overline{\mathcal{C}},\mathcal{E}^{\bm{\mathcal{C}}}_{\chi}}}}(x)   
\end{align*}
where the last equality is a consequence of Lemma \ref{restriction}. 

From Deligne-Grothendieck's trace formula and the isomorphism (\ref{decomposition-cohomology}), we have thus
\begin{align*}
Q&=\sum_i(-1)^i \tr\left(F\,|\,IH^i_c(\mathcal{M}_{\overline{\bm{\mathcal{C}}}},\mathcal{E}^{\bm{\mathcal{C}}}_{\chi})\right)\\
&=\sum_i \dfrac{(-1)^i}{|H(\bm{\mathcal{C}})|}\sum_{y \in H(\bm{\mathcal{C}})}\sum_{\zeta \in I(\bm{\mathcal{C}})}\tr\left(yF\mid IH^i_c(\mathcal{M}_{\overline{\bm C(\zeta)}})\right).    
\end{align*}

The proof of Theorem \ref{theoremtwistedEFq} shows that, for any $y \in H(\bm{\mathcal{C}})$ and any $\zeta \in I(\bm{\mathcal{C}})$, we have

\begin{equation}
\sum_i (-1)^i \tr\left(yF|IH^i_c(\mathcal{M}_{\overline{\bm C(\zeta)}})\right)=IE^y\left(\mathcal{M}_{\overline{\bm C(\zeta)}};q\right).
\end{equation}

We deduce thus that $$Q=\dfrac{1}{|H(\bm{\mathcal{C}})|}\sum_{\zeta \in I(\bm{\mathcal{C}})}\sum_{y \in H(\bm{\mathcal{C}})}IE^y(\mathcal{M}_{\overline{\bm C(\zeta)}};q)=IE(\mathcal{M}_{\overline{\bm{\mathcal{C}}}},\mathcal{E}^{\bm{\mathcal{C}}}_{\chi};q) .$$
\bigskip

From Theorem \ref{E-polynomial-general}, we deduce the following Theorem.

\begin{teorema}
\label{theorem-diagram}
For any generic  $k$-tuple $\bm{\mathcal{C}}$ and $\chi \in \widehat{A(\bm{\mathcal{C}})}$, we have
\begin{equation}
\left\langle {\bf X}_{{\IC^{\bullet}_{\overline{\mathcal{C}}_1,\mathcal{L}_{\chi_1}}}} \ast \cdots \ast {\bf X}_{{\IC^{\bullet}_{\overline{\mathcal{C}}_k,\mathcal{L}_{\chi_k}}}},1_{1} \right\rangle_{\PGl_n(\F_q)}=\dfrac{q^{d_{\bm \omega}}\iota(\bm{\mathcal{C}})}{|A(\bm{\mathcal{C}})|}\sum_{r \in R_{d_1,\dots,d_k}}\Delta_{r}^{s_{\chi}}\,\mathbb{H}_{\bm \omega_r}\left(\sqrt{q},\dfrac{1}{\sqrt{q}}\right).  
\label{equation-convolution}
\end{equation}
\end{teorema}

\bigskip

\begin{oss}
\label{remark-convolution}
It is possible to give another proof of Theorem \ref{theorem-diagram} above. Using Formula (\ref{proj-finite}), in a dual way to what we did for the computation of tensor product of character sheaves in Formula (\ref{frobenius-reciprocity}), we can  express the quantity $$\left\langle {\bf X}_{{\IC^{\bullet}_{\overline{\mathcal{C}}_1,\mathcal{L}_{\chi_1}}}} \ast \cdots \ast {\bf X}_{{\IC^{\bullet}_{\overline{\mathcal{C}}_k,\mathcal{L}_{\chi_k}}}},1_{1} \right\rangle_{\PGl_n(\F_q)}$$ in terms of the quantities $$\left\langle {\bf X}_{{\IC^{\bullet}_{\overline{O}_{1,y_1}}}} \ast \cdots \ast {\bf X}_{{\IC^{\bullet}_{\overline{O}_{k,y_k}}}},1_{1} \right\rangle_{\Gl_n(\F_q)}$$ for $(y_1,\dots,y_k) \in H(\bm{\mathcal{C}})$.

Formula (\ref{equation-convolution}) can then be obtained as a consequence of Theorem \ref{theorem-conv-gln} above.

\end{oss}

\subsection{Main result}

We can summarize the results of Theorem \ref{theorem-diagram}, \ref{theoremmultiplicitiessln} and Conjecture \ref{IH-conjecture-thm} in the following conjectural diagram, relating the cohomology of the complex character stack $\mathcal{M}_{\overline{\bm{\mathcal{C}}}}$ to the structure coefficients of the two rings $(\mathcal{C}(\PGl_n(\F_q)),\ast)$ and $(\mathcal{C}(\Sl_n(\F_q)),\otimes)$.

\begin{center}
\begin{tikzcd}
\label{diag1}
& & IH_c\left(\mathcal{M}_{\overline{\bm{\mathcal{C}}}},\mathcal{E}^{\bm{\mathcal{C}}}_{\chi};q,t\right)\ar[dd,"t\mapsto-1"']&\text{RHS Formula (\ref{IHconjecture})}\arrow[dd, "\text{ "pure part"}"]\arrow[ddl,"t \mapsto -1"']\arrow[l, equal, "\text{Conjecture \ref{IH-conjecture-thm}}"']\\
&&&\\
& & \left\langle {\bf X}_{{\IC^{\bullet}_{\overline{\mathcal{C}}_1,\mathcal{L}_{\chi_1}}}} \ast \cdots \ast {\bf X}_{{\IC^{\bullet}_{\overline{\mathcal{C}}_k,\mathcal{L}_{\chi_k}}}},1_{1} \right\rangle_{\PGl_n(\F_q)} &  q^{d_{\bm \omega}}\left\langle {\bf X}_{\mathcal{X}_{\mathcal{C}_1,\chi_1}^{\Sl_n}}\cdots{\bf X}_{\mathcal{X}_{\mathcal{C}_k,\chi_k}^{\Sl_n}},1\right\rangle_{\Sl_n(\F_q)}
\end{tikzcd}
\end{center}
\bigskip

Recall that Conjecture \ref{IHconjecture} reduces to the conjectural formulas for the mixed Poincar\'e polynomials for $\Gl_n$-character varieties (see Formula (\ref{inversion-formula-cohomology}). In the $\Gl_n$ case we have many evidences for the conjectural formula for mixed Hodge polynomials.

\section{The case of $n=2$}

In this section we give a concrete description of our results in the case of $n=2$, i.e. for the dual pair $(\PGl_2,\Sl_2)$. The interesting cases is when conjugacy classes have a non-connected stabilizer. For $\PGl_2$ this happens only for semisimple regular conjugacy classes. We will thus treat the case of semisimple monodromies only. 


\subsection{Mixed Poincaré polynomials of generic $\mathrm{GL}_2$-character varieties}

Fix $k \in \N$. For any $0 \leq r \leq k$ put $$
\mathbb{A}_r(z,w):=\begin{cases}\frac{(w^2+1)^{k-r}(1-w^2)^r}{(z^2-w^2)(1-w^4)}+\frac{(1-z^2)^r(z^2+1)^{k-r}}{(z^4-1)(z^2-w^2)}&\text{ if }0<r\leq k,\\
&\\
\frac{(w^2+1)^k}{(z^2-w^2)(1-w^4)}+\frac{(z^2+1)^k}{(z^4-1)(z^2-w^2)}-\frac{2^{k-1}}{(z^2-1)(1-w^2)}&\text{ if }r=0.
\end{cases}
$$

Say that a type $\omega \in \mathbb{T}_2$ is semisimple if $\omega=(1,(1))(1,(1))$ or $\omega=(2,(1))$. Notice that for any semisimple type $\omega$, we have that $\omega'=\omega.$ Say that a multitype $\bm \omega=(\omega_1,\dots,\omega_k)\in \mathbb{T}_2^k$ is semisimple if $\omega_i$ is semisimple for each $i$. A direct computation shows the following.

\begin{lemma}
\label{lemma-combinatorial-2}
For any semisimple multi-type $\bm \omega \in \mathbb{T}_2^k$, we have \begin{equation}
    \mathbb{H}_{\bm \omega}(z,w)=(-1)^r\mathbb{A}_r(z,w),
\end{equation}    
where $r=\#\{i \in \{1,\dots,k\} \ | \ \omega_i=(2,(1))\}$.
\end{lemma}

\bigskip

As a consequence of Lemma \ref{lemma-combinatorial-2}, for $n=2$, Conjecture \ref{mixed-polynomial-Gln} for $\Gl_2$-character varieties has the following expression. 
\begin{conjecture}
\label{conjn=2}
If $\bm C$ is a generic $k$-tuple of regular semisimple conjugacy classes of $\Gl_2$, we have $$
H_c(\mathcal{M}_{\bm C};q,t)=(qt^2)^{k-3}\mathbb{A}_0\left(-t\sqrt{q},\dfrac{1}{\sqrt{q}}\right).
$$

\end{conjecture}

\subsection{Local systems on $\mathrm{PGL}_2$-conjugacy classes}

For $x \in K^*\setminus{1}$, let $g_x$ be the matrix 
$$
g_x=\begin{pmatrix}1 &0 \\ 0 &x \end{pmatrix}.
$$ We denote by $C_x$ the conjugacy class of $g_x$ in $\Gl_2$ and by $\mathcal{C}_x$ the conjugacy class of $p_2(g_x)$
in $\PGl_2$. We have the following:

\bigskip

$\bullet$ If $x \neq -1$,  then $A(\mathcal{C}_{x})=\{1\}$, i.e. $p_2$ restricts to an isomorphism 
$$
p_2:C_x \to \mathcal{C}_x.
$$

$\bullet$ If $x=-1$, then $A(\mathcal{C}_{-1})=\bm\mu_2$. We denote by $\mathcal{L}_{\epsilon}$ the non-trivial $\PGl_2$-equivariant local system on $\mathcal{C}_{-1}$ associated with the non-trivial character $\epsilon$ of $\bm\mu_2$. We thus have, for the $2$-covering 
$$
p_2:C_{-1} \to \mathcal{C}_{-1},
$$
the decomposition
$$
(p_2)_*(\kappa) \cong \kappa \oplus \mathcal{L}_\epsilon.
$$

\subsection{Cohomology of $\mathrm{PGL}_2$-character stacks}

Fix a $k$-tuple $\bm{\mathcal{C}}$ of regular semisimple conjugacy classes of $\PGl_2$, a $k$-tuple $\bm C$ and $\lambda_{\bm C}$ as in \cref{section-geometry}. Notice that $\overline{\mathcal{C}_i}=\mathcal{C}_i$ and $\overline{C_i}=C_i$ for each $i=1,\dots,k$, since the classes are semisimple. Moreover, let $m$ be the number of degenerate conjugacy classes among $\mathcal{C}_1,\dots,\mathcal{C}_k$.  Put $$\bm{C}^+\coloneqq \bm{C}(1)= (C_1,\dots,C_k,\lambda_{\bm C}^{-1}I_2) $$ $$\bm{C}^- \coloneqq \bm{C}(-1)=(C_1,\dots,C_k,-\lambda_{\bm C}^{-1}I_2) .$$

Recall that $\bm{\mathcal{C}}$ is generic if and only if $\bm C^+,\bm C^-$ are both generic. Notice that, if $m \geq 1$, then $\bm C^+$ is generic if and only if $\bm C^-$ is generic. 

\subsection{Non-degenerate case}

Assume that $m=0$, i.e. that $\mathcal{C}_1,\dots,\mathcal{C}_k$ are all non-degenerate. With the notations of \cref{description-section}, the groups $A(\bm{\mathcal{C}}),H(\bm{\mathcal{C}}) $ and $ H'(\bm{\mathcal{C}})$ are all trivial. Proposition \ref{geometry-character-stacks} and Proposition \ref{proposition-DM} implies thus the following

\begin{prop}
The map $p:\mathcal{M}_{\bm C^+} \bigsqcup \mathcal{M}_{\bm C^-} \to \mathcal{M}_{\bm{\mathcal{C}}}$ is an isomorphism. In particular, if $\mathcal{C}$ is generic, the character stack $\mathcal{M}_{\mathcal{C}}$ is a smooth algebraic variety of dimension $2k-6$ with $2$ connected components.    
\end{prop}

In particular, in this case, Conjecture \ref{conj-nondegenerate} and Theorem \ref{theorem-nondegenerate} reads as follows.

\begin{teorema}
If $\bm{\mathcal{C}}$ is generic and non-degenerate, we have
$$E(\mathcal{M}_{\bm{\mathcal{C}}};q)=2q^{k-3}\mathbb{A}_0\left(\sqrt{q},\dfrac{1}{\sqrt{q}}\right). $$
\end{teorema}

\begin{conjecture}
If $\bm{\mathcal{C}}$ is generic and non-degenerate, we have
$$H_c(\mathcal{M}_{\bm{\mathcal{C}}};q,t)=2(qt^2)^{k-3}\mathbb{A}_0\left(-t\sqrt{q},\dfrac{1}{\sqrt{q}}\right). $$    
\end{conjecture}

\subsection{Degenerate case}

Assume now that $m \geq 1$. In what follows, put $H_m=\{(y_1,\dots,y_m) \in (\bm\mu_2)^m \ | \ y_1\cdots y_m=1\}$. With the notations of \cref{description-section}, we have that $$A(\bm{\mathcal{C}})=(\bm\mu_2)^m,  \hspace{12 pt} H(\bm{\mathcal{C}})=H_m, \hspace{12 pt} H'(\bm{\mathcal{C}})=\bm\mu_2 .$$

We thus have $\iota(\bm{\mathcal{C}})=1$. From Proposition \ref{proposition-DM} and Proposition \ref{geometry-character-stacks}, we have the following result.

\begin{prop}
The map $\overline{p}:\mathcal{M}_{\bm C^+} \to \mathcal{M}_{\bm{\mathcal{C}}}$ is an $H_m$-covering, i.e. it factorizes through an isomorphism $$\mathcal{M}_{\bm{\mathcal{C}}} \cong [\mathcal{M}_{\bm C^+}/H_m] .$$

In particular, if $\bm{\mathcal{C}}$ is generic, the stack $\mathcal{M}_{\bm{\mathcal{C}}}$ is a smooth and connected Deligne-Mumford stack of dimension $2k-6$. 
\end{prop}

\bigskip

For each subset $A \subseteq  \{1,\dots,m\}$, define $\chi_A \in \widehat{({\bm\mu}_2)^m}$ as 

$$\chi_A:({\bm\mu}_2)^m \to \C^\times$$ $$(y_1,\dots,y_m) \mapsto \prod_{j \in A}y_j .$$

Notice that $\widehat{({\bm\mu}_2)^{m}}=\{\chi_A\}_{A \subseteq \{1,\dots,m\}}$. For each $A \subseteq \{1,\dots,m\}$, denote by $\mathcal{E}_{A}$ the local system $\mathcal{E}^{\bm{\mathcal{C}}}_{\chi_{A}}$ on $\mathcal{M}_{\bm{\mathcal{C}}}$ (see \S \ref{localsystemsPGLncohomology}).

\bigskip

For any $m_1,m_2,l \in \N$, denote by $C_{m_1,m_2,l}$ the coefficient of $y^lx^{m_1+m_2-l}$ in the product $(x-y)^{m_1}(x+y)^{m_2}$. 
\bigskip

Theorem \ref{E-polynomial-general} and Conjecture \ref{IH-conjecture-thm} read as follows.

\begin{teorema}
\label{Epolynomialgenericdegenerate}
For any $A \subseteq \{1,\dots,m\}$, we have  \begin{equation}
\label{Eseries2}
E(\mathcal{M}_{\bm{\mathcal{C}}},\mathcal{E}_A;q)=\frac{q^{k-3}}{2^{m-1}}\sum_{\substack{l=0\\ \text{ even}}}^{m}C_{|A|,m-|A|,l}\,\,\mathbb{A}_l\left(\frac{1}{\sqrt{q}},\sqrt{q}\right).
\end{equation}

\end{teorema}

\bigskip

\begin{conjecture}
\label{conjmhslocalsystems}
For any $A \subseteq \{1,\dots,m\}$, we have   
\begin{equation}
\label{HCseries2}
H_c(\mathcal{M}_{\bm{\mathcal{C}}},\mathcal{E}_A;q,t)=\frac{(qt^2)^{k-3}}{2^{m-1}}\sum_{\substack{l=0\\l \text{ even}}}^{m}C_{|A|,m-|A|,r}\,\,\mathbb{A}_l\left(\frac{1}{\sqrt{q}},-t\sqrt{q}\right).
\end{equation}
In particular, the Poincar\'e polynomial of the pure part is given by

\begin{equation}
    PH_c(\mathcal{M}_{\bm{\mathcal{C}}},\mathcal{E}_A;q)=\frac{(qt^2)^{k-3}}{2^{m-1}}\sum_{\substack{l=0\\l \text{ even}}}^{m}C_{|A|,m-|A|,l}\,\,\mathbb{A}_l\left(0,\sqrt{q}\right).
\label{pure-conj}\end{equation}
\end{conjecture}

Let us explain how to get these formulas from Formulas (\ref{edIE-theorem}) and (\ref{IHconjecture}). 

Put

$$
L_r:=\{i \in \{1,\dots,k\}\ | \ r_i=2\},\hspace{1cm}l_r:=|L_r|.
$$
For any $r \in R_{d_1,\dots,d_k}$, we get from Lemma \ref{lemma-combinatorial-2} the following formula 
    $$\mathbb{H}_{\bm \omega_r}(z,w)=(-1)^{l_r}\mathbb{A}_{l_r}(z,w).$$

Moreover, we can rewrite $\Delta^s_r$ for any $r \in R_{d_1,\dots,d_k}$ and $s=(s_1,\dots,s_k) \in \N_{>0}^k$ as follows. Since each $r_i$ is either $1$ or $2$, for each $i$ and $j$, we have $$\phi(r_i)=\phi\left(\dfrac{r_i}{\gcd(r_i,s_i+j)}\right)=1 .$$ In particular, \begin{equation}
\label{eq1-n2}
    \Delta^s_r=\prod_{i=1}^k C_{\frac{r_i}{\gcd(r_i,s_i)}}+ \prod_{i=1}^k C_{\frac{r_i}{\gcd(r_i,s_i+1)}}.
\end{equation}

Notice that $C_2=-1$ and $C_1=1$, see Formula (\ref{cyclo}). Therefore, if $r_i=1$, we have $C_{\frac{r_i}{\gcd(r_i,s_i)}}=C_{\frac{r_i}{\gcd(r_i,s_i+1)}}=1$, and, if $r_i=2$, we have $C_{\frac{r_i}{\gcd(r_i,s_i)}}=\pm 1$ and $C_{\frac{r_i}{\gcd(r_i,s_i)}}=-C_{\frac{r_i}{\gcd(r_i,s_i+1)}}$. From Formula (\ref{eq1-n2}) we thus have 
\begin{equation}
    \Delta^s_r=(1+(-1)^{l_r})\prod_{i \in L_r}C_{\frac{r_i}{\gcd(r_i,s_i)}}.
\end{equation}

In particular, $\Delta^s_r=0$ if $l_r$ is odd. If $l_r$ is even and $s=s_{\chi_A}$ for some $A \subseteq \{1,\dots,m\}$, we have \begin{equation}
  \Delta^{s_{\chi_A}}_r=2 (-1)^{|A \cap L_r|}.  
\end{equation} 

We have thus \begin{equation}
    \sum_{r \in R_{d_1,\dots,d_k}}\Delta^{s_{\chi_A}}_r\mathbb{H}_{\bm \omega_r}(z,w)=2\sum_{\substack{r \in R_{d_1,\dots,d_k}\\ l_r \text{ even }}}\mathbb{A}_{l_r}(z,w)(-1)^{|A \cap L_r|}=
\end{equation}

\begin{equation}
=2\sum_{\substack{l=0 \\ \text{even}}}^m \mathbb{A}_l(z,w)\sum_{\substack{r \in R_{d_1,\dots,d_k}\\l_r=l}}(-1)^{|A \cap L_r|}= 2\sum_{\substack{l=0 \\ \text{even}}}^m\mathbb{A}_l(z,w)C_{|A|,m-|A|,l}.    
\end{equation}

\subsection{Langlands duality and multiplicities for $\mathrm{SL}_2(\mathbb{F}_q)$}

In this section $K=\overline{\F}_q$ and $2$ does not divide $q$. We fix an embedding $\overline{\F}_q^\times \subseteq \overline{\Q}_{\ell}^\times$ and a generator $\zeta_{q-1}$ of $\F_q^\times$.  Recall that $T \subseteq \Gl_2$ is the torus of diagonal matrices, $T'= T \cap \Sl_2 \subseteq \Sl_2$ and $\overline{T}=p_2(T)$. The Weyl group $W$ of the three maximal tori is $W=\bm \mu_2=\{1,-1\}$. Denote by $\pi':T' \to [T'/W]$ and $\overline{\pi}:\overline{T} \to [\overline{T}/W]$ the projection maps.
\bigskip

We now describe the explain the correspondence (\ref{langlandsfinite}) in the semisimple case.

\bigskip

Notice that we have  isomorphisms $\gamma:T'\to \mathbb{G}_m$ and  $\overline{\gamma}:\overline{T} \to \mathbb{G}_m$ defined as

\begin{equation}
\label{isomorphismT'T}
\gamma\begin{pmatrix}
    x &0\\
    0 &x^{-1}
\end{pmatrix}=x,      \hspace{14 pt} \text{ and }\hspace{0.5cm}\overline{\gamma}\left(p_2\begin{pmatrix}
    1 &0\\
    0 &x
\end{pmatrix}\right)=x .\end{equation} 

From which we identify $\widehat{(T')^F}=\Hom(\F_q^\times,\overline{\Q}^\times_{\ell})$ and $\overline{T}{^F}=\F_q^\times$. The isomorphism (\ref{bijection-LLD}) reads

\begin{equation}
\Psi^{-1}: \Hom(\F_q^\times,\overline{\Q}^\times_{\ell}) \to \F^\times_q,\hspace{1cm}\alpha \rightarrow \alpha(\zeta_{q-1}).
\label{iso2}\end{equation}

Notice that, through the isomorphisms $\gamma$ and $\overline{\gamma}$, the action of $W=\bm \mu_2$ on $T'$ and on $\overline{T}$ is identified with the action of $\bm \mu_2$ on $\mathbb{G}_m$ given by $$ (-1) \cdot x=x^{-1} .$$

\bigskip

 The elements of $({\rm LS}_o(\PGl_2)^F)_{\text{split}}$ are the pairs $(\mathcal{C},\zeta)$ where $\mathcal{C}$ is a conjugacy class of $\PGl_2$ and $\zeta$ an irreducible $\PGl_2$-equivariant local system on $\mathcal{C}$. The only conjugacy class of $\PGl_2$ which supports a non-trivial local system is the degenerate class $\mathcal{C}_{-1}$. 
 \bigskip

Consider the inductions

$$
{\rm I}':\Perv([T'/W])\rightarrow\Perv([\Sl_2/\Sl_2]),\hspace{1cm}\overline{{\rm I}}:\Perv([\overline{T}/W])\rightarrow\Perv([\PGl_2/\PGl_2]), 
$$
and $\overline{\rm Ind}:\Perv(\overline{T})\rightarrow\Perv([\PGl_2/\PGl_2])$ defined in \S \ref{geometric-induction}.
\bigskip

Fix a regular element $x\in \F_q^\times\backslash\{1\}\subset\overline{T}{^F}$.
 \bigskip
 
 Notice that $\overline{{\rm I}}(\overline{\pi}_*((\overline{\Q}_{\ell})_{x}))=\overline{\Ind}((\overline{\Q}_{\ell})_{x})$ and that the support of $\overline{\Ind}((\overline{\Q}_{\ell})_{x})$ (viewed as a $\PGl_2$-equivariant complex on $\PGl_2$) is the conjugacy class $\mathcal{C}_{x}$ (as it is  semisimple regular).

 Consider the cartesian diagram:
 
 \begin{equation}
\xymatrix{[\overline{B}/\overline{B}]\ar[rr]&&[\PGl_2/\PGl_2]\\
B(\overline{T})=B(C_{\PGl_2}(x)^o)\ar[u]\ar[rr]&&B(C_{\PGl_2}(x))\ar[u]}.
\end{equation}
The bottom arrow is in fact the quotient of $p_2:C_x\rightarrow\mathcal{C}_x$ by $\PGl_2$.

 The complex $\overline{{\rm I}}(\overline{\pi}_*((\overline{\Q}_{\ell})_x))$ viewed as a $\PGl_2$-equivariant complex on $\PGl_2$ is the local system $p_2(\overline{\Q}_\ell[{\rm dim}\, C_x])$ on $\mathcal{C}_x$ extended by $0$ on $\PGl_2$. Therefore
$$
\overline{{\rm I}}(\overline{\pi}_*((\overline{\Q}_{\ell})_x))=\begin{cases}\overline{\Q}_\ell[{\rm dim}\, \mathcal{C}_{-1}]\oplus\mathcal{L}_\epsilon[{\rm dim}\,\mathcal{C}_{-1}]&\text{ if }x=-1,\\
\overline{\Q}_\ell[{\rm dim}\, \mathcal{C}_x]&\text{ if }x\neq -1.\end{cases}
$$
The element $x$ corresponds, under $\Psi^{-1}$, to linear character $\alpha_x$ of $\F_q^\times\simeq T'{^F}$ and so corresponds to an $F$-stable Kummer local system $\mathcal{A}_x$ on $T'$. The local system $\mathcal{A}_{-1}$ is the non-trivial square of the trivial local system on $\mathbb{G}_m$.
 \bigskip

 As $(-1)^*(\mathcal{A}_x)\neq\mathcal{A}_x$ if $x\neq -1$, the complex ${\rm I}'(\mathcal{A}_x[{\rm dim}\, T'])$ is an irreducible perverse sheaf on $\Sl_2$.
\bigskip

As $\mathcal{A}_{-1}$ is $W$-equivariant, the perverse sheaf ${\rm I}'(\mathcal{A}_{-1}[{\rm dim}\, T'])$ decomposes as a sum of two irreducible simple perverse sheaves on $\Sl_2$
$$
{\rm I}'(\mathcal{A}_{-1}[{\rm dim}\, T'])=\mathcal{K}_1\oplus\mathcal{K}_\epsilon
$$
parametrized by the irreducible characters of $W$.
\bigskip

Therefore the correspondence (\ref{langlandsfinite}) maps the pair $(\mathcal{C}_x,\overline{\Q}_\ell)$ to ${\rm I}'(\mathcal{A}_x[{\rm dim}\, T'])$ if $x\neq -1$ and maps $(\mathcal{C}_{-1},\mathcal{L})$ to $\mathcal{K}_1$ if $\mathcal{L}=\overline{\Q}_\ell$ and to $\mathcal{K}_\epsilon$ if $\mathcal{L}=\mathcal{L}_\epsilon$.
\bigskip

In other words, 

$$
\mathcal{X}_{\mathcal{C}_{-1},1}^{\Sl_2}=\mathcal{K}_1, \hspace{1cm}\mathcal{X}_{\mathcal{C}_{-1},\epsilon}^{\Sl_2}=\mathcal{K}_\epsilon
$$
with the definition of $\mathcal{X}_{\mathcal{C},\chi}^{\Sl_2}$ given before Theorem \ref{teoremadual}.
\bigskip

Although they are not considered in this section (as we consider only semisimple regular conjugacy classes), the trivial pair $(\{1\},\overline{\Q}_\ell)$ is mapped to the constant perverse sheaf $\overline{\Q}_\ell[{\rm dim}\, \Sl_2]$ on $\Sl_2$ and maps the trivial local system on the regular unipotent conjugacy class of $\PGl_2$ to the Steinberg character-sheaf on $\Sl_2$.
\bigskip

\begin{remark}The characteristic functions of the character-sheaves $\mathcal{X}^{\Sl_2}_{\mathcal{C},\chi}$ are all (up to an explicit sign) irreducible characters of $\Sl_2(\F_q)$ except for the two pairs $(\mathcal{C}_{-1},\overline{\Q}_\ell)$ and $(\mathcal{C}_{-1},\mathcal{L}_\epsilon)$ for which we give the values below.
\end{remark}

\bigskip

Let $X_{\rm Id}={\bf X}_{\mathcal{K}_1}$ and $X_{\epsilon}={\bf X}_{\mathcal{K}_{\epsilon}}$ be the characteristic functions of $\mathcal{K}_1$ and $\mathcal{K}_\epsilon$. We use Proposition \ref{decomp} to compute their values on the conjugacy classes of $\Sl_2(\F_q)$. For notations and details concerning conjugacy classes of $\Sl_2(\F_q)$, we refer to \cite[Chapter 12.5]{DM}.

Let $\bm \mu_{q+1}=\{x \in \F_{q^2}^\times \ | \ x^{q+1}=1\}$.  Let $\alpha_{-1}:\F_q^\times\rightarrow\overline{\Q}_\ell^\times$ (resp. $\omega_{-1}:\bm \mu_{q+1}\rightarrow \overline{\Q}_\ell^\times$) be the characteristic function of $\mathcal{A}_{-1}$ with respect to the canonical $F$-equivariant structure (resp. with respect to the $F$-equivariant structure twisted by the non-trivial element of $W$). It takes the value $1$ at squares and the value $-1$ at non-squares elements of $\F_q^\times$  (resp. of $\bm \mu_{q+1}$).

\begin{scriptsize}
\begin{equation}
\label{tableSL}
\begin{array}{|c|c|c|c|c|c|c|}
\hline
&&&&&&\\
&\left(\begin{array}{cc}1&0\\0&1\end{array}\right)&\left(\begin{array}{cc}-1&0\\0&-1\end{array}\right)&\left(\begin{array}{cc}a&0\\0&a^{-1}\end{array}\right)&  \left(\begin{array}{cc}x&0\\0& x^q\end{array}\right)&\left(\begin{array}{cc}1&1\\0&1\end{array}\right)&\left(\begin{array}{cc}-1&1\\0&-1\end{array}\right)\\
&&&a\neq a^{-1}\in\F_q^\times&x^{q+1}=1, x\neq x^q&&\\
\hline
&&&&&&\\
X_{{\rm Id}}&1&q\alpha_{-1}(-1)&\alpha_{-1}(a)&0&1&0 \\
&&&&&&\\
\hline
&&&&&&\\
X_\epsilon&q&\alpha_{-1}(-1)&0&\omega_{-1}(x)&1&\alpha_{-1}(-1)\\
&&&&&&\\
\hline
\end{array}
\end{equation}
\end{scriptsize}

\bigskip

From Table (\ref{tableSL}) above and the description of the map $\mathfrak{c}_{\PGl_2}$ given above, we can check by direct computation the following result. 
\begin{teorema}
    \label{theoremmultiplicitiessl2}
Let $\bm{\mathcal{C}}=(\mathcal{C}_1,\dots,\mathcal{C}_k)$ be a generic $k$-tuple of regular semisimple conjugacy classes of $\PGl_2$. Let  $A \subseteq \{1,\dots,m\}$ and denote by  $\chi_A=((\chi_A)_1,\dots,(\chi_A)_k) \in \widehat{A(\bm{\mathcal{C}})}$ the corresponding irreducible character. We have
\begin{equation}
\left\langle {\bf X}_{\mathcal{X}_{\mathcal{C}_1,(\chi_A)_1}^{\Sl_2}}\cdots{\bf X}_{\mathcal{X}_{\mathcal{C}_k,(\chi_A)_k}^{\Sl_2}},1\right\rangle_{\Sl_2}=\frac{1}{2^{m-1}}\sum_{\substack{l=0\\l \text{ even}}}^{m}C_{|A|,m-|A|,l}\,\,\mathbb{A}_l\left(0,\sqrt{q}\right).     
\end{equation}
\end{teorema}

\end{document}